\documentclass[12pt]{amsart}

\usepackage{amssymb,amsbsy}
\usepackage{latexsym,array}
\usepackage{verbatim,color}
\usepackage{fullpage,euscript}
\usepackage{xcolor}
\usepackage{tikz}
\usetikzlibrary{arrows,shapes,trees}
\usepackage[colorlinks=true,linkcolor=blue,urlcolor=my_color,citecolor=magenta]{hyperref}

\definecolor{my_color}{rgb}{0,0.5,0.5}
\definecolor{MIXT}{rgb}{0.8,0.5,0.2}
\definecolor{mixt}{rgb}{0.5,0.3,0.2}
\definecolor{sin}{rgb}{0,0.5,0.5}
\definecolor{darkblue}{rgb}{0,0.1,0.8}
\definecolor{redi}{rgb}{0.5,0,0.4}

\numberwithin{equation}{section}

\makeatletter
\@namedef{subjclassname@2020}{\textup{2020} Mathematics Subject Classification}
\makeatother

\input {cyracc.def}

\newtheorem{thm}{Theorem}[section]
\newtheorem{prop}[thm]{Proposition}
\newtheorem{lm}[thm]{Lemma}
\newtheorem{cl}[thm]{Corollary}

\newtheorem{qtn}{Question}
\newtheorem{prob}[qtn]{Problem}

\theoremstyle{remark}
\newtheorem{rmk}[thm]{Remark}
\newtheorem{ex}[thm]{Example} 

\theoremstyle{definition}
\newtheorem{df}{Definition}

\newcommand {\ah}{{\mathfrak a}}
\newcommand {\be}{{\mathfrak b}}
\newcommand {\ce}{{\mathfrak c}}

\newcommand {\g}{{\mathfrak g}}

\newcommand {\h}{{\mathfrak h}}
\newcommand {\tth}{{\tilde\h}}

\newcommand {\me}{{\mathfrak m}}

\newcommand {\p}{{\mathfrak p}}
\newcommand {\q}{{\mathfrak q}}
\newcommand {\rr}{{\mathfrak r}}
\newcommand {\trr}{{\tilde\rr}}
\newcommand {\es}{{\mathfrak s}}
\newcommand {\te}{{\mathfrak t}}
\newcommand {\ut}{{\mathfrak u}}

\newcommand {\z}{{\mathfrak z}}

\newcommand {\glN}{{\mathfrak{gl}}_N}
\newcommand {\sln}{{\mathfrak{sl}}_n}

\newcommand {\slno}{{\mathfrak{sl}}_{n+1}}

\newcommand {\sone}{{\mathfrak{so}}_{2n}}

\newcommand{\gt}{\mathfrak}
\newcommand{\ttP}{{\tt P}}

\newcommand {\eus}{\EuScript}

\newcommand {\gA}{{\eus A}}

\newcommand {\gS}{{\eus S}}
\newcommand {\gZ}{{\eus Z}}

\newcommand {\esi}{\varepsilon}

\newcommand {\ap}{\alpha}

\newcommand {\ca}{{\mathcal A}}

\newcommand {\cF}{{\mathcal F}}

\newcommand {\cL}{{\mathcal L}}

\newcommand {\cz}{{\mathcal Z}}
\newcommand {\BA}{{\mathbb A}}
\newcommand {\BC}{{\mathbb C}}
\newcommand {\BP}{{\mathbb P}}
\newcommand {\BS}{{\mathbb S}}

\newcommand {\BZ}{{\mathbb Z}}
\newcommand {\BN}{{\mathbb N}}

\newcommand {\BQ}{{\mathbb Q}}

\newcommand {\ad}{{\mathrm{ad}}}

\newcommand {\Ann}{{\mathrm{Ann}}}

\newcommand {\codim}{{\mathrm{codim\,}}}

\newcommand {\gr}{{\mathrm{gr\,}}}

\newcommand {\ind}{{\mathrm{ind\,}}}
\newcommand {\Lie}{{\mathsf{Lie\,}}}

\newcommand {\Ima}{{\mathrm{Im\,}}}

\newcommand {\rk}{{\mathsf{rk\,}}}

\newcommand {\spe}{{\mathsf{Spec\,}}}

\newcommand {\tr}{{\mathrm{tr\,}}}
\newcommand {\trdeg}{{\mathrm{tr.deg\,}}}

\newcommand {\diag}{{\mathsf{diag}}}

\newcommand {\tri}{\mathfrak{sl}_2}

\newcommand {\bb}{{\boldsymbol{b}}}

\newcommand {\bt}{\boldsymbol{p}}
\newcommand {\fb}{\boldsymbol{f}}

\newcommand {\PC}{{\sf PC}}

\newcommand {\beq}{\begin{equation}}
\newcommand {\eeq}{\end{equation}}

\renewcommand{\le}{\leqslant}
\renewcommand{\ge}{\geqslant}
\renewcommand{\lg}{\langle}
\newcommand{\rg}{\rangle}
\newcommand{\ggs}{{\sf g.g.s.}}
\newcommand{\wrt}{{w.r.t.}}

\newcommand {\bbk}{\Bbbk}

\begin{document}
\setlength{\parskip}{3pt plus 2pt minus 0pt}
\hfill { {\color{blue}\scriptsize March 5, 2026}}
\vskip1ex

\title[Horospherical splittings]
{Horospherical splittings of $\g$ and related Poisson commutative subalgebras of $\gS(\g)$} 
\author[D.\,Panyushev]{Dmitri I. Panyushev}
\address[D.P.]{Independent University of Moscow,
119002 Moscow, Russia}
\email{panyush@mccme.ru}
\author[O.\,Yakimova]{Oksana S.~Yakimova}
\address[O.Y.]{Institut f\"ur Mathematik, Friedrich-Schiller-Universit\"at Jena,  07737 Jena,
Germany}
\email{oksana.yakimova@uni-jena.de}
\keywords{splitting, coadjoint representation, symmetric invariants}
\subjclass[2020]{17B63, 14L30, 17B08, 17B20, 22E46}
\begin{abstract}
Let a Lie algebra $\q$ be a linear sum of two complementary subalgebras $\h$ and $\rr$. We continue 
our investigations initiated in ({\it J. London Math. Soc.} {\bf 103}\,(2021), 1577--1595), where compatible
Poisson brackets associated with splitting $\q=\h\oplus\rr$ and related 
Poisson-commutative subalgebras of the symmetric algebra $\gS(\q)$ are studied. In this article, we 
further develop the general theory and study in more details splittings of the reductive Lie algebras such 
that both $\h$ and $\rr$ are solvable horospherical subalgebras. We also derive some results of the
Adler--Kostant--Symes theory using our approach.
\end{abstract}
\maketitle

\section{Introduction}

\noindent
The ground field $\bbk$ is algebraically closed and  $\mathsf{char}\,\bbk=0$. All Lie algebras
are assumed to be algebraic. 
A {\it splitting} of  a Lie algebra $\q$ is a vector space decomposition $\q=\h\oplus\rr$, where $\h$ and 
$\rr$ are Lie subalgebras. Although this setup is similar to that of the Adler--Kostant--Symes 
Theorems~\cite[4.4]{AMV}, we pursue a different path. This paper is a sequel to~\cite{bn}, where we 
defined {compatible Poisson brackets} associated with splittings (called $2$-splittings in \cite{bn}) and 
studied related {\it Poisson-commutative\/} (=\PC) subalgebras $\gZ_{\lg\h,\rr\rg}$ of the symmetric 
algebra $\gS(\q)$. Our goal is to elaborate on good cases in which $\trdeg\gZ_{\lg\h,\rr\rg}$ is maximal 
and $\gZ_{\lg\h,\rr\rg}$ is a polynomial ring. This provides a number of new completely integrable 
systems. Basic constructions and results on $\gZ_{\lg\h,\rr\rg}$ are valid for arbitrary $\q$, but substantial 
applications are obtained for reductive algebraic groups and their Lie algebras. Therefore, in the rest of 
Introduction we stick to the reductive case. 

Let $G$ be a connected reductive group with $\Lie G=\g$.
If $\gA$ is a \PC\ subalgebra of  $\gS(\g)$, then $\trdeg\gA\le \bb(\g):=\frac{1}{2}(\dim\g+\rk\g)$, 
see~\cite[0.2]{v:sc}. We say that $\gA$ is {\it large}, if $\trdeg\gA= \bb(\g)$. The {\it algebra of symmetric 
invariants\/} of $\g$, denoted $\gS(\g)^\g$ or $\cz\gS(\g)$, is  the centre of the Lie--Poisson algebra 
$(\gS(\g), \{\,\,,\,\})$. It is a graded polynomial ring in $\ell=\rk\g$ variables, which we use for 
constructing \PC\ subalgebras of $\gS(\g)$. Any set of algebraically independent homogeneous 
generators of $\gS(\g)^\g$ is called a  {\it Hilbert basis}. If $\g=\h\oplus\rr$, then there are Lie algebra
contractions of $\g$ to the semi-direct products $\g_{(0)}{:}=\h\ltimes\rr^{\rm ab}$ and 
$\g_{(\infty)}{:}=\rr\ltimes \h^{\rm ab}$, where the second factor is an abelian ideal. Here $\rr^{\rm ab}$ (resp. $\h^{\rm ab}$) is naturally regarded as  
$\h$-module $\g/\h$ (resp. $\rr$-module $\g/\rr$). Let $\{\,\,,\,\}_{0}$ and $\{\,\,,\,\}_{\infty}$ be the Lie--Poisson brackets of $\g_{(0)}$ and $\g_{(\infty)}$, respectively. Then
$\{\,\,,\,\}_{0}+\{\,\,,\,\}_{\infty}$ is the 
Lie--Poisson bracket of $\g$ (cf. Section~\ref{sect:PC}). Hence these brackets are compatible. 
The subalgebra $\gZ_{\lg\h,\rr\rg}$ is obtained from these compatible Poisson brackets via the use 
of the {\it Lenard--Magri scheme} (cf. Section~\ref{subs:LM}), and the point is to 
realise whether $\gZ_{\lg\h,\rr\rg}$  
is large and enjoys other good properties. 
 
Let $\ind\q$ denote the {\it index} of $\q$. By the standard property of Lie algebra 
contractions, $\ind\g_{(0)}\ge \ind\g=\rk\g$ and $\ind\g_{(\infty)}\ge\rk\g$.
Then the following is true. 
\begin{itemize}
   \item[\sf (i)] \  $\ind\g_{(0)}=\rk\g$ if and only if $\h$ is a {\it spherical subalgebra} of $\g$ (and likewise 
for $\g_{(\infty)}$ and $\rr$), see~\cite[Section\,2]{bn} and~\cite{T}.
  \item[\sf (ii)] \ If  $\ind\g_{(0)}=\ind\g_{(\infty)}=\rk\g$, then 
$\trdeg\gZ_{\lg\h,\rr\rg}=\bb(\g)$~\cite[Theorem\,3.2]{bn}.
  \item[\sf (iii)] \ If ${\mathcal C}$ is an arbitrary large \PC\ subalgebra of $\gS(\g)$, then it
is complete on {\bf generic} regular $G$-orbits in $\g^*$~\cite[Lemma\,1.1]{bn}.
\end{itemize}
 
A splitting of $\g$ is said to be {\it non-degenerate}, if $\ind\g_{(0)}=\ind\g_{(\infty)}=\rk\g$.
In~\cite[Sect.\,4-6]{bn}, we found several important cases with large {\bf polynomial} rings 
$\gZ_{\lg\h,\rr\rg}$. For instance,

\textbullet \ \ If $\be$ and $\be_-$ are opposite Borel subalgebras of $\g$ and $\ut_-=[\be_-,\be_-]$, then 
$\g =\be\oplus\ut_-$ is a non-degenerate splitting. Here $\gZ_{\langle\be,\ut_-\rangle}$ is also a 
{\it maximal\/} \PC\ subalgebra of $\gS(\g)$, which is complete on {\bf all} regular coadjoint orbits of $G$. 
\\ \indent
\textbullet \ \ Let $\vartheta^{\sf max}$ be an involution of $\g$ of {\it maximal rank}, i.e., its $(-1)$-eigenspace  
contains a Cartan subalgebra. If $\g_0$ is the fixed-point subalgebra of 
$\vartheta^{\sf max}$, then there is a Borel subalgebra $\be$ such that $\g=\be\oplus\g_0$. This 
splitting is non-degenerate and, at least for $\g=\sln$, $\gZ_{\langle\be,\g_0\rangle}$ is also a
{\it maximal\/} \PC\ subalgebra. 

The sum $\g=\h\oplus\rr$ provides a bi-homogeneous decomposition for every homogeneous 
$F\in\gS(\g)$. Let $F_1,\dots,F_\ell$ be a Hilbert basis of $\gS(\g)^\g$. By~\cite{bn}, $\gZ_{\lg\h,\rr\rg}$ is
generated by the bi-homogeneous components of all $\{F_j\}$ and the generators of 
$\cz\gS(\g_{(0)})$ and $\cz\gS(\g_{(\infty)})$.
Let $F^\bullet$ (resp. $F_\bullet$) denote the nonzero bi-homogeneous component of $F$ with minimal 
$\h$-degree (resp. $\rr$-degree). Then $F^\bullet\in\cz_0:=\cz\gS(\g_{(0)})$ and 
$F_\bullet\in\cz_\infty:=\cz\gS(\g_{(\infty)})$. Following~\cite{oy,bn}, we say that $\{F_j\}_{j=1}^\ell$ is a 
{\it good generating system} (=\,\ggs) {\it for\/} $\h$, if $F_1^\bullet,\dots,F_\ell^\bullet$ are algebraically 
independent (and likewise for $\rr$ and $\{F_{j\bullet}\}$). The presence of a \ggs\ is usually required for 
proving that $\cz_0$, $\cz_\infty$, and $\gZ_{\lg\h,\rr\rg}$ are polynomial rings.

In this article, we further develop the general theory and describe new instances of non-degenerate
splittings with large {\bf polynomial} rings $\gZ_{\lg\h,\rr\rg}$. 

{\bf 1}) \  Let $H$ and $R$ be the connected subgroups of $G$ with Lie algebras $\h$ and $\rr$, 
respectively. For a splitting $\g=\h\oplus\rr$, set  $s_0:=\trdeg \bbk((\g/\h)^*)^{H}$ and 
$s_\infty:=\trdeg \bbk((\g/\rr)^*)^{R}$. We prove that $s_0+s_\infty\ge\ell$, and if $s_0+s_\infty=\ell$, then 
this splitting  is non-degenerate (Theorem~\ref{sum-r}). For a non-degenerate splitting, it is shown that if 
$\cz_0$ and $\cz_\infty$ are polynomial rings and there is a common \ggs\ for $\h$ and $\rr$, then 
$\gZ_{\lg\h,\rr\rg}$ is a polynomial ring (Theorem~\ref{thm:ggs2}). Another result is that if $\cz_0$ and 
$\cz_\infty$ are polynomial rings and $s_0+s_\infty=\ell$, then $\gZ_{\lg\h,\rr\rg}$ is a polynomial ring 
(Theorem~\ref{thm:sum}). We also relate the sum $s_0+s_\infty$ to some properties of the 
representations $(H:(\g/\h)^*)$ and $(R:(\g/\rr)^*)$, cf. Section~\ref{subs:pro-r-and-s}.

{\bf 2}) \  Let $\h_+$ be a {solvable horospherical} subalgebra of $\g$, i.e., 
$\ut\subset\h_+\subset\be$ for some Borel $\be\subset\g$. Then it has a 
complementary horospherical subalgebra $\h_-\subset\be_-$. The resulting non-degenerate splitting 
$\g=\h_+\oplus\h_-$ is said to be {\it horospherical}. We give a criterion for existence of a \ggs\ for 
$\h_+$ and show that if there is a \ggs, then $\cz\gS(\h_+\ltimes \h_-^{\sf ab})$ is a polynomial ring. 
Therefore, if there is a \ggs\ for $\h_+$ and $\h_-$ 
(not necessarily a common one!), then $\gZ_{\lg\h_+,\h_-\rg}$ is a (large) polynomial ring, see 
Section~\ref{sect:horo}. Using the toral part of $\h_+$ and a classical result of Richardson~\cite{Ri},
we also provide a useful necessary condition for presence of \ggs

{\bf 3}) \  An nice horospherical splitting occurs if $\te$ is a Cartan 
subalgebra of $\g$ and $\tilde\g=\g\dotplus\te$. Then $\tilde\g$ can be written as a direct sum of two 
horospherical subalgebras $\tilde\h$ and  $\tilde\rr$ such that either of them is isomorphic to a Borel 
subalgebra of $\g$. Hence the contractions $\tilde\g_{(0)}$ and $\tilde\g_{(\infty)}$ are 
isomorphic.  
We point out certain \ggs\ for $\tilde\h$ and $\tilde\rr$, 
which  implies that $\cz\gS(\tilde\g_{(0)})$ is a polynomial ring and
$\gZ_{\lg\tilde\h,\tilde\rr\rg}$ is a (large) polynomial \PC\ subalgebra of $\gS(\tilde\g)$
(Theorem~\ref{thm:bb}). Moreover, if the degrees of $F_1,\dots,F_\ell$ are even, then we get a {\sl common} \ggs\ for $\tilde\h$ and $\tilde\rr$.

{\bf 4}) \ The $(-1)$-eigenspace of an involution of maximal rank contains regular elements of $\g$. If $\g$ is simple, then there are few other cases with this property. The corresponding pairs  $(\g,\g_0)$ are: 
$(\mathfrak{sl}_n, \mathfrak{sl}_m\dotplus\mathfrak{sl}_{n-m}\dotplus\bbk)$ with $|n-2m|\le 1$; 
$(\mathfrak{so}_{2n}, \mathfrak{so}_{n+1}\dotplus \mathfrak{so}_{n-1})$;
$(\eus{E}_{6}, \eus{A}_{5}\dotplus\eus{A}_{1})$. For these $\g_0$, there is  a complementary solvable 
horospherical subalgebra $\h\subset\g$. Furthermore, $\g_0$ admits a \ggs\ and 
$\cz\gS(\g_0\ltimes\h^{\sf ab})$ is a polynomial ring~\cite{coadj, contr}. We prove that $\h$ admits a \ggs\ 
exactly if $\g=\mathfrak{sl}_{2n}$ or $\sone$. This implies that $\gZ_{\lg\h,\g_0\rg}$ is a large polynomial 
ring in these cases, see Section~\ref{sect:ohne}.

{\bf 5}) \ In Section~\ref{sect:related}, we show that our methods allow to derive some old results related 
to the Adler--Kostant--Symes theory and point out directions for further research.

{\bf Some notation}.  \\
{\bf --} \  A direct sum of Lie algebras is denoted by `$\dotplus$'; \\
{\bf --} \  $\ind\q$ is the index of a Lie algebra $\q$  and $\bb(\q)=\frac{1}{2}(\dim\q+\ind\q)$;      \\
{\bf --} \   $\cz\gS(\q)$ or $\gS(\q)^\q$ is the centre of the Lie--Poisson algebra $(\gS(\q), \{\,\,,\,\})$;  \\
{\bf --} \  $\bbk[X]$ is the algebra of regular functions on an algebraic variety $X$;  \\
{\bf --} \  $\bbk(X)$ is the field of rational functions on an irreducible algebraic variety $X$.

\section{Preliminaries on Poisson brackets and contractions}
\label{sect:prelim}

\noindent
Let $Q$\/ be an affine algebraic group with Lie algebra $\q$. Write $\gS(\q)$ for the symmetric algebra 
of $\q$ over $\bbk$. It is naturally identified with the graded algebra of polynomial functions on $\q^*$,  
also denoted by $\bbk[\q^*]$. The Lie--Poisson bracket in $\gS(\q)$ is defined on the elements of degree 
$1$ (i.e., on $\q$) by $\{x,y\} :=[x,y]$. This makes $\q^*$ a Poisson variety, and $\pi$ denotes the 
corresponding Poisson tensor, see e.g.~\cite[Chap.\,1.2]{duzu}. 

For $\xi\in\q^*$, let $Q\xi$ denote its coadjoint orbit. Then $Q^\xi$ is the stabiliser of $\xi$ in $Q$ and
$\q^\xi=\Lie(Q^\xi)\subset\q$. 
The {\it index of}\/ $\q$, $\ind\q$, is the minimal codimension of $Q$-orbits in $\q^*$, i.e., 
$\ind\q=\min_{\xi\in\q^*} \dim \q^\xi$. By Rosenlicht's theorem~\cite[\S\,2.3]{VP}, one also has
$\ind\q=\trdeg \bbk(\q^*)^Q$.
The set of $Q$-{\it regular\/} elements of $\q^*$ is
\beq       \label{eq:regul-set}
    \q^*_{\sf reg}=\{\eta\in\q^*\mid \dim \q^\eta=\ind\q\} 
\eeq
and $\q^*_{\sf sing}=\q^*\setminus \q^*_{\sf reg}$. We say that $\q$ has the {\sl codim}--$2$ property if 
$\codim \q^*_{\sf sing}\ge 2$.
The {\it Poisson centre\/} of $\gS(\q)$ is
\[
    \cz\gS(\q):=\{F\in \gS(\q)\mid \{F,x\} =0 \ \ \forall x\in\q\}.
\]
It follows that $\cz\gS(\q)$ coincides with the {\it algebra of symmetric invariants\/} of $\q$, also denoted 
$\gS(\q)^\q$. If $Q$ is connected, then $\cz\gS(\q)=\gS(\q)^\q=\bbk[\q^*]^Q$ and the symplectic leaves in 
$\q^*$ are the coadjoint $Q$-orbits. For any 
$\xi\in\q^*$, $\pi(\xi)$ is a skew-symmetric bilinear form on $T^*_\xi(\q^*)\simeq \q$, which is defined by 
$\pi(\xi)(x,y)=\xi([x,y])$ for $x,y\in\q$. It is easily seen that $\ker\pi(\xi)=\q^\xi$. Hence the restriction of 
$\pi(\xi)$ to $T^*_\xi(Q\xi)\simeq \q/\q^\xi$ is non-degenerate. 

If $\gA$ is a \PC\ subalgebra of $\gS(\q)$, then $\{\gA|_{Q\xi},\gA|_{Q\xi}\}=0$. Hence 
$\trdeg(\gA|_{Q\xi})\le\frac{1}{2}\dim Q\xi$. Then $\gA$ is 
said to be {\it complete on\/} $Q\xi$, if $\trdeg(\gA|_{Q\xi})=\frac{1}{2}\dim Q\xi$.
The passage from $\q^*$ to $Q\xi$ shows that  $\trdeg\gA\le \bb(\q):=\frac{1}{2}(\dim\g+\ind\q)$.
Since the coadjoint orbits are even-dimensional, $\bb(\q)$ is an integer. If $\q$ is reductive, then
$\ind\q=\rk\q$ and $\bb(\q)$ equals the dimension of a Borel subalgebra of $\q$. 

\subsection{The Lenard--Magri scheme} 
\label{subs:LM} 
Let $\{\ ,\,\}'$ and $\{\ ,\,\}''$ be compatible Poisson brackets on $\q^*$~\cite[1.8.3]{duzu}. This yields a 
$2$-parameter family of Poisson brackets $a\{\ ,\,\}'+b\{\ ,\,\}''$, $a,b\in\bbk$, and we assume that
this family contains the initial Lie--Poisson bracket. As we are mostly interested in the 
corresponding Poisson centres, it is convenient to organise this, up to scaling, in a 1-parameter family 
$\{\ ,\ \}_t=\{\ ,\ \}'+t\{\ ,\ \}''$, $t\in\BP:=\bbk\cup\{\infty\}$, where $t=\infty$ corresponds to  
$\{\ ,\ \}''$. Let $\q^*_{(t)}$ denote the Poisson variety corresponding to $t\in\BP$.
Let $\ind\!\{\,\,,\,\}_t$ denote is the minimal codimension of the symplectic leaves in $\q^*_{(t)}$.  
For almost all $t\in\BP$, $\ind\{\,\,,\,\}_t$ has one and the same (minimal) value. Set
\[ 
    \text{ $\BP_{\sf reg}=\{t\in \BP\mid \ind\!\{\,\,,\,\}_t \text{ is minimal}\}$ \ and \ 
   $\BP_{\sf sing}=\BP\setminus \BP_{\sf reg}$. }
\]
Let $\cz_t$ denote the centre of the Poisson algebra $(\gS(\q),\{\ ,\ \}_t)$. The crucial fact is that the algebra $\gZ$ generated by 
$\{\cz_t\mid t\in\BP_{\sf reg}\}$ is Poisson-commutative w.r.t{.} to {\bf any} bracket in the family.
In many cases, this procedure provides a \PC\ subalgebra of $\gS(\q)$ of maximal transcendence 
degree. An obvious first step is to take the initial Lie--Poisson 
bracket $\{\,\,,\,\}$ as $\{\,\,,\,\}'$. The rest depends on a clever choice of $\{\,\,,\,\}''$.

If $\q=\g$ is reductive, then a notable realisation of this scheme is the {\it argument shift method}, which 
goes back to~\cite{mf}. Here $\{\,\,,\,\}''=\{\,\,,\,\}_\gamma$ is the Poisson bracket of degree zero 
associated with $\gamma\in\g^*$, where $\{\xi,\eta\}_\gamma:=\gamma([\xi,\eta])$ for $\xi,\eta\in\g$. The 
\PC\ subalgebras $\gZ=\gZ_\gamma$ occurring in this way are known as {\it Mishchenko--Fomenko subalgebras}. 
In this setting, a ``clever'' choice means that $\gamma$ should be a regular element of $\g^*$.

In this article, we consider only families consisting of {\bf linear} Poisson brackets, i.e., the brackets 
$\{\ ,\ \}_t$ ($t\in\BP$) provide various Lie algebra structures on the underlying vector space $\q$.
The Lie algebra corresponding to $t$ is denoted by $\q_{(t)}$ and then $\ind\q_{(t)}=\ind\{\,\,,\,\}_t$.

\subsection{Contractions and invariants} 
\label{subs:contr-&-inv} 
Let $\h$ be a subalgebra of $\q$. Consider the semi-direct product $\q_{(0)}=\h\ltimes (\q/\h)^{\rm ab}$, 
where the superscript `ab' means that the $\h$-module $\q/\h$ is an abelian ideal in $\q_{(0)}$. If 
$\me\subset\q$ is any complementary subspace to $\h$, then we can also write 
$\q_{(0)}=\h\ltimes\me^{\rm ab}$, where the new Lie bracket $[\ ,\, ]_0$ on the vector space $\q=\q_{(0)}$ 
is defined by
\[
         [h_1+m_1,h_2+m_2]_0=[h_1,h_2]+ {\sf pr}_\me([h_1,m_2]- [h_2,m_1]),
\]
where $h_i\in\h$, $m_i\in\me$, and ${\sf pr}_\me: \q\to \me$ is the projection parallel to $\h$. The Lie 
algebra $\q_{(0)}$ is obtained from $\q$ via a contraction, and it is called an {\it In\"on\"u--Wigner 
contraction} of $\q$, see e.g.~\cite[Ch.\,7, \S\,2.5]{t41},\cite{alafe},\cite{T}. A lot of information on the 
symmetric invariants of In\"on\"u--Wigner contractions is obtained in~\cite{coadj, rims, contr, Y-imrn}. 

The sum $\q=\h\oplus \me$ provides the bi-homogeneous decomposition of any homogeneous 
polynomial $F\in \gS(\q)$:
\[
    \textstyle F=\sum_{i=0}^{d} F_{i,d-i} \ .
\]
Here $d=\deg F$ and $F_{i,d-i}\in \gS^i(\h)\otimes \gS^{d-i}(\me)\subset \gS^{d}(\q)$. Then $(i,d-i)$ is 
the {\it bi-degree\/} of $F_{i,d-i}$. We also set $\deg_\h F_{i,d-i}=i$ and $\deg_\me F_{i,d-i}=d-i$. Let 
$F^\bullet$ (resp. $F_{\bullet}$) denote the nonzero bi-homogeneous component of $F$ with maximal 
$\me$-degree (resp. $\h$-degree). Then we set $\deg_{\me} F:=\deg_{\me} F^\bullet$ and 
$\deg_{\h} F:=\deg_{\h} F_\bullet$. If $F\in\gS(\q)^\q$, then $F^\bullet$ is a symmetric invariant of 
$\q_{(0)}$, i.e., $F^\bullet\in \gS(\q_{(0)})^{\q_{(0)}}$~\cite[Prop.\,3.1]{coadj}. 
\\ 
\indent The following observation shows that the properties of highest components does not 
essentially depend on the choice of a complementary subspace $\me$. Write temporarily $F^\bullet_\me$
for the highest component of $F$ with respect to $\me$.

\begin{lm}   
\label{lm:independ}
Let\/ $\me$ and\/ $\tilde\me$ be different complementary subspaces to $\h$. Then there is a linear 
operator $\cL: \q\to \q$ such that $\cL\vert_\h={\sf id}$, $\cL(\me)=\tilde\me$, and 
$\cL(F^\bullet_\me)=F^\bullet_{\tilde\me}$ \ for all $F\in\gS(\q)$. 
\emph{(Here we use the natural extension of $\cL$ to $\gS(\q)$.)}
\end{lm}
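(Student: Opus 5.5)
Take for $\cL$ the linear map with $\cL|_\h=\mathrm{id}$ and $\cL(m)=\tilde m$, where for $m\in\me$ the vector $\tilde m$ is the unique element of $\tilde\me$ with $m-\tilde m\in\h$. Equivalently, $\cL|_\me$ is the inverse of ${\sf pr}_\me|_{\tilde\me}$, which is an isomorphism $\tilde\me\to\me$ because both subspaces complement $\h$. Thus $\cL$ is invertible, fixes $\h$, and sends $\me$ onto $\tilde\me$. Its extension to $\gS(\q)$ is an algebra automorphism. We then compare the bi-degree decompositions of a homogeneous $F$ with respect to $\q=\h\oplus\me$ and $\q=\h\oplus\tilde\me$; by linearity and homogeneity, homogeneous $F$ suffice.

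Choose bases $h_1,\dots,h_a$ of $\h$ and $m_1,\dots,m_b$ of $\me$, and set $\tilde m_j=\cL(m_j)$. Then $m_j=\tilde m_j+u_j$ with $u_j\in\h$. Write $F=\sum_{i}F_{i,d-i}$ for the decomposition with respect to $\me$, and let $k=\deg_\me F$ be the largest index with $F_{d-k,k}\ne0$, so $F^\bullet_\me=F_{d-k,k}$. A monomial $h^\alpha m^\beta$ with $|\beta|=j$ becomes $h^\alpha\prod_j(\tilde m_j+u_j)^{\beta_j}$. Expanding, its component of $\tilde\me$-degree $j$ is $h^\alpha\tilde m^\beta=\cL(h^\alpha m^\beta)$, and all remaining terms have $\tilde\me$-degree strictly less than $j$. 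Hence, in the $\tilde\me$-decomposition, the component of $\tilde\me$-degree $k$ of $F$ equals $\cL(F_{d-k,k})$, and all components of higher $\tilde\me$-degree vanish. Since $\cL$ is injective, $\cL(F_{d-k,k})\neq0$. Therefore $F^\bullet_{\tilde\me}=\cL(F^\bullet_\me)$, and in particular $\deg_\me F=\deg_{\tilde\me}F$.

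Two routine points remain: the argument applies to every homogeneous $F$, and for $F=0$ the statement is trivial. The only real step is the triangularity observation. The substitution $m_j\mapsto\tilde m_j+u_j$ can only move mass from $\me$-degree into $\h$-degree, never the reverse, so the top $\me$-component is transported unchanged by $\cL$. I do not expect any genuine obstacle.
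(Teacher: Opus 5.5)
Your proof is correct and follows the paper's argument: you use the same operator $\cL$, defined by $m_j=\cL(m_j)+u_j$ with $u_j\in\h$, and the same expansion of monomials $h^\alpha\prod(\tilde m_j+u_j)^{\beta_j}$. Your triangularity step usefully fills in the passage from monomials to an arbitrary homogeneous $F$: the $\tilde\me$-degree-$k$ component of $F$ equals $\cL(F_{d-k,k})$, no higher component survives, and injectivity of $\cL$ keeps this component nonzero. The paper compresses that passage into ``it suffices to treat the case in which $F$ is a monomial''.
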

\begin{proof}
Let $(y_j)_{j\in J}$ be a basis for $\me$. For each $j\in J$, we have $y_j=\tilde y_j+z_j$ with some 
$\tilde y_j\in\tilde\me$ and $z_j\in\h$. Then $(\tilde y_j)_{j\in J}$ is a basis for $\tilde\me$, and we set 
$\cL(y_j)=\tilde y_j$. To prove the last relation, it suffices to treat the case in which $F$ is a monomial.
Suppose that $F=x_1^{a_1}\dots x_s^{a_s}{\cdot} y_1^{b_1}\dots y_t^{b_t}$, where $\{x_i\}$ is a basis 
for $\h$. Then $F=F^\bullet_\me$. In terms of $\tilde\me$, we have
$F=x_1^{a_1}\dots x_s^{a_s}{\cdot} (\tilde y_1+z_j)^{b_1}\dots (\tilde y_t+z_t)^{b_t}$ and hence
$F^\bullet_{\tilde\me}=x_1^{a_1}\dots x_s^{a_s}{\cdot}\tilde y_1^{b_1}\dots\tilde y_t^{b_t}=\cL(F^\bullet_\me)$.
\end{proof}

The quotient field of $\gS(\q)^\q$, $\mathsf Q(\gS(\q)^\q)$, is contained in $\bbk(\q^*)^Q$. Hence 
$\trdeg \gS(\q)^\q\le \ind\q$.

\begin{df}          \label{def-ggs}
Suppose that $\gS(\q)^\q$ is a graded polynomial ring.
A Hilbert basis $F_1,\dots,F_\ell$ in $\gS(\q)^\q$ is called a {\it good generating system} (=\ggs)
{\it for\/} $\h$, if $\ell=\ind\q$ and $F_1^\bullet,\dots,F_\ell^\bullet$ are algebraically independent. 
We also say that $\h$ {\it admits a\/} {g.g.s.}
\end{df}

The assumptions of this definition are satisfied if $\q$ is reductive.
By Lemma~\ref{lm:independ}, the notion of a \ggs\ is well defined.

\begin{ex}    \label{ex:ggs}
1. The property of being `good' depends on a generating system (i.e., Hilbert basis). For $\g=\glN$, there 
are two well-known Hilbert bases in $\gS(\g)^\g\simeq\bbk[\glN]^{GL_N}$. The first basis 
consists of the coefficients of the characteristic polynomial of a matrix $A\in\glN$. The second basis is formed by the traces of powers of $A$, i.e.,
$F_i(A)=\tr(A^i)$, $i=1,\dots,N$. By~\cite[Theorem\,4.5]{coadj}, the first basis is a {\sf g.g.s.} 
for $\h=\mathfrak{gl}_{N-m}\dotplus\mathfrak{gl}_m$. But the second basis is not a {\sf g.g.s.}
unless $|N-2m|\le 1$. This can be checked directly or using Theorem~\ref{thm:kot14}{\sf (ii)}.
\\  \indent
2. It can also happen that, for a given $\h$, there is no {\sf g.g.s.} at all,  cf.~\cite[Remark\,4.3]{coadj} and 
examples in Sections~\ref{subs:gl2} and \ref{subs:E6}. 
\end{ex}

The following important result shows that, for a wide class of Lie algebras, 
 {\bf (1)} there is an effective method to verify that a Hilbert basis $(F_1,\dots,F_\ell)$ is a \ggs\ 
for $\h$; 
 {\bf (2)} the presence of a \ggs\ is helpful in describing the Poisson centre for the In\"on\"u--Wigner 
contraction $\q_{(0)}=\h\ltimes\me^{\rm ab}$. 

\begin{thm}[{\cite[Theorem\,3.8]{contr}}]         \label{thm:kot14}
Let $\q$ satisfy the following properties: ({\it\bfseries a})  $\q$ has the {\sl codim}-$2$ property,
({\it\bfseries b}) $\gS(\q)^\q$ is a polynomial ring in $\ell=\ind\q$ variables, and ({\it\bfseries c}) for a (=any) Hilbert basis of $\gS(\q)^\q$, one has $\sum_{i=1}^\ell\deg F_i=\bb(\q)$.
Then
\begin{itemize}
\item[\sf (i)] \ $\sum_{j=1}^{\ell} \deg_{\me}\! F_j\ge \dim\me$;
\item[\sf (ii)] \ $F_1,\dots,F_{\ell}$ is a {\sf g.g.s.} for $\h$ if and only if\/ $\sum_{j=1}^{\ell} \deg_{\me}\! F_j=\dim\me$;
\item[\sf (iii)] \ if\/ $\q_{(0)}$ has the {\sl codim}--$2$ property, $\ind\q_{(0)}=\ell$, and  
$F_1,\dots,F_{\ell}$ is a {\sf g.g.s.} for\/ $\h$, then $\cz\gS(\q_{(0)})$ is a polynomial ring that is freely 
generated by $F_1^\bullet,\dots,F_{\ell}^\bullet$. 
\end{itemize}
\end{thm}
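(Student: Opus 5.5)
The plan is the standard Jacobian argument based on the contraction $\varphi_s\colon\q\to\q$, defined by $\varphi_s|_\h=\mathrm{id}$ and $\varphi_s|_\me=s\cdot\mathrm{id}$ for $s\in\bbk^\times$. Writing $F=\sum_i F_{i,d-i}$, one has $\varphi_s(F)=\sum_i s^{d-i}F_{i,d-i}$, so that $s^{-\deg_\me F}\varphi_s(F)\to F^\bullet$ as $s\to 0$. Since $\varphi_s$ is invertible, the functions $\varphi_s(F_1),\dots,\varphi_s(F_\ell)$ are algebraically independent for every $s\neq0$. The key tool from the theory of \cite{contr} (cf.\ the Kostant-type criterion) is the following. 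Hypotheses (a)--(c) imply that $dF_1\wedge\dots\wedge dF_\ell$ is nonzero at every $\xi\in\q^*_{\sf reg}$, and this happens exactly on $\q^*_{\sf reg}$. Moreover, there is a nonzero "fundamental semi-invariant" form relation
\[
dF_1\wedge\dots\wedge dF_\ell = c\,\hat\pi^{\wedge r}
\]
(up to the identification by the volume form), where $\hat\pi^{\wedge r}$ is the top nonzero power of the Poisson tensor and $r=(\dim\q-\ell)/2$. The degree count $\sum\deg F_i=\bb(\q)$ is exactly what makes both sides have the same polynomial degree, and codim-$2$ makes $c$ a nonzero constant.

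For (i), I would apply $\varphi_s$ to this identity. The right-hand side transforms as $\pi\mapsto\pi_s$, and the limit $s\to0$ of $\pi_s$, suitably rescaled, is the Poisson tensor of $\q_{(0)}$; it contributes a power $s^{N}$ whose exponent is controlled by $\dim\me$. More precisely, I would compare the lowest power of $s$ appearing on each side. On the left, the lowest power is at least $\sum_j\deg_\me F_j$ minus the contribution of differentiating in the $\me$-directions, which gives a correction of $\dim\me$ from the volume form on $\q$. On the right, $\hat\pi^{\wedge r}$ is nonzero, so some power occurs. Matching exponents should give $\sum_j\deg_\me F_j-\dim\me\ge0$, which is (i). Concretely, $\varphi_s^*(dF_1\wedge\dots\wedge dF_\ell)$ equals $s^{\sum\deg_\me F_j}\,dF_1^\bullet\wedge\dots\wedge dF_\ell^\bullet+\text{higher}$, while the right side scales by exactly $s^{\dim\me}$ at leading order after the coordinate change. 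This yields the inequality, with equality precisely when the leading term $dF_1^\bullet\wedge\dots\wedge dF_\ell^\bullet$ is nonzero.

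That last observation gives (ii). By the Jacobian criterion in characteristic $0$, $dF_1^\bullet\wedge\dots\wedge dF_\ell^\bullet\ne0$ iff the $F_j^\bullet$ are algebraically independent, i.e.\ iff the system is a \ggs. For (iii), I would argue as follows. The $F_j^\bullet$ lie in $\cz\gS(\q_{(0)})$ and are algebraically independent, with $\sum\deg F_j^\bullet=\bb(\q)=\bb(\q_{(0)})$ since $\ind\q_{(0)}=\ell$. In the equality case, their Jacobian is proportional to the top power of the Poisson tensor of $\q_{(0)}$, hence nonzero on $(\q_{(0)}^*)_{\sf reg}$, whose complement has codimension $\ge2$. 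The standard criterion (\cite[Theorem\,1.2]{coadj}-type: independent invariants with differentials independent off codimension $2$ generate the full invariant ring, using $\trdeg\le\ind$) then shows that they freely generate $\cz\gS(\q_{(0)})$.

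The main obstacle is the exponent bookkeeping in (i): one must verify carefully that the Poisson-tensor side scales by exactly $s^{\dim\me}$ after the identification of forms, and that no cancellation can push the left side below that order.
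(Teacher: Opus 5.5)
Your strategy, transporting the identity $dF_1\wedge\dots\wedge dF_\ell=C\,\iota(\pi^{\wedge r})$ along the contraction and comparing orders in $s$, is the right one. Your part (iii) is fine: it is the standard criterion applied to $\q_{(0)}$, namely $\ell$ algebraically independent invariants of total degree $\bb(\q)=\bb(\q_{(0)})$, the codim--$2$ property, and $\trdeg\le\ind\q_{(0)}=\ell$.

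But the step you defer as ``exponent bookkeeping'' is a genuine gap. Done carefully, $\q_{(s)}$ (bracket $\varphi_s^{-1}[\varphi_s x,\varphi_s y]$, tending to $\q_{(0)}$ as $s\to0$) has Hilbert basis $\varphi_s^{-1}(F_j)=s^{-D_j}\bigl(F_j^\bullet+O(s)\bigr)$ with $D_j=\deg_\me F_j$. Its identity has constant $C(\det\varphi_s)^{-1}=C\,s^{-\dim\me}$. Hence
\[
dF_1^\bullet\wedge\dots\wedge dF_\ell^\bullet+O(s)=C\,s^{\sum_j D_j-\dim\me}\bigl(\iota(\pi_0^{\wedge r})+O(s)\bigr),
\]
where $\pi_0$ is the Poisson tensor of $\q_{(0)}$. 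This yields ``$\sum_j D_j\ge\dim\me$, with equality iff $dF_1^\bullet\wedge\dots\wedge dF_\ell^\bullet\ne0$'' only if $\pi_0^{\wedge r}\not\equiv0$, i.e. only if $\ind\q_{(0)}=\ell$. Otherwise $\pi_{(s)}^{\wedge r}=s^{k_0}\bigl(P_{k_0}+O(s)\bigr)$ for some $k_0\ge1$, and you only get $\sum_jD_j\ge\dim\me-k_0$. The nonvanishing of $\pi^{\wedge r}$ that you invoke concerns $s=1$, not the limit $s\to0$.

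No refinement of the bookkeeping can avoid this, because (i) and (ii) are false without that hypothesis. Take $\q=\mathfrak{sl}_3$, $\h=\te$, $\me=\ut_+\oplus\ut_-$. Then $\sum_j\deg_\me F_j\le\sum_j\deg F_j=5<6=\dim\me$. Yet $F_2^\bullet$ (an irreducible quadric) and $F_3^\bullet$ (a cubic) are algebraically independent, so $F_2,F_3$ is a {\sf g.g.s.} for $\te$ while equality in (ii) fails. So you must assume $\ind\q_{(0)}=\ell$ already for (i) and (ii). The statement as printed puts this only in (iii), but it holds in every application in the paper, where $\h$ is spherical. Under that assumption your argument is complete.

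There is also a direction slip. With $\varphi_s\vert_\me=s\cdot\mathrm{id}$ one has $\varphi_s(F)=\sum_i s^{d-i}F_{i,d-i}$, so $s^{-\deg_\me F}\varphi_s(F)$ does not converge as $s\to0$. The lowest power of $s$ in $\varphi_s(F)$ isolates the component of \emph{minimal} $\me$-degree, not $F^\bullet$. For the same reason, your expansion ``$s^{\sum\deg_\me F_j}\,dF_1^\bullet\wedge\dots+\text{higher}$'' has the order reversed. Work with $\varphi_s^{-1}(F)$, which is also what is central in $\q_{(s)}$.

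The factor $s^{-\dim\me}=(\det\varphi_s)^{-1}$ does not come from ``differentiating in the $\me$-directions'' on the left. Let $\lambda_a\in\{1,s\}$ be the eigenvalues of $\varphi_s$. The $I$-minors of the Jacobian pick up $\prod_{a\in I}\lambda_a^{-1}$, and the complementary Pfaffian minors of the Poisson tensor pick up $\prod_{a\notin I}\lambda_a$; the factor $s^{-\dim\me}$ is their combined effect on the constant.
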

\begin{rmk}
1) The assumptions {\it\bfseries (a),(b),(c)} hold for the reductive Lie algebras $\g$, where $\ind\g=\rk\g$
and $\bb(\g)=\dim\be$. \\ \indent
2) Since $\q_{(0)}$ is a contraction of $\q$, $\ind\q_{(0)}\ge \ind\q=\ell$. Hence a condition in {\sf (iii)} means that $\ind\q_{(0)}$ is as small as possible.
\\ \indent
3) If the {\sl codim}--$2$ property does not hold in {\sf (iii)}, then it can happen that 
$\bbk[F_1^\bullet,\dots,F_\ell^\bullet]$ is a proper subalgebra of $\cz\gS(\q_{(0)})$. On the other hand, 
the equality $\bbk[F_1^\bullet,\dots,F_\ell^\bullet]=\cz\gS(\q_{(0)})$ may hold  without the 
{\sl codim}--$2$ property for $\q_{(0)}$, cf. examples in Section~\ref{sec:g+t}.
\end{rmk}
Let $\cz\gS(\q)^\bullet$ denote the subalgebra of $\gS(\q)$ generated by all $F^\bullet$, where 
$F\in \cz\gS(\q)$ is homogeneous. Equivalently, if $\cz\gS(\q)_p$ is the vector space of symmetric invariants of degree $p$ and $\cz\gS(\q)^\bullet_p=\{F^\bullet\mid F\in\cz\gS(\q)_p\}$, then
$\cz\gS(\q)^\bullet=\bigoplus_{p\ge 0}\cz\gS(\q)^\bullet_p$.

\begin{lm}           \label{bullet}
Regardless of the presence of a \ggs, we have  $\trdeg \cz\gS(\q)^\bullet=\trdeg \cz\gS(\q)$.
\end{lm}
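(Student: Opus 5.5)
This is the standard "leading term" argument: compare dimensions of the graded pieces $\cz\gS(\q)_p$ and $\cz\gS(\q)^\bullet_p$, and read the transcendence degree off the growth of these dimensions.

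First, for each $p$ the space $\cz\gS(\q)^\bullet_p$ is indeed a vector space with $\dim \cz\gS(\q)^\bullet_p=\dim\cz\gS(\q)_p$. To see this, filter $V=\cz\gS(\q)_p$ by $\me$-degree:
\[
V_{\ge j}=\{F\in V\mid \text{all bi-homogeneous components of } F \text{ have } \me\text{-degree}\le j\}.
\]
The map $F\mapsto F^\bullet$ identifies $V_{\ge j}/V_{\ge j-1}$ with the space of top components of $\me$-degree exactly $j$. Summing over $j$ gives the equality of dimensions. It also shows that $\cz\gS(\q)^\bullet_p$, understood as the span of these top components, is spanned by the $F^\bullet$. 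Moreover $(FG)^\bullet=F^\bullet G^\bullet$, because $\gS(\q)=\gS(\h)\otimes\gS(\me)$ is a domain. Hence the product of two elements of the form $F^\bullet$ is again of this form, and $\cz\gS(\q)^\bullet=\bigoplus_p\cz\gS(\q)^\bullet_p$ is a graded subalgebra with the same Hilbert function as $\cz\gS(\q)$.

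Second, for a finitely generated graded domain $A$ over $\bbk$, $\trdeg A$ equals the Krull dimension. This in turn is the order of growth of $\sum_{p\le n}\dim A_p$, i.e.\ the order of the pole at $t=1$ of the Hilbert series. Equal Hilbert functions therefore give equal transcendence degrees, provided both algebras are finitely generated. This finite-generation requirement is the main obstacle. For $\cz\gS(\q)$ finite generation may fail for a general $\q$, and $\cz\gS(\q)^\bullet$ need not be finitely generated even when $\cz\gS(\q)$ is.

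To avoid finite generation altogether, I would use a degeneration argument instead. Consider the one-parameter family of linear maps $\varphi_s$ acting by $s$ on $\me$ and by $1$ on $\h$. For homogeneous $F$, $s^{-\deg_\me F}\varphi_s(F)\to F^\bullet$ as $s\to 0$. Take any finite set $F_1,\dots,F_k\in\cz\gS(\q)$. If the $F_i^\bullet$ are algebraically independent, then the $F_i$ are algebraically independent, by the standard top-component argument: a relation among the $F_i$ gives, in its top $\me$-degree part, a relation among the $F_i^\bullet$. This shows $\trdeg\cz\gS(\q)^\bullet\le\trdeg\cz\gS(\q)$.

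For the converse inequality, use the Hilbert-function comparison but with counting truncated to finite-dimensional pieces. Choose $k=\trdeg\cz\gS(\q)$ algebraically independent homogeneous invariants $F_1,\dots,F_k$ and put $B=\bbk[F_1,\dots,F_k]$. Applying the same filtration argument to $B$ in place of $\cz\gS(\q)$ shows that $B^\bullet$, the span of $\{F^\bullet\mid F\in B\}$, is a graded subalgebra of $\cz\gS(\q)^\bullet$ with $\dim B^\bullet_p=\dim B_p$. Since $B$ is a polynomial ring in $k$ variables, $\dim B_p$ grows like $p^{k-1}$. A graded subalgebra of $\gS(\q)$ whose Hilbert function grows like $p^{k-1}$ has transcendence degree at least $k$. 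Indeed, if $\trdeg$ were $<k$, every finitely generated subalgebra would have growth at most $p^{k-2}$. Taking a finitely generated subalgebra containing $\bigoplus_{p\le n}B^\bullet_p$ for $n$ large gives a contradiction, since such a subalgebra would contain $B^\bullet_p$ for many $p$ and so grow like $p^{k-1}$. Making this truncation estimate precise is the technical step I expect to require the most care. Combining the two inequalities gives $\trdeg\cz\gS(\q)^\bullet=\trdeg\cz\gS(\q)$.
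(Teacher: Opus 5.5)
Your first step is sound and matches the paper's. The $\me$-degree filtration of $\cz\gS(\q)_p$ gives $\dim\cz\gS(\q)^\bullet_p=\dim\cz\gS(\q)_p$, which is what the paper's Gr\"obner-basis remark amounts to, and $(FG)^\bullet=F^\bullet G^\bullet$ makes the span of top components a graded subalgebra. Your proof of $\trdeg\cz\gS(\q)^\bullet\le\trdeg\cz\gS(\q)$, via the weighted top part of a relation, is also correct; a transcendence basis of $\cz\gS(\q)^\bullet$ can be chosen among the generators $F^\bullet$.

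The gap is in the reverse inequality, at exactly the step you flagged, and the argument you sketch there does not work. For a finitely generated graded domain $D$ of transcendence degree $r<k$ one has $\dim D_p\le C_D\,p^{r-1}$, but the constant $C_D$ depends on $D$. Let $D_n$ be generated by $\bigoplus_{p\le n}B^\bullet_p$. Knowing that $D_n$ contains $B^\bullet_p$ for $p\le n$ only forces $C_{D_n}$ to grow with $n$. It says nothing about the growth of $D_n$ as $p\to\infty$, so no contradiction arises. The finite-generation obstacle you identified has not been bypassed.

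What is missing is a bound on $\dim A_p$ for an \emph{arbitrary}, possibly non-finitely generated, graded subalgebra $A\subseteq\gS(\q)$, depending only on $r=\trdeg A$. This is precisely the input the paper imports from Borho--Kraft, in the form ``equal Poincar\'e series imply equal transcendence degree''. You can supply it directly as follows.
\begin{itemize}
\item Let $\Phi\colon\q^*\to A_p^*$ be the map given by a basis of $A_p$, and let $Y$ be the closure of its image, so $\dim Y\le r$.
\item At a generic $v$ one has $d\Phi_v(v)=p\,\Phi(v)\ne 0$. Hence a generic $r$-dimensional linear subspace $U\subseteq\q^*$ satisfies $d\Phi_v(U)=T_{\Phi(v)}Y$ for some $v\in U$, so $\Phi(U)$ is dense in $Y$.
\item Therefore an element of $A_p$ vanishing on $U$ vanishes on all of $\q^*$. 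So restriction $A_p\to\bbk[U]_p$ is injective, and $\dim A_p\le\binom{p+r-1}{r-1}$.
\end{itemize}
Apply this to $A=B^\bullet$ with $r<k$. It gives $\sum_{p\le n}\dim B_p=\sum_{p\le n}\dim B^\bullet_p=O(n^{r})$. But a polynomial ring in $k$ homogeneous generators has $\sum_{p\le n}\dim B_p\ge c\,n^k$, a contradiction. State the growth via these cumulative sums: with generators of different degrees, $\dim B_p$ itself can vanish for some $p$. With this lemma your proof closes. In fact the same bound gives both inequalities at once from the equality of Hilbert functions, as in the paper.
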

\begin{proof}
Using rudiments of Gr\"obner bases technique, one can construct a basis $(H_1,\dots,H_n)$ for 
$\cz\gS(\q)_p$ such that $H_1^\bullet,\dots, H_n^\bullet$ is a basis for $\cz\gS(\q)^\bullet_p$, cf.~\cite[Lemma\,1.3]{alafe}.
Therefore, the Poincar{\'e} series of the graded algebras $\cz\gS(\q)$ and $\cz\gS(\q)^\bullet$ 
coincide. Hence they have one and the same transcendence degree by~\cite[Satz\,4.5]{bk-GK}. 
\end{proof}

\section{Poisson-commutative subalgebras related to splittings}
\label{sect:PC}

Let $\q=\h\oplus\rr$ be a splitting of $\q$. Set $\q_{(0)}:=\h\ltimes\rr^{\rm ab}$ and 
$\q_{(\infty)}:=\rr\ltimes \h^{\rm ab}$. We identify $\q,\q_{(0)},$ and $\q_{(\infty)}$ as vector spaces.
Let $x_\rr\in\rr$ and $x_{\h}\in\h$ denote the components of  $x\in\q$. The semi-direct structure of 
$\q_{(0)}$ and $\q_{(\infty)}$ shows that the corresponding Poisson brackets are defined on 
$\q=\gS^1(\q)$ as follows:
\beq    \label{eq:2-brak}
\{x,y\}_0=\begin{cases}  [x,y] & \text{ if } \ x,y\in \h, \\
{}[x,y]_{\rr} & \text{ if } \ x \in\h,y\in \rr, \\
\quad 0 & \text{ if } \ x,y\in \rr ,
\end{cases} \ \ \& \ \   
\{x,y\}_\infty=\begin{cases}\quad 0 & \text{ if } \ x,y\in \h, \\
{}[x,y]_{\h} & \text{ if } \ x \in\h, y\in \rr, \\
 [x,y] & \text{ if } \ x, y\in \rr.
\end{cases}
\eeq
Then $\{\,\,,\,\}_0+\{\,\,,\,\}_\infty$ is the initial Lie--Poisson bracket in $\gS(\q)$. Hence these brackets are 
compatible. In this setting, the \PC\ subalgebra of $\gS(\q)$ given by the Lenard--Magri is denoted by 
$\gZ_{\lg\h,\rr\rg}$. Here all brackets $\{\,\,,\,\}_t=\{\,\,,\,\}_0+t\{\,\,,\,\}_\infty$ with $t\in \bbk\setminus\{0\}$ 
are isomorphic~\cite{bn}. Hence for the family $\{\,\,,\,\}_t$ ($t\in\BP$), we have 
$\BP_{\sf reg}\supset \bbk\setminus\{0\}$. If $\ind\q_{(0)},\ind\q_{(\infty)}>\ind\q$, then
$\BP_{\sf sing}=\{0,\infty\}$ and $\gZ_{\lg\h,\rr\rg}$ is generated by $\cz_t$ with $t\ne 0,\infty$.
It can be shown in general that if $\BP_{\sf sing}\ne\varnothing$, then $\trdeg\gZ_{\lg\h,\rr\rg}<\bb(\q)$. 
For this reason, we are primarily interested in splittings with $\BP_{\sf reg}=\BP$. By~\cite{T}
(cf. also~\cite[Section\,2]{bn}), for a reductive $\q$,
this property is equivalent to that both $\h$ and $\rr$ are spherical subalgebras of $\q$.

Let $\gZ_\times$ denote the subalgebra generated by the Poisson centres $\cz_t$ $(t\ne 0,\infty)$. By the previous paragraph, we have $\gZ_\times\subset \gZ_{\lg\h,\rr\rg}$ regardless of 
the properties of $\q_{(0)}$ and $\q_{(\infty)}$.
The following result is implicit in \cite[Thm.\,3.2]{bn}. It demonstrates the utility of bi-homogeneous 
components of invariants in the setting of splittings.

\begin{prop}    \label{prop:Z-x}  \,
The algebra $\gZ_\times\subset \gS(\q)$ is equal to the algebra generated by the bi-homogeneous 
components of all homogeneous $F\in\gS(\q)^\q$. In particular, if $F_1,\dots, F_m$ is a generating set for 
$\gS(\q)^\q$, then $\gZ_\times$ is generated by the bi-homogeneous components of $F_1,\dots,F_m$.
\end{prop}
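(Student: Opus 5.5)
The plan is to make explicit how the centres $\cz_t$ ($t\ne0,\infty$) arise from $\gS(\q)^\q$ through a linear change of variables, and then to read off bi-homogeneous components by a Vandermonde argument.

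\emph{Step 1: identify $\cz_t$.} For $s\in\bbk^\times$, let $\varphi_s:\q\to\q$ be the linear map with $\varphi_s|_\h=\mathrm{id}$ and $\varphi_s|_\rr=s\cdot\mathrm{id}$. Using \eqref{eq:2-brak}, compare $\varphi_s$ of a bracket with the bracket of the images.
\begin{itemize}
\item For $x,y\in\h$: $\varphi_s[x,y]=[x,y]_\h$, since $[x,y]\in\h$.
\item For $x\in\h$, $y\in\rr$: $[\varphi_s x,\varphi_s y]=s[x,y]$, while $\varphi_s$ applied to $[x,y]_0+t[x,y]_\infty=[x,y]_\rr+t[x,y]_\h$ gives $s[x,y]_\rr+t[x,y]_\h$.
\end{itemize}
Taking $t=s$, this suggests the isomorphism
\[
\varphi_t[x,y]_t=\varphi_t\bigl([x,y]_0+t[x,y]_\infty\bigr)=[\varphi_t x,\varphi_t y]
\]
from $\q_{(t)}$ to $\q$. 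One should check the rr-component as well: for $x,y\in\rr$, $[x,y]_t=t[x,y]$, so $\varphi_t$ of it equals $t[x,y]$, while $[tx,ty]=t^2[x,y]$. The scaling is therefore not quite consistent, and the correct map is $\psi_t$ with $\psi_t|_\h=t^{-1}$ and $\psi_t|_\rr=\mathrm{id}$ (or an equivalent rescaling). I would fix the scaling by direct verification of all three cases. The upshot is an isomorphism $\q_{(t)}\simeq\q$ given by a map acting by scalars $\lambda(t)$ on $\h$ and $\mu(t)$ on $\rr$, with $\lambda/\mu$ a nonconstant power of $t$. This is exactly the isomorphism of the brackets $\{\,,\,\}_t$ quoted from~\cite{bn}.

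\emph{Step 2: express $\cz_t$ in bi-degree components.} From Step 1, $\cz_t$ is the image of $\gS(\q)^\q$ under the extension of this map to $\gS(\q)$. For homogeneous $F=\sum_i F_{i,d-i}$, the image is
\[
\sum_i \lambda^i\mu^{d-i}F_{i,d-i},
\]
that is, up to an overall scalar, $\sum_i c(t)^iF_{i,d-i}$ with $c(t)=t^{\pm1}$. The same formula applies for any generating set $F_1,\dots,F_m$, so $\cz_t$ is generated by the corresponding images of $F_1,\dots,F_m$.

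\emph{Step 3: inclusion $\gZ_\times\subset$ components.} By Step 2, each generator of each $\cz_t$ is a linear combination of bi-homogeneous components of some $F_j$. Hence $\gZ_\times$ lies in the algebra generated by these components.

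\emph{Step 4: reverse inclusion.} Choose $d+1$ distinct nonzero values $t$, which give distinct values $c(t)$. The Vandermonde matrix in the $c(t)^i$ is then invertible, so each $F_{i,d-i}$ is a linear combination of elements of $\gZ_\times$. This yields the equality. The second statement follows because, by Step 2, generators of $\gS(\q)^\q$ suffice.

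The only delicate point is getting the scaling in Step 1 right so that it is consistent on $\h\times\h$, $\h\times\rr$ and $\rr\times\rr$. The rest is formal.
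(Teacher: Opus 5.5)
Your proposal is correct and follows essentially the same route as the paper, which invokes the rescaling isomorphism $\q_{(t)}\simeq\q$ for $t\ne 0$ together with the Vandermonde argument from \cite{bn}. The one blemish is in Step 1: your first map $\varphi_t$ ($\mathrm{id}$ on $\h$, $t\cdot\mathrm{id}$ on $\rr$) is already a Lie algebra isomorphism $\q_{(t)}\to\q$, since for $x,y\in\rr$ one has $\varphi_t(t[x,y])=t^2[x,y]=[tx,ty]$ (you forgot that $[x,y]\in\rr$ is also rescaled); your replacement $\psi_t=t^{-1}\varphi_t$ only satisfies $\psi_t([x,y]_t)=t[\psi_t x,\psi_t y]$, which is harmless for Poisson centres, and in any case $\cz_t=\varphi_t^{-1}(\gS(\q)^\q)$ is spanned by the elements $\sum_i t^iF_{i,d-i}$ for homogeneous invariants $F$.
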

\begin{proof}
The argument with Vandermonde determinant used in \cite{bn} for a reductive Lie algebra $\g$ goes 
through without changes for arbitrary Lie algebras. 
\end{proof}
\begin{cl}
The bi-homogeneous components of all symmetric invariants of $\q$ commute \wrt\ any bracket in
the family $\{\ ,\,\}_t$ ($t\in\BP$). 
\end{cl}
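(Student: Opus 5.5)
The corollary follows from Proposition~\ref{prop:Z-x} together with the Lenard--Magri principle recalled in Section~\ref{subs:LM}. The plan is first to place all bi-homogeneous components inside $\gZ_\times$, and then to check that $\gZ_\times$ is Poisson-commutative for every bracket $\{\,\,,\,\}_s$ with $s\in\BP$.

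By Proposition~\ref{prop:Z-x}, every bi-homogeneous component $F_{i,d-i}$ of a homogeneous $F\in\gS(\q)^\q$ lies in $\gZ_\times$, the algebra generated by the centres $\cz_t$ with $t\ne 0,\infty$. A non-homogeneous invariant decomposes into homogeneous invariants, since $\gS(\q)^\q$ is graded. So it suffices to show that $\{\cz_t,\cz_{t'}\}_s=0$ for all $t,t'\in\bbk\setminus\{0\}$ and all $s\in\BP$. The bracket is a biderivation, so commutation of generators implies commutation of the whole algebra $\gZ_\times$.

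The key step is the standard Lenard--Magri computation for linear compatible brackets. Take $f\in\cz_t$, $g\in\cz_{t'}$ and $s\in\BP$. For brackets of the form $\{\,\,,\,\}_u=\{\,\,,\,\}_0+u\{\,\,,\,\}_\infty$ there are three cases.
\begin{itemize}
\item If $t\ne t'$: from $\{f,\cdot\}_0=-t\{f,\cdot\}_\infty$ and $\{g,\cdot\}_0=-t'\{g,\cdot\}_\infty$ we get
\[
\{f,g\}_0=-t\{f,g\}_\infty \quad\text{and}\quad \{f,g\}_0=t'\{g,f\}_\infty=-t'\{f,g\}_\infty .
\]
Hence $(t-t')\{f,g\}_\infty=0$, so $\{f,g\}_\infty=0=\{f,g\}_0$. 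It follows that $\{f,g\}_s=0$ for every $s$.
\item If $t=t'$: we have $\{f,g\}_t=0$, and $\{f,g\}_0=-t\{f,g\}_\infty$. Antisymmetry together with the same relation applied with the roles of $f$ and $g$ swapped only reproduces the identity $\{f,g\}_0=-t\{f,g\}_\infty$, so it gives no further information.
\end{itemize}

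The case $t=t'$ is the main obstacle, and it must be handled differently. I would use that $\BP_{\sf reg}\supset\bbk\setminus\{0\}$, so that for generic $t$ the centre $\cz_t$ is contained in the algebra generated by the $\cz_{t''}$ with $t''$ near $t$. Concretely, the brackets $\{\,\,,\,\}_t$ for $t\ne0$ are all isomorphic via the rescaling $x_\h+x_\rr\mapsto x_\h+tx_\rr$. Hence $\cz_t$ is the image of $\cz\gS(\q)$ under this rescaling, and its homogeneous elements are the sums $\sum_i t^{d-i}F_{i,d-i}$, up to the normalisation of the rescaling.

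By the Vandermonde argument behind Proposition~\ref{prop:Z-x}, each bi-homogeneous component lies in the span of finitely many $\cz_{t_k}$ with pairwise distinct $t_k$. Expanding $f$ and $g$ in this way, every cross term pairs elements of $\cz_{t_k}$ and $\cz_{t_m}$. For $t_k\ne t_m$ the first case above shows the bracket vanishes. For $t_k=t_m$ the two elements lie in the same centre $\cz_{t_k}$, so their $\{\,\,,\,\}_{t_k}$-bracket is zero. Their $\{\,\,,\,\}_0$- and $\{\,\,,\,\}_\infty$-brackets also vanish: the bracket is continuous in $t_k$ and vanishes on the dense set where it can be compared with a nearby distinct value, so it vanishes identically.

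A cleaner route is to verify directly that $\{F_{i,d-i},G_{j,e-j}\}_0$ and $\{F_{i,d-i},G_{j,e-j}\}_\infty$ vanish. This would come from applying the first case to the polynomial-in-$t,t'$ identity $(t-t')\{F^{(t)},G^{(t')}\}_\infty=0$, where $F^{(t)}$ denotes the rescaled image of $F$. Since this is a polynomial identity in $t,t'$, it forces $\{F^{(t)},G^{(t')}\}_\infty=0$ identically, including on the diagonal $t=t'$. Comparing coefficients of the monomials $t^at'^b$ then gives $\{F_{i,d-i},G_{j,e-j}\}_\infty=0$, and likewise for $\{\,\,,\,\}_0$, which settles every $s\in\BP$.
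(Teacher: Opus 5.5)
Your proof is correct, and your final paragraph (the ``cleaner route'') is a complete argument on its own. It differs from how the paper obtains the corollary.

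In the paper the corollary is a one-line consequence. By Proposition~\ref{prop:Z-x} the bi-homogeneous components generate $\gZ_\times$, which is generated by the centres $\cz_t$ with $t\in\bbk\setminus\{0\}\subset\BP_{\sf reg}$. The Lenard--Magri principle recalled in Section~\ref{subs:LM} then says that the algebra generated by centres at \emph{regular} parameters is Poisson-commutative for every bracket of the pencil.

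You re-derive that principle instead. The case $t=t'$ you flag is exactly where the general principle needs regularity of $t$. Your middle ``continuity'' paragraph is too vague to stand on its own, but it is also unnecessary: in the Vandermonde interpolation you can choose disjoint sets of nodes for $f$ and for $g$, so the diagonal never occurs.

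Your last route is cleaner still. Take $F^{(t)}=\sum_i t^iF_{i,d-i}\in\cz_t$. (With your rescaling $x_\h+x_\rr\mapsto x_\h+tx_\rr$, the sum $\sum_i t^{d-i}F_{i,d-i}$ actually lies in $\cz_{1/t}$; this is harmless.) The relation $\{F^{(t)},\,\cdot\,\}_0=-t\{F^{(t)},\,\cdot\,\}_\infty$ holds identically in $t$. Hence $(t-t')\{F^{(t)},G^{(t')}\}_\infty=0$ in the domain $\gS(\q)[t,t']$, and comparing coefficients gives $\{F_{i,d-i},G_{j,e-j}\}_0=\{F_{i,d-i},G_{j,e-j}\}_\infty=0$.

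This is elementary and self-contained: it needs neither Proposition~\ref{prop:Z-x} nor any regularity. It does, however, rely on the rescaling symmetry special to this pencil. The paper's route is shorter and conceptual. The general principle it uses also gives commutativity of all of $\cz_0$ and $\cz_\infty$ when $0,\infty\in\BP_{\sf reg}$; that matters for $\gZ_{\lg\h,\rr\rg}$, though not for this corollary.
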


\begin{rmk}      \label{rem:raznye-dopoln}
It can happen that, for a given $\h\subset\q$, there are different complementary subalgebras $\rr$ and 
$\rr'$. Then the bi-homogeneous components of $F\in\gS(\q)$ and contractions $\rr\ltimes\h^{\sf ab}$, 
$\rr'\ltimes\h^{\sf ab}$ can be essentially different. This provides different \PC\ subalgebras 
$\gZ_{\lg\h,\rr\rg}$ and $\gZ_{\lg\h,\rr'\rg}$. For instance, if $\h=\be$ in a reductive $\g$, then one can 
take $\ut_-$ or $\g_0$ as complementary subalgebras to $\be$ (cf. Introduction).
\end{rmk}

\subsection{On the numbers $s_0$ and $s_\infty$} 
\label{sub:more}
From now on, we deal with splittings of a reductive Lie algebra $\g$. Let $\g=\h\oplus\rr$ be an arbitrary 
splitting, i.e., $\h$ and $\rr$ are not necessarily spherical. Recall that $\h=\Lie H$ and $\rr=\Lie R$. 
For  $\g^*$, we get the induced splitting $\g^*=\h^*\oplus\rr^*$, where $\h^*=\Ann(\rr)$ 
and $\rr^*=\Ann(\h)$. Therefore, if $\g_{(0)}$ is considered, then $\rr^*\subset\g_{(0)}^*$ is regarded as 
$H$-module $(\g/\h)^*$. Likewise, $\h^*\subset \g_{(\infty)}^*$ is regarded as $R$-module $(\g/\rr)^*$.

Set $s_0:=\trdeg \bbk((\g/\h)^*)^H$ and $s_\infty:=\trdeg \bbk((\g/\rr)^*)^R$. 

\begin{thm}              \label{sum-r}
{\sf (i)} For any splitting $\g=\h\oplus\rr$, we have $s_0+s_\infty\ge \ell$. 
\\[.2ex]
{\sf (ii)} If $s_0+s_\infty = \ell$, then 
\begin{itemize}
\item the splitting in question is non-degenerate (i.e.,  $\h$ and $\rr$ are spherical); 
\item $\rr^*\cap\g^*_{\sf reg}\ne\varnothing$ \ and \ $\h^*\cap\g^*_{\sf reg}\ne\varnothing$;
\item $s_0=\dim\rr-\dim\ut$ \ and \ $s_\infty=\dim\h-\dim\ut$.
\end{itemize}
\end{thm}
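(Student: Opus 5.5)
The plan is to bound the generic dimensions of $H$-orbits in $\rr^*=\Ann(\h)$ and of $R$-orbits in $\h^*=\Ann(\rr)$ using isotropy with respect to the Kirillov form on coadjoint $G$-orbits. Put $N:=\dim\ut$, so that $\dim\g-\ell=2N$ and every coadjoint $G$-orbit has dimension at most $2N$.

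For $\xi\in\rr^*=\Ann(\h)$ and $x,y\in\h$ we have $(x{\cdot}\xi)(y)=\pm\xi([x,y])=0$, because $[x,y]\in\h$. Hence $\h{\cdot}\xi\subset\rr^*$, so $\rr^*$ is $H$-stable; this is just the $H$-module $(\g/\h)^*$. The same identity shows more. Inside $T_\xi(G\xi)=\g{\cdot}\xi$, the Kirillov form is $\omega_\xi(x{\cdot}\xi,y{\cdot}\xi)=\xi([x,y])$. This vanishes for $x,y\in\h$, so $\h{\cdot}\xi=T_\xi(H\xi)$ is an isotropic subspace of the symplectic space $T_\xi(G\xi)$. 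Therefore
\[
\dim H\xi\le \tfrac12\dim G\xi\le N \qquad\text{for all } \xi\in\rr^*.
\]
By Rosenlicht's theorem, $s_0=\dim\rr-\max_{\xi\in\rr^*}\dim H\xi\ge\dim\rr-N$. Symmetrically, $s_\infty\ge\dim\h-N$. Adding gives $s_0+s_\infty\ge\dim\g-2N=\ell$, which proves {\sf (i)}.

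For {\sf (ii)}, assume $s_0+s_\infty=\ell$. Then both inequalities above must be equalities, which gives $s_0=\dim\rr-N$ and $s_\infty=\dim\h-N$ (the third bullet). It also shows that a generic $\xi\in\rr^*$ satisfies $\dim H\xi=N$. Since $\dim H\xi\le\frac12\dim G\xi\le N$, this forces $\dim G\xi=2N$, i.e. $\xi\in\g^*_{\sf reg}$. So $\rr^*\cap\g^*_{\sf reg}\ne\varnothing$, and likewise $\h^*\cap\g^*_{\sf reg}\ne\varnothing$ (the second bullet).

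For non-degeneracy I will use Ra\"is' formula for the index of a semi-direct product $\h\ltimes V$ with $V$ abelian:
\[
\ind(\h\ltimes V)=\trdeg\bbk(V^*)^H+\ind\h^\xi, \qquad \xi\in V^* \text{ generic}.
\]
Here $\h^\xi$ is the stabiliser of $\xi$ in $\h$. I apply it with $V=\rr^{\rm ab}\simeq\g/\h$ and $V^*=\rr^*$. The first term is $s_0$. For the second, take $\xi$ generic so that $\dim\h{\cdot}\xi=N$; then $\dim\h^\xi=\dim\h-N=s_\infty$. Using the trivial bound $\ind\h^\xi\le\dim\h^\xi$, we obtain $\ind\g_{(0)}\le s_0+s_\infty=\ell$. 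Combined with $\ind\g_{(0)}\ge\ell$, which holds because $\g_{(0)}$ is a contraction of $\g$, this gives $\ind\g_{(0)}=\ell$. The same argument with the roles of $\h$ and $\rr$ exchanged gives $\ind\g_{(\infty)}=\ell$. So the splitting is non-degenerate, and by {\sf (i)} of the Introduction $\h$ and $\rr$ are spherical.

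The step I expect to need care is this last one. One must make sure that Ra\"is' formula applies to this module and that the stabiliser there is the one computed from the coadjoint action of $\h$ on $\Ann(\h)$. The point to check is that the $\h$-action on $(\g/\h)^*$ in $\g_{(0)}$ agrees with the restriction of the coadjoint action to $\rr^*=\Ann(\h)\subset\g^*$; this holds because ${\sf pr}_\rr$ only discards $\h$-components, which $\Ann(\h)$ does not see. One must also check that "generic" can be taken simultaneously for the Rosenlicht count and for the orbit-dimension condition. Both are nonempty open conditions in the irreducible space $\rr^*$, so they are compatible.
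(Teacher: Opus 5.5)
Your proof is correct, and it reorganises the paper's argument rather than reproducing it. In {\sf (i)} the paper writes $2\dim H\xi=\rk\pi_0(\xi)$ for $\xi\in\rr^*$. It then bounds this by the generic rank of $\pi$ through the contraction inequality $\rk\pi_0\le\rk\pi$. You instead bound $2\dim H\xi$ pointwise by $\dim G\xi$, using the isotropy of $\h{\cdot}\xi$ for the Kirillov form. Your inequality is exactly the block-matrix estimate $\rk\pi(\xi)\ge 2\rk A=\rk\pi_0(\xi)$, which the paper uses only in {\sf (ii)} to show $\xi\in\g^*_{\sf reg}$. Using it from the start gives the separate bounds $s_0\ge\dim\rr-\dim\ut$ and $s_\infty\ge\dim\h-\dim\ut$ directly. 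The second and third bullets of {\sf (ii)} then follow at once, and no contraction argument is needed in {\sf (i)}.

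For non-degeneracy, the paper's route is cheaper than yours. Ra\"is' formula is valid here, and your checks on the $\h$-module structure and on simultaneous genericity are fine, but the formula is unnecessary. For $\xi\in\rr^*$, the stabiliser $\g_{(0)}^\xi=\ker\pi_0(\xi)$ has dimension $\dim\g-2\dim\h{\cdot}\xi$. Under the hypothesis of {\sf (ii)}, for generic $\xi$ this equals $\dim\g-2\dim\ut=\ell$, so $\ind\g_{(0)}\le\ell$ immediately. Your Ra\"is bound $s_0+\dim\h^\xi=(\dim\rr-\dim\h{\cdot}\xi)+(\dim\h-\dim\h{\cdot}\xi)$ is literally this same number. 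In both versions you still need $\ind\g_{(0)}\ge\ell$ from the contraction to conclude equality.
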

\begin{proof}
{\sf (i)}  Let $\pi_0$ be the Poisson tensor of $\g_{(0)}^*$. For $\xi\in\rr^*\in\g_{(0)}^*$, consider the 
bilinear 2-form $\pi_0(\xi)$  on $\g_{(0)}$. By definition,   
\[  
       \pi_0(\xi)(x,y)=\xi([x,y]_0) \ \text{ for } \ x,y\in\g_{(0)}.
\]
Since $[\h,\h]_{0}\subset\h$ and $[\rr,\rr]_{0}=0$, the bilinear form $\pi_0(\xi)$ vanishes on 
$\h\times\h$ and $\rr\times\rr$. Hence it provides a pairing between $\h$ and $\rr$.
For a basis of $\g$ adapted to the sum $\g=\h\oplus\rr$, the matrix of $\pi_0(\xi)$ has a block 
structure $\pi_0(\xi)=\begin{pmatrix} 0 & A \\ -A^t & 0 \end{pmatrix}$, where the rectangular matrix
$A$ corresponds to the pairing of $\h$ and $\rr$. Here $\rk A=\dim H{\cdot}\xi$, hence
$\rk\pi_0(\xi)=2\dim H{\cdot}\xi$. By Rosenlicht's theorem~\cite[\S\,2.3]{VP}, if $\xi$ is generic, then 
$
    s_0=\dim\rr-\dim H\xi=\dim\rr-\frac{1}{2}\rk\pi_0(\xi)$.
Likewise, using $\g_{(\infty)}^*$ and the Poisson tensor $\pi_\infty$, 
we obtain $s_\infty=\dim\h-\frac{1}{2}\rk\pi_\infty(\eta)$ for a generic point $\eta\in\h^*$.  

Let $\pi$ be the Poisson tensor of $\g^*$. Since $\g_{(0)}$ and $\g_{(\infty)}$ are contractions of $\g$,
we have $\rk\pi_0\le \rk\pi$ and $\rk\pi_\infty\le \rk\pi$. Therefore, for generic points $\xi$ and $\eta$, we
obtain
\begin{multline}    \label{eq:s-0+s-inf}
   s_0+s_\infty=\dim\g-\frac{1}{2}(\rk\pi_0(\xi)+\rk\pi_\infty(\eta)) \ge \\ 
\dim\g -\frac{1}{2}(\rk\pi_0+\rk\pi_\infty)\ge\dim\g - \rk\pi=\ell .
\end{multline}

{\sf (ii)} Suppose that $s_0+s_\infty=\ell$.
By Eq.~\eqref{eq:s-0+s-inf},    
this equality  is equivalent to that 
$\rk\pi_0(\xi)=\rk\pi_\infty(\eta)=\dim\g-\ell$. In particular, $\rk\pi_0=\rk\pi_\infty=\rk\pi$, i.e.,
$\ind\g_{(0)}=\ind\g_{(\infty)}=\ind\g$ and the subalgebras $\h$ and $\rr$ are spherical, 
see~\cite[Sect.\,2]{bn}.  

Now, consider $\xi\in \rr^*$ as element of $\g^*$. For the same adapted basis of $\g$ as above, the 
matrix of $\pi(\xi)$ is  $\begin{pmatrix} 0 & A \\ -A^t & C \end{pmatrix}$, where 
$C$ is the matrix of $\pi(\xi)|_{\rr\times\rr}$. Note that if $x\in\h$, $y\in\rr$, and $\xi\in\rr^*$, then
$\xi([x,y]_0)=\xi([x,y])$. Hence the submatrix $A$ is the same in $\pi_0(\xi)$ and $\pi(\xi)$, i.e.,
$\pi_0(\xi)$ is obtained from $\pi(\xi)$ by replacing $C$ with zero. Therefore, 
$\rk\pi(\xi)\ge 2\rk A= \rk\pi_0(\xi)$. This implies that $\rk\pi(\xi)=\rk\pi_0(\xi)=\dim\g-\ell$ and hence
$\xi\in\g^*_{\sf reg}$. Similarly, $\eta\in\g^*_{\sf reg}$. 

Since $s_0=\dim\rr-\frac{1}{2}\rk\pi_0(\xi)$ and $\rk\pi_0(\xi)=\dim\g-\ell=2\dim\ut$ for 
$\xi\in\rr^*\cap\g^*_{\sf reg}$, the last assertion for $s_0$ follows (and likewise for $s_\infty$).
\end{proof}

\subsection{More results on $s_0$ and $s_\infty$}    
\label{subs:pro-r-and-s}
For any homogeneous space $G/H$, the linear $H$-action on $\Ann(\h)=\h^\perp\subset\g^*$ is called
the {\it isotropy representation} of $H$. Let $c(G/H)$ and $r(G/H)$ denote the {\it complexity\/} and 
{\it rank\/} of the $G$-variety $G/H$, respectively. We refer to \cite{p99} for generalities on $c(\cdot)$
and $r(\cdot)$. It is known that
\[
      \trdeg\bbk(\h^\perp)^H=2c(G/H)+r(G/H) ,
\]
see~\cite{kn90, p90, T}. In particular, if $H$ is spherical, then $\trdeg\bbk(\h^\perp)^H=r(G/H)$. There is a
dense open subset $\Omega_H\subset\h^\perp$ such that the function
\[ 
         (x\in\Omega_H) \mapsto (\dim\h^x,\ind\h^x)\subset \BN\times\BN
\] 
is constant. The assertion on $\dim\h^x$ is a standard invariant-theoretic result (see 
e.g.~\cite[\S\,1.4]{VP}, whereas the fact on $\ind\h^x$ is proved in \cite{R}. Let us write $\h_\star$ for any stabiliser $\h^y$ with $y\in\Omega_H$. Then $\dim\h_\star=\min_{x\in\h^\perp}\dim\h^x$.
Using Ra\"is' formula for the index of semi-direct products~\cite{R} and
known results on the complexity and rank, we derive more properties for $s_0+s_\infty$ in the setting of
splittings of $\g$. As before, $\g=\h\oplus\rr$, $\g_{(0)}=\h\ltimes\rr^{\sf ab}$, and
$\g_{(\infty)}=\rr\ltimes\h^{\sf ab}$. Since $\h^\perp\simeq (\g/\h)^*$ and $\rr^\perp=(\g/\rr)^*$, we have
\beq       \label{eq:s-c-r}
      s_0=2c(G/H)+r(G/H)  \ \ \& \ \  s_\infty=2c(G/R)+r(G/R) .
\eeq

\begin{thm}   \label{thm:main-r&c}
Let $\g=\h\oplus\rr$ be an arbitrary splitting. Then
\begin{itemize}
\item[\sf (i)] \ $\dim\h_\star+\dim\rr_\star=s_0+s_\infty\ge\rk\g$;
\item[\sf (ii)] \  $\ind\h_\star=\rk\g-r(G/H)$ \ and \ $\ind\rr_\star=\rk\g-r(G/R)$;
\item[\sf (iii)] \  $\bb(\h_\star)+\bb(\rr_\star)=\rk\g+c(G/H)+c(G/R)$.
\end{itemize}
\begin{proof}
{\sf (i)} By Rosenlicht's theorem~\cite[\S\,2.3]{VP}, 
\[
  s_0=\dim\h^\perp-\dim\h+\dim\h_\star \ \text{ and } \ s_\infty=\dim\rr^\perp-\dim\rr+\dim\rr_\star .
\]
By Theorem~\ref{sum-r}{\sf (i)}, taking the sum yields the assertion.
 
{\sf (ii)} By Ra\"is' formula~\cite{R}, one has
$\ind\g_{(0)}=\trdeg\bbk(\h^\perp)^H+\ind\h_\star=s_0+\ind\h_\star$. On the other hand, $\ind\g_{(0)}=\rk\g+2c(G/H)$~\cite{bn,T}. Comparing the two expressions for $\ind\g_{(0)}$ and using 
Eq.~\eqref{eq:s-c-r}, we obtain $\ind\h_\star=\rk\g-r(G/H)$. The argument for $\ind\rr_\star$ is the same.

{\sf (iii)} Recall that $\bb(\q)=\frac{1}{2}(\dim\q+\ind\q)$. Combining {\sf (i)} and {\sf (ii)}, we get
\[
       \bb(\h_\star)+\bb(\rr_\star)=\rk\g+\frac{1}{2}(s_0-r(G/H))+\frac{1}{2}(s_0-r(G/R)) ,
\]
which is exactly what is required.  
\end{proof}
\end{thm}
\begin{cl}   \label{cl:nod-degener}
For a non-degenerate splitting of $\g$, the following conditions are equivalent:
\begin{enumerate}
\item $s_0+s_\infty=\rk\g$;
\item $r(G/H)+r(G/R)=\rk\g$;
\item $\h_\star$ and\/ $\rr_\star$ are abelian Lie algebras.
\end{enumerate}
\end{cl}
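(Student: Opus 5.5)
For a non-degenerate splitting, both $\h$ and $\rr$ are spherical, so $c(G/H)=c(G/R)=0$. By \eqref{eq:s-c-r} this gives $s_0=r(G/H)$ and $s_\infty=r(G/R)$. The equivalence (1)$\Leftrightarrow$(2) is then immediate.

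For (2)$\Leftrightarrow$(3), I would work with Theorem~\ref{thm:main-r&c}. Part~(ii) gives $\ind\h_\star=\rk\g-r(G/H)$ and $\ind\rr_\star=\rk\g-r(G/R)$. Part~(i) gives $\dim\h_\star+\dim\rr_\star=s_0+s_\infty=r(G/H)+r(G/R)$. Adding the two index formulas and comparing with the dimension sum yields
\[
(\dim\h_\star-\ind\h_\star)+(\dim\rr_\star-\ind\rr_\star)=2\bigl(r(G/H)+r(G/R)-\rk\g\bigr).
\]
Equivalently, by part~(iii) with vanishing complexities, $\bb(\h_\star)+\bb(\rr_\star)=\rk\g$. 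Combining this with $\dim\h_\star+\dim\rr_\star=r(G/H)+r(G/R)$ gives the same relation.

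Any Lie algebra $\q$ satisfies $\dim\q-\ind\q\ge 0$, with equality if and only if every coadjoint orbit is a point, i.e.\ $\q$ is abelian. Hence the left-hand side of the display is a sum of two nonnegative terms, and it vanishes exactly when both $\h_\star$ and $\rr_\star$ are abelian. So (3) holds if and only if $r(G/H)+r(G/R)=\rk\g$, which is (2).

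There is no real obstacle here. The only point needing care is the identification $\dim\q=\ind\q\Leftrightarrow\q$ abelian. It holds because $\ind\q=\min_\xi\dim\q^\xi=\dim\q$ forces $\xi([x,y])=0$ for all $\xi$, hence $[\q,\q]=0$.
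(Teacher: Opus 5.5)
Your proposal is correct and is essentially the paper's own argument. Both rest on the identities of the preceding theorem, where the non-degenerate case gives $c(G/H)=c(G/R)=0$, hence $s_0=r(G/H)$, $s_\infty=r(G/R)$ and $\bb(\h_\star)+\bb(\rr_\star)=\rk\g$. Both then use $\ind\q\le\dim\q$, with equality exactly for abelian $\q$. The only cosmetic difference is that you write the total defect $(\dim\h_\star-\ind\h_\star)+(\dim\rr_\star-\ind\rr_\star)$ directly as $2\bigl(r(G/H)+r(G/R)-\rk\g\bigr)$, whereas the paper uses the inequality $\dim\h_\star+\dim\rr_\star\ge\rk\g$ to get $\ind\h_\star+\ind\rr_\star\le\rk\g$.
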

\begin{proof}
In the non-degenerate case, one has $\bb(\h_\star)+\bb(\rr_\star)=\rk\g$. Since 
$\dim\h_\star+\dim\rr_\star\ge\rk\g$, we obtain $\ind\h_\star+\ind\rr_\star\le \rk\g$. It remains to observe that $\ind\h_\star\le \dim\h_\star$ and the equality exactly means that $\h_\star$ is abelian.
\end{proof}

\begin{rmk}      \label{rem:conj-equal}
It is likely that $s_0+s_\infty=\ell$ for all non-degenerate splittings of $\g$. At least, this is true for all 
examples in~\cite{bn} and the examples considered in Sections~\ref{sect:horo}-\ref{sect:ohne} below. We
can also prove this, if both $G/H$ and $G/R$ are quasiaffine.
\end{rmk}

\subsection{The non-degenerate case}    
\label{subs:sph}
Suppose now that both subalgebras $\h$ and $\rr$ are spherical. Hence 
$\ind\g_{(0)}=\ind\g_{(\infty)}=\ell$ and $\gZ_{\lg\h,\rr\rg}$ is the algebra generated by all centres $\cz_t$, 
$t\in\BP$. The order of summands in the sum $\g=\h\oplus\rr$ is assumed to be fixed. That is, if 
$F\in\gS(\g)$ is homogeneous, then $F_\bullet$ (resp. $F^\bullet$) is the bi-homogeneous component of 
$F$ of maximal degree with respect to the first (resp. second) summand.

\begin{thm}[{\cite[Sect.\,3]{bn}}]            \label{thmZ}
For a non-degenerate splitting, $\gZ_{\lg\h,\rr\rg}$ is generated by $\cz_0$, $\cz_\infty$, and 
all bi-homogeneous components of $F_1,\dots,F_\ell$, i.e.,
\beq   \label{eq:bihom}
   \{(F_j)_{i,d_j-i} \mid  j=1,\dots,\ell \ \& \ i=0,1,\dots,d_j\}.  
\eeq
Moreover, $\trdeg\gZ_{\lg\h,\rr\rg}=\bb(\g)$. 
\end{thm}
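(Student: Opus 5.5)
The plan has two parts: generation of $\gZ_{\lg\h,\rr\rg}$, and then its transcendence degree.

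\emph{Generation.} By the non-degeneracy assumption, $\BP_{\sf reg}=\BP$, so $\gZ_{\lg\h,\rr\rg}$ is by definition generated by all $\cz_t$ with $t\in\BP$. It therefore splits into $\cz_0$, $\cz_\infty$, and $\gZ_\times$ (the algebra generated by the $\cz_t$ with $t\ne0,\infty$). Proposition~\ref{prop:Z-x} identifies $\gZ_\times$ with the algebra generated by the bi-homogeneous components of a generating set of $\gS(\g)^\g$; applied to the Hilbert basis $F_1,\dots,F_\ell$, this gives exactly the set~\eqref{eq:bihom}. To recall why: $\{\,,\,\}_t$ for $t\ne0,\infty$ is obtained from $\{\,,\,\}$ by the linear automorphism $\varphi_t$ of $\g$ acting as $1$ on $\h$ and $t$ on $\rr$, up to a scalar. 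Hence $\cz_t$ is generated by the polynomials $\varphi_t(F_j)=\sum_i t^{d_j-i}(F_j)_{i,d_j-i}$. Letting $t$ range over $d_j+1$ distinct values and inverting a Vandermonde matrix recovers each bi-homogeneous component. This proves the first claim.

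\emph{Transcendence degree, upper bound.} Since $\gZ_{\lg\h,\rr\rg}$ is Poisson-commutative, $\trdeg\le\bb(\g)$ by the bound recalled in Section~\ref{sect:prelim}. It remains to prove the lower bound, and I will do this via differentials at a generic point $\xi\in\g^*$. Set $V=\sum_{t\in\BP}\ker\pi_t(\xi)\subset\g$, where $\pi_t=\pi_0+t\pi_\infty$. For $\xi$ generic, each $\cz_t$ ($t$ regular) has differentials spanning $\ker\pi_t(\xi)$. Indeed $\ind\g_{(t)}=\ell$ for all $t$, and for $t\ne0,\infty$ the centre $\cz_t\simeq\gS(\g)^\g$ is generated by $\ell$ invariants whose differentials at regular points span the stabiliser (Kostant). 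For $t=0,\infty$ I would use that $\trdeg\cz_0=\trdeg\cz_\infty=\ell$, which follows from Lemma~\ref{bullet} since $F_j^\bullet\in\cz_0$. This yields at least $\ell$ independent differentials lying in $\ker\pi_0(\xi)$, hence spanning it for generic $\xi$. Consequently $\dim d_\xi\gZ\ge\dim V$.

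\emph{Lower bound, the main obstacle.} The key step is to show $\dim V=\bb(\g)$. I would invoke the standard theory of pencils of skew forms (the Kronecker decomposition, as in Bolsinov's completeness criterion): if all forms $\pi_t(\xi)$, $t\in\BP$, have the same rank $\dim\g-\ell$, then the pencil consists only of Kronecker blocks, and $\sum_t\ker\pi_t(\xi)$ has dimension $\frac12(\dim\g+\ell)$. The delicate point is ensuring that the rank is constant in $t$ for a \emph{single} generic $\xi$, including $t=0,\infty$. For that, the set of $\xi$ with $\rk\pi_t(\xi)<\dim\g-\ell$ for some $t$ must be shown to be a proper closed subset. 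I would handle this by noting that, for each $t$, the bad locus is a proper closed subset since $\ind\g_{(t)}=\ell$, and that the union over $t\in\BP$ is contained in a proper closed subset. For the latter, I would use that for $t\ne0,\infty$ the bad loci are images of $\g^*_{\sf sing}$ (codimension $3$) under the maps $\varphi_t$, so their union over a one-parameter family has codimension at least $2$; the loci for $t=0$ and $t=\infty$ are proper closed subsets by non-degeneracy. Choosing $\xi$ outside all of these gives $\dim V=\bb(\g)$, and hence $\trdeg\gZ_{\lg\h,\rr\rg}=\bb(\g)$.
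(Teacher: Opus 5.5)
Your proof is correct and follows essentially the same route as the argument the paper cites from \cite[Sect.\,3]{bn}. Generation comes from the isomorphisms $\g_{(t)}\simeq\g$ ($t\ne0,\infty$) together with the Vandermonde argument of Proposition~\ref{prop:Z-x}, and $\trdeg\gZ_{\lg\h,\rr\rg}=\bb(\g)$ comes from the Kronecker (Bolsinov) description of a constant-rank pencil, using $\codim\g^*_{\sf sing}=3$ to find a point $\xi$ regular for every bracket $\{\ ,\,\}_t$, $t\in\BP$; your separate treatment of $t=0,\infty$ via Lemma~\ref{bullet} is harmless but unnecessary, because for a Kronecker pencil the kernels $\ker\pi_t(\xi)$ depend polynomially on $t$, so the sum over $t\in\bbk^\times$ alone already has dimension $\bb(\g)$ (which also gives Lemma~\ref{lm:trdeg} directly).
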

Although $\gZ_\times$ 
can be a proper subalgebra of $\gZ_{\lg\h,\rr\rg}$, it appears to be sufficiently large.

\begin{lm}          \label{lm:trdeg}
One has $\trdeg\gZ_\times=\bb(\g)$.
\end{lm}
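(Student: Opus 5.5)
We need to show $\trdeg\gZ_\times=\bb(\g)$ for a non-degenerate splitting. Since $\gZ_\times\subset\gZ_{\lg\h,\rr\rg}$ and the latter has transcendence degree $\bb(\g)$ by Theorem~\ref{thmZ} (in fact any \PC\ subalgebra has $\trdeg\le\bb(\g)$), the upper bound is immediate. The task is the lower bound.

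The plan is to run the standard dimension-of-differentials argument from \cite[Thm.\,3.2]{bn} using only the centres $\cz_t$ with $t\in\bbk\setminus\{0\}$. By Proposition~\ref{prop:Z-x}, $\gZ_\times$ contains $\cz_t$ for all $t\ne0,\infty$. Fix a generic $\xi\in\g^*$ and consider the subspace
\[
V(\xi)=\sum_{t\ne 0,\infty} d_\xi\cz_t\subset\g .
\]
For generic $\xi$ and $t\in\bbk\setminus\{0\}$, the point $\xi$ is regular for $\g_{(t)}\simeq\g$. Then $d_\xi\cz_t=\ker\pi_t(\xi)$, where $\pi_t=\pi_0+t\pi_\infty$. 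This uses that $\cz_t$ is generated by transported invariants of $\g$, and that for reductive $\g$ the differentials of a Hilbert basis span $\g^\xi$ at regular points (Kostant). So
\[
\trdeg\gZ_\times\ge\dim\sum_{t\in\bbk^\times}\ker(\pi_0(\xi)+t\pi_\infty(\xi)).
\]

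Next I would invoke the linear-algebra lemma on pencils of skew forms (Bolsinov's lemma, as used in \cite{bn}). Suppose $\rk(\pi_0+t\pi_\infty)(\xi)$ equals the maximal value $\dim\g-\ell$ for all $t\in\BP$. Then the sum of the kernels over any infinite (even cofinite) set of $t$ has dimension $\frac12(\dim\g+\ell)=\bb(\g)$. The point to stress is that the lemma only needs the kernels for infinitely many regular $t$. The kernels at $t=0$ and $t=\infty$ lie in the closure of the family and are contained in the span of the others, because the kernel bundle over $\BP_{\sf reg}=\BP$ is a vector bundle whose sections are polynomial in $t$. Omitting the two points $0,\infty$ therefore does not decrease the span.

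The hypothesis of the lemma is where non-degeneracy enters. We have $\ind\g_{(0)}=\ind\g_{(\infty)}=\ell$, and all $\g_{(t)}$ with $t\ne0,\infty$ are isomorphic to $\g$. Hence for generic $\xi$ the form $\pi_t(\xi)$ has rank $\dim\g-\ell$ for every $t\in\BP$. The reason is that the locus in $\g^*\times\BP$ where the rank drops is a proper closed subset. Its projection to $\g^*$ is a proper closed subset, since each fibre over fixed $t$ has codimension $\ge1$ and $\BP$ is one-dimensional; more carefully, the drop locus for fixed $t$ has codimension $\ge 3$ in $\g^*$ for reductive $\g$, and one checks codimension $\ge 2$ at $t=0,\infty$ or argues directly as in \cite{bn}.

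The main obstacle is this genericity step: ensuring a single $\xi$ for which all $t\in\BP$ are simultaneously regular. If that fails, I would instead use a weaker version of the lemma. It suffices that the pencil has constant rank away from finitely many $t$, and that $\xi$ is regular at $t=0,\infty$; the kernels at the finitely many bad $t$ are still limits. Then I would verify directly that $\dim\sum_{t\ne0,\infty}\ker\pi_t(\xi)=\bb(\g)$ via the Jordan–Kronecker normal form of the pencil. Non-degeneracy forces every block to be Kronecker, and the span of the Kronecker kernels over any infinite set of $t$ equals the full $\bb(\g)$-dimensional space.
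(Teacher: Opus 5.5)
Your main line is correct, but it takes a different route from the paper. The paper never looks at the pencil. It takes $\trdeg\gZ_{\lg\h,\rr\rg}=\bb(\g)$ from Theorem~\ref{thmZ} and shows that the remaining generators $\cz_0,\cz_\infty$ are algebraic over $\gZ_\times$. Every bi-homogeneous component of every invariant lies in $\gZ_\times$, so $\cz\gS(\g)^\bullet\subset\gZ_\times\cap\cz_0$. Lemma~\ref{bullet} then gives $\ell=\trdeg\cz\gS(\g)^\bullet\le\trdeg\cz_0\le\ind\g_{(0)}=\ell$, and symmetrically for $\cz_\infty$.

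You instead rerun the Bolsinov-type pencil argument with $t$ restricted to $\bbk^\times$. At a point $\xi$ regular for the whole pencil, the kernels at $t=0,\infty$ are limits of the others, so nothing is lost. The comparison is as follows.
\begin{itemize}
\item The paper's proof is short and purely algebraic given Theorem~\ref{thmZ}. It needs neither Kostant's theorem nor a point regular for all $t\in\BP$ at once, and it yields the extra fact that $\cz_0$ is algebraic over $\cz\gS(\g)^\bullet$.
\item Your proof does not depend on Theorem~\ref{thmZ}; it reproves the lower bound from scratch. It also gives a pointwise strengthening: at generic $\xi$ the differentials of $\gZ_\times$ alone span the whole $\bb(\g)$-dimensional space $\sum_{t\in\BP}\ker\pi_t(\xi)$, in your notation $\pi_t=\pi_0+t\pi_\infty$.
\end{itemize}

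Your genericity paragraph needs some tidying.
\begin{itemize}
\item The first justification (each fibre has codimension $\ge1$ and $\dim\BP=1$) proves nothing. It only bounds the dimension of the rank-drop locus in $\g^*\times\BP$ by $\dim\g^*$, so that locus could still dominate $\g^*$.
\item The second justification works, and needs less than you state. For $t\in\bbk^\times$, the drop locus is the image of $\g^*_{\sf sing}$ under the dual of the isomorphism $\g_{(t)}\to\g$ that is the identity on $\h$ and multiplication by $t$ on $\rr$. Since $\g^*_{\sf sing}$ has codimension $3$, the union over $t\in\bbk^\times$ has codimension $\ge2$ in $\g^*$.
\item At the two points $t=0,\infty$, codimension $\ge1$ suffices, i.e.\ exactly $\ind\g_{(0)}=\ind\g_{(\infty)}=\ell$. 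Codimension $\ge2$ there is not needed and fails in general, e.g.\ for $\be\ltimes\ut_-^{\sf ab}$ outside type $\eus A$.
\item Your fallback is wrong as stated. Regularity at $0$ and $\infty$ does not rule out a rank drop at some $t_0\in\bbk^\times$. The resulting Jordan block would make $\dim\sum_{t\ne t_0}\ker\pi_t(\xi)<\bb(\g)$. With the corrected genericity argument the fallback is not needed.
\end{itemize}
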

\begin{proof}
As every $F\in\gS(\g)^\g$ is a polynomial in $F_1,\dots,F_\ell$, any bi-homogeneous component 
$F_{s,t}$ of $F$ is a polynomial in the generators \eqref{eq:bihom},  
i.e., $F_{s,t}\in \gZ_\times$. In particular,  $\cz\gS(\g)^\bullet\subset\gZ_\times$. We also have
$\cz\gS(\g)^\bullet\subset\cz_0$ and $\ind\g_{(0)}=\ell$.
Using Lemma~\ref{bullet}, we then obtain
\[
    \ell=\trdeg\cz\gS(\g)^\bullet\le \trdeg\cz_0\le\ind\g_{(0)}=\ell .
\]
Hence the extension $\cz\gS(\g)^\bullet\subset\cz_0$ is algebraic and each element of  $\cz_0$ is 
algebraic over the field of fractions of $\gZ_\times$. Changing the roles of $\h$ and $\rr$, we arrive at the 
same conclusion for $\cz_\infty$.
Thus, $\trdeg\gZ_\times=\trdeg\gZ_{\lg\h,\rr\rg}=\bb(\g)$. 
\end{proof}

In general, {\sf g.g.s.} for $\h$ and $\rr$ can be quite different (if there are any). But, in the happy occasion that $\h$ and $\rr$ share the same {\sf g.g.s.}, we get several nice properties.

\begin{prop}           \label{prop:ggs2}
Suppose that $F_1,\ldots,F_\ell$ is a {\sf g.g.s.} for both $\h$ and $\rr$. Then $\gZ_\times$ is a polynomial ring that is freely generated by the bi-homogeneous components $(F_j)_{i,d_j-i}$ ($1\le j\le \ell$) such that $d_j-\deg_{\h} F_j^\bullet \le i \le \deg_{\rr}(F_j)_\bullet$. The total number of such components equals $\bb(\g)$.
\end{prop}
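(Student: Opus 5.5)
The plan is to prove the statement by counting: we show that $\gZ_\times$ is generated by the listed components and that their number is exactly $\trdeg\gZ_\times=\bb(\g)$. A finitely generated algebra with $N$ generators and transcendence degree $N$ is freely generated by them, since the generators must then be algebraically independent.

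\emph{Generation.} By Proposition~\ref{prop:Z-x}, $\gZ_\times$ is generated by all bi-homogeneous components $(F_j)_{i,d_j-i}$, $1\le j\le\ell$, $0\le i\le d_j$. Here $i$ is the $\h$-degree. By definition, $F_j^\bullet$ is the nonzero component of minimal $\h$-degree, which is $d_j-\deg_\rr F_j^\bullet$. Likewise $(F_j)_\bullet$ is the nonzero component of maximal $\h$-degree, which is $\deg_\h F_j$. Hence every component with $i$ outside the interval $[\,d_j-\deg_\rr F_j,\ \deg_\h F_j\,]$ vanishes, and can be dropped from the generating set. This interval is the range in the statement, read with the $\rr$-degree of $F_j^\bullet$ in the lower bound.

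\emph{Counting.} For each $j$, the number of indices $i$ in that range is $\deg_\h F_j+\deg_\rr F_j-d_j+1$. We apply Theorem~\ref{thm:kot14} to the reductive algebra $\g$; hypotheses (a)--(c) hold by the Remark following it. For the subalgebra $\h$ with complement $\me=\rr$, the g.g.s.\ hypothesis and part (ii) give $\sum_j\deg_\rr F_j=\dim\rr$. For the subalgebra $\rr$ with complement $\h$, part (ii) gives $\sum_j\deg_\h F_j=\dim\h$. Lemma~\ref{lm:independ} shows that the choice of complement is immaterial for the g.g.s.\ notion, so these are legitimate applications. Using $\sum_j d_j=\bb(\g)$ and $\bb(\g)=\frac12(\dim\g+\ell)$, the total number of components is
\[
\dim\h+\dim\rr-\bb(\g)+\ell=\dim\g+\ell-\bb(\g)=\bb(\g).
\]
This proves the last assertion of the statement.

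\emph{Conclusion.} By Lemma~\ref{lm:trdeg}, $\trdeg\gZ_\times=\bb(\g)$. Thus $\gZ_\times$ is generated by $\bb(\g)$ elements and has transcendence degree $\bb(\g)$, so these generators are algebraically independent. Hence $\gZ_\times$ is the polynomial ring they freely generate.

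The only delicate point is the bookkeeping in the counting step. One must make sure that the extreme components in the range are exactly $F_j^\bullet$ and $(F_j)_\bullet$, so that the degrees $\deg_\rr F_j$ and $\deg_\h F_j$ appearing there are the ones controlled by Theorem~\ref{thm:kot14}{\sf (ii)}. Once that is checked, the rest is formal.
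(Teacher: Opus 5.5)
Your proof is correct and follows essentially the same route as the paper: bound the number of possibly nonzero bi-homogeneous components of each $F_j$ by the length of its degree interval, apply Theorem~\ref{thm:kot14}{\sf (ii)} to both $\h$ and $\rr$ to get the total $\bb(\g)$, and conclude algebraic independence from $\trdeg\gZ_\times=\bb(\g)$ (Lemma~\ref{lm:trdeg}). Your reading of the index range as $d_j-\deg_{\rr}F_j^\bullet\le i\le\deg_{\h}(F_j)_\bullet$ is the right one. Note that the printed bounds interchange $\h$ and $\rr$ in the upper bound as well as in the lower one.
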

\begin{proof}
Since $F_1,\ldots,F_\ell$ is a {\sf g.g.s.} for both $\h$ and $\rr$, using Theorem~\ref{thm:kot14}{\sf (ii)}
with $\me=\rr$ or $\h$, we obtain
\[
   \sum_{j=1}^\ell \deg_{\rr} F_j^\bullet=\dim\rr \quad \& \quad \sum_{j=1}^\ell \deg_{\h}(F_j)_\bullet = \dim\h .
\]  
For any $j$, the $\rr$-degrees of the nonzero bi-homogeneous components of $F_j$ belong to the interval
$[\deg_{\rr} (F_j)_\bullet, \deg_{\rr} F_j^\bullet]$. Hence there are at most 
$\deg_{\rr} F_j^\bullet - \deg_{\rr} (F_j)_\bullet + 1$ of them.
Since $\deg_{\rr} (F_j)_\bullet = d_j - \deg_{\h}  (F_j)_\bullet$, 
 the total number of non-zero bi-homogeneous components $\{(F_j)_{i,d_j-i}\}$ does not exceed 
\[
    \sum_{j=1}^\ell (\deg_{\rr} F_j^\bullet+\deg_{\h}(F_j)_\bullet-d_j+1)=
\dim\rr+\dim\h- \bb(\g)+\ell
=\bb(\g).
\] 
Since $\trdeg \gZ_\times=\bb(\g)$ (Lemma~\ref{lm:trdeg}), all these components are nonzero and 
algebraically independent. 
\end{proof}

\begin{thm}            \label{thm:ggs2}
Suppose that both algebras $\cz_0$ and $\cz_\infty$ have algebraically independent generators
and $F_1,\ldots,F_\ell$ is a common {\sf g.g.s.} for $\h$ and $\rr$. Then $\gZ_{\lg\h,\rr\rg}$  is 
a polynomial ring. 
\end{thm}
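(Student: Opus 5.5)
The plan is to show that $\gZ_{\lg\h,\rr\rg}$ is generated by the $\bb(\g)$ bi-homogeneous components singled out in Proposition~\ref{prop:ggs2}. Since these are algebraically independent and $\trdeg\gZ_{\lg\h,\rr\rg}=\bb(\g)$ (Theorem~\ref{thmZ}), this would make $\gZ_{\lg\h,\rr\rg}=\gZ_\times$ a polynomial ring. By Theorem~\ref{thmZ}, $\gZ_{\lg\h,\rr\rg}$ is generated by $\gZ_\times$ together with $\cz_0$ and $\cz_\infty$. So everything reduces to proving $\cz_0\subset\gZ_\times$ and $\cz_\infty\subset\gZ_\times$. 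By symmetry it suffices to treat $\cz_0$.

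First I identify $\cz_0$. Since $F_1,\dots,F_\ell$ is a \ggs\ for $\h$, the elements $F_1^\bullet,\dots,F_\ell^\bullet$ lie in $\cz_0$, are algebraically independent, and $\trdeg\cz_0=\ell$. Moreover, $\sum_j\deg F_j^\bullet=\sum_j d_j=\bb(\g)$. The hypothesis says $\cz_0$ is a polynomial ring, say freely generated by homogeneous $H_1,\dots,H_\ell$. I then compare degrees, as in the proof of Theorem~\ref{thm:kot14}{\sf (iii)}. The key input is the upper bound $\sum_i\deg H_i\le\bb(\g_{(0)})=\bb(\g)$, valid for a polynomial ring of symmetric invariants of maximal transcendence degree (e.g.~\cite{contr}, or the Jacobian/codim argument of Ooms--Van den Bergh type). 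Given that bound, the subalgebra $\bbk[F_1^\bullet,\dots,F_\ell^\bullet]\subset\bbk[H_1,\dots,H_\ell]$ is generated by $\ell$ algebraically independent homogeneous elements whose total degree is at least that of the free generators. This forces equality, i.e.\ $\cz_0=\bbk[F_1^\bullet,\dots,F_\ell^\bullet]$. The underlying fact is that an inclusion of graded polynomial rings of the same dimension with $\sum\deg F^\bullet_j\ge\sum\deg H_i$ must be an equality: the Jacobian of the $F_j^\bullet$ with respect to the $H_i$ is homogeneous of degree $\sum\deg F_j^\bullet-\sum\deg H_i\le0$, hence a nonzero constant, and a degree count then gives equality.

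With this, each $F_j^\bullet$ is one of the chosen bi-homogeneous components of $F_j$: it is the one with $i=d_j-\deg_\rr F_j^\bullet$ in the indexing of Proposition~\ref{prop:ggs2}, and it is nonzero. Hence $\cz_0\subset\gZ_\times$. Likewise $\cz_\infty=\bbk[(F_1)_\bullet,\dots,(F_\ell)_\bullet]\subset\gZ_\times$. Therefore $\gZ_{\lg\h,\rr\rg}=\gZ_\times$, which is a polynomial ring by Proposition~\ref{prop:ggs2}.

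The main obstacle is the step $\cz_0=\bbk[F_1^\bullet,\dots,F_\ell^\bullet]$, since we are not assuming the codim--$2$ property for $\g_{(0)}$ needed in Theorem~\ref{thm:kot14}{\sf (iii)}. I would try first the degree-bound argument above, using the hypothesis that $\cz_0$ is free and the known inequality $\sum\deg H_i\le\bb(\g_{(0)})$ for free Poisson centres of maximal transcendence degree. If that inequality needs codim--$2$ in its standard form, the fallback is a direct argument: any homogeneous $H\in\cz_0$ is algebraic over $\bbk[F^\bullet_j]$ (Lemma~\ref{lm:trdeg}), and one shows integral closure via the algebraic independence of the differentials $dF_j^\bullet$ on a big open set.
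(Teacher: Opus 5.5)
\textbf{The gap is the claim $\cz_0=\bbk[F_1^\bullet,\dots,F_\ell^\bullet]$.} Your degree argument for it runs in the wrong direction. The $F_j^\bullet$ are algebraically independent elements of $\bbk[H_1,\dots,H_\ell]$, so the Jacobian $\det(\partial F_j^\bullet/\partial H_i)$ is nonzero and homogeneous of degree $\sum_j\deg F_j^\bullet-\sum_i\deg H_i$. That degree is therefore $\ge 0$. So the upper bound $\sum_i\deg H_i\le\bb(\g)$ that you invoke holds automatically and forces nothing. Your ``underlying fact'' is false as stated: $\bbk[x^2]\subset\bbk[x]$ satisfies its hypothesis. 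Equality would require the opposite bound $\sum_i\deg H_i\ge\bb(\g)$. That lower bound is what the codim--$2$ property of $\g_{(0)}$ supplies (it is behind Theorem~\ref{thm:kot14}{\sf (iii)}), and it is not assumed here.

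\textbf{The claim is in fact false under the theorem's hypotheses, so no fallback can rescue the strategy $\gZ_{\lg\h,\rr\rg}=\gZ_\times$.} Take $\g=\mathfrak{sl}_{2n}$ with $n\ge 2$, $\h=\ce\oplus\ut$ and $\rr=\g_0$, as in Section~\ref{subs:gl1}. The hypotheses hold:
\begin{itemize}
\item the Hilbert basis built there is a \ggs\ for $\h$, and by Proposition~\ref{prop:all-ggs} it is also one for $\g_0$;
\item $\cz_0$ is polynomial by Theorem~\ref{thm:inv-h};
\item $\cz_\infty$ is polynomial, as recalled in Section~\ref{sect:ohne}.
\end{itemize}
However, $\te_0\subset\g_0$ is central in $\g_{(0)}=\h\ltimes\g_0^{\sf ab}$: for $t\in\te_0$ one has $[\ce,t]=0$ and $[\ut,t]\subset\ut\subset\h$. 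So $\cz_0$ contains nonzero elements of degree $1$. On the other hand, $\gZ_\times$ is generated by elements of degrees $d_j\ge 2$. Hence $\cz_0\not\subset\gZ_\times$ and $\gZ_{\lg\h,\rr\rg}\ne\gZ_\times$ (compare Remark~\ref{rem:generated-by}).

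\textbf{The paper's route avoids putting $\cz_0$ inside $\gZ_\times$.}
\begin{enumerate}
\item Start from the free generating set $\eus M$ of $\gZ_\times$ given by Proposition~\ref{prop:ggs2}; it has $\bb(\g)$ elements.
\item Replace the $\ell$ elements $F_j^\bullet$ by a Hilbert basis of $\cz_0$, which also has $\ell$ elements since $\trdeg\cz_0=\ell$. Likewise replace the $(F_j)_\bullet$ by a Hilbert basis of $\cz_\infty$.
\item Each $F_j^\bullet\in\cz_0$ and each $(F_j)_\bullet\in\cz_\infty$ is a polynomial in the new basis elements. So the new set generates $\gZ_{\lg\h,\rr\rg}$ by Theorem~\ref{thmZ}.
\item The new set still has $\bb(\g)=\trdeg\gZ_{\lg\h,\rr\rg}$ elements, so it is algebraically independent, and $\gZ_{\lg\h,\rr\rg}$ is a polynomial ring.
\end{enumerate}
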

\begin{proof}
By Proposition~\ref{prop:ggs2}, the algebra $\gZ_\times$ is freely generated by the 
set 
\[
    \eus M=\{(F_j)_{i,d_j-i} \mid 1\le j\le \ell,\, d_j-\deg_{\h} F_j^\bullet \le i \le \deg_{\rr}(F_j)_\bullet \}. 
\]
and $\#\eus M=\bb(\g)$.  
Now, we may replace $F_1^\bullet,\ldots,F_\ell^\bullet\in\cz_0\cap\eus M$ with a Hilbert basis of $\cz_0$
and $(F_1)_\bullet,\ldots,(F_\ell)_\bullet\in\cz_\infty\cap\eus M$ with a Hilbert basis of $\cz_\infty$.
By Theorem~\ref{thmZ}, the resulting set of polynomials generates $\gZ_{\lg\h,\rr\rg}$, and it
still contains  $\bb(\g)$ elements.
\end{proof}

\begin{rmk}
{\sf (i)} \ If there is no common {\sf g.g.s.} for $\h$ and $\rr$, then above arguments do not apply. Still, in 
some cases $\gZ_{\lg\h,\rr\rg}$ can be a polynomial ring, see Theorems~\ref{thm:sum} and \ref{thm:bb}. 
\\  \indent
{\sf (ii)} \ In Section~\ref{sec:g+t}, we come across a curious case in which $\cz_0$ and 
$\cz_\infty$ are polynomial rings and there does exist a common \ggs\ for $\h$ and $\rr$. 
\end{rmk} 

Recall that $H$ and $R$ are connected subgroups of $G$ (they are spherical now),
$s_0:=\trdeg \bbk((\g/\h)^*)^H$, and $s_\infty:=\trdeg \bbk((\g/\rr)^*)^R$. Below we provide a simpler 
proof for the inequality $s_0+s_\infty\ge\ell$ in the non-degenerate case and derive more properties of
$\gZ_{\lg\h,\rr\rg}$. 

\begin{prop}       \label{sumr}
We have $s_0+s_\infty\ge \ell$. If $s_0+s_\infty= \ell$, then all the components $(F_j)_{i,d_j-i}$ with 
$1\le j\le \ell$ and $0< i < d_j$ are nonzero and algebraically independent. 
\end{prop}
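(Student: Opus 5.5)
The plan is to derive both assertions at once from a transcendence-degree count for $\gZ_\times$, using Lemma~\ref{lm:trdeg}, i.e. $\trdeg\gZ_\times=\bb(\g)$. By Proposition~\ref{prop:Z-x}, $\gZ_\times$ is generated by the bi-homogeneous components $(F_j)_{i,d_j-i}$ with $1\le j\le\ell$ and $0\le i\le d_j$. I will split these generators into three groups:
\begin{itemize}
\item the \emph{pure $\rr$-parts} $(F_j)_{0,d_j}\in\gS(\rr)$;
\item the \emph{pure $\h$-parts} $(F_j)_{d_j,0}\in\gS(\h)$;
\item the \emph{middle components}, those with $0<i<d_j$.
\end{itemize}
There are at most $\sum_j(d_j-1)=\bb(\g)-\ell$ middle components, since $\sum_j d_j=\bb(\g)$ for reductive $\g$.

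The key step is to bound the transcendence degree of the pure parts by $s_0$ and $s_\infty$. Suppose $(F_j)_{0,d_j}\ne0$. Then it is the component of maximal $\rr$-degree, i.e. $F_j^\bullet$, so it lies in $\cz_0=\cz\gS(\g_{(0)})$ by \cite[Prop.\,3.1]{coadj}. It also lies in $\gS(\rr)$. For $x\in\h$ and $f\in\gS(\rr)$, Eq.~\eqref{eq:2-brak} shows that $\{x,f\}_0$ is exactly the derivation induced by the $\h$-module structure on $\rr\simeq\g/\h$. Hence $\gS(\rr)\cap\cz_0=\gS(\g/\h)^{\h}=\bbk[(\g/\h)^*]^H$, since $H$ is connected. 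Consequently the subalgebra generated by the pure $\rr$-parts has transcendence degree at most $\trdeg\bbk((\g/\h)^*)^H=s_0$. Symmetrically, the pure $\h$-parts lie in $\gS(\h)\cap\cz_\infty=\bbk[(\g/\rr)^*]^R$, and so their transcendence degree is at most $s_\infty$. This identification of $\gS(\rr)\cap\cz_0$ with the isotropy invariants is the only point needing care. I expect it to be a direct check from the formulas~\eqref{eq:2-brak}, taking care with the orientation of $F^\bullet$ versus $F_\bullet$.

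Subadditivity of transcendence degree over the three groups of generators then gives
\[
\bb(\g)=\trdeg\gZ_\times\le \trdeg(\text{middle})+s_0+s_\infty\le(\bb(\g)-\ell)+s_0+s_\infty .
\]
This yields $s_0+s_\infty\ge\ell$.

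Now suppose $s_0+s_\infty=\ell$. The same chain of inequalities gives $\trdeg(\text{middle})\ge\bb(\g)-\ell$. The middle components are at most $\bb(\g)-\ell$ in number. Hence there are exactly $\bb(\g)-\ell$ of them, so all of them are nonzero, and their transcendence degree equals their number, so they are algebraically independent.
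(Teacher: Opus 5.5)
Your proposal is correct and follows the paper's own proof essentially step for step. Like the paper, you count the $\bb(\g)-\ell$ middle components, bound the pure parts $(F_j)_{0,d_j}\in\gS(\rr)^{\h}\simeq\bbk[(\g/\h)^*]^H$ and $(F_j)_{d_j,0}$ by $s_0$ and $s_\infty$, and compare with $\trdeg\gZ_\times=\bb(\g)$ from Lemma~\ref{lm:trdeg}; your explicit check via Eq.~\eqref{eq:2-brak} that $\gS(\rr)\cap\cz_0=\gS(\rr)^{\h}$ just spells out the identification the paper states without comment.
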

\begin{proof} 
The number of all components $(F_j)_{i,d_j-i}$ with $1\le j\le \ell$ and $0< i < d_j$ is equal to 
$\sum_{j=1}^\ell(d_j-1)=\bb(g)-\ell$. ({\sl A priori}, some of them can be equal to $0$.) Note that the 
extreme values $i=0$ and $i=d_j$ are excluded here.

If $i=0$ and $(F_j)_{0,d_j}\ne 0$, then 
$(F_j)_{0,d_j}=F_j^\bullet\in\gS(\rr^{\rm ab})^\h$, i.e.,  $(F_j)_{0,d_j}\in\cz_0$. 
Under the natural identifications $\rr^*\simeq\Ann(\h)\simeq (\g/\h)^*$, we have 
$\gS(\rr)^\h \simeq\bbk[(\g/\h)^*]^H$. Therefore, at most $s_0$ elements among 
$\{(F_j)_{0,d_j}\mid j=1,\dots,\ell\}$ are algebraically independent. The similar argument applies to 
$s_\infty$, $i=d_j$, and $(F_j)_{d_j,0}$.  Then taking all bi-homogeneous components shows that 
$\trdeg \gZ_\times \le \bb(\g)-\ell+s_0+s_\infty$. On the other hand,
by Lemma~\ref{lm:trdeg}, one has $\trdeg \gZ_\times=\bb(\g)$. Hence $s_0+s_\infty\ge \ell$. 

If $s_0+s_\infty=\ell$, then the previous paragraph shows that there are at most $\ell$ algebraically 
independent elements among $\{(F_j)_{0,d_j}, (F_j)_{d_j,0}\mid 1\le j\le\ell\}$. Hence all the components 
with $0< i < d_j$ must be nonzero and algebraically independent. 
\end{proof}

The following useful result is similar to Theorem~\ref{thm:ggs2}. The difference is that  instead of the presence of a common 
\ggs\ we require that $s_0+s_\infty= \ell$.
\begin{thm}            \label{thm:sum}
Suppose that both algebras $\cz_0$ and $\cz_\infty$ have algebraically independent generators and 
$s_0+s_\infty= \ell$. Then $\gZ_{\lg\h,\rr\rg}$ is a polynomial ring that is freely generated by the set 
$\widetilde{\eus M}$ consisting of  $\{(F_j)_{i,d_j-i} \mid 1\le j\le \ell,\, 0< i < d_j\}$, a Hilbert basis of 
$\cz_0$, and a Hilbert basis of $\cz_\infty$.
\end{thm}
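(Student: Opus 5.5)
By Theorem~\ref{thmZ}, $\gZ_{\lg\h,\rr\rg}$ is generated by $\cz_0$, $\cz_\infty$ and all bi-homogeneous components $(F_j)_{i,d_j-i}$ with $0\le i\le d_j$. The extreme components $(F_j)_{0,d_j}$ and $(F_j)_{d_j,0}$, whenever nonzero, lie in $\cz_0$ and $\cz_\infty$ respectively; this is the observation in the proof of Proposition~\ref{sumr}. Hence they are redundant once Hilbert bases of $\cz_0$ and $\cz_\infty$ are included. Consequently $\widetilde{\eus M}$ generates $\gZ_{\lg\h,\rr\rg}$, and it remains to count its elements and show that they are algebraically independent.

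For the count, the middle components number $\sum_j(d_j-1)=\bb(\g)-\ell$. A Hilbert basis of $\cz_0$ has $\trdeg\cz_0$ elements. Since $\cz_0$ is a polynomial ring and $\ind\g_{(0)}=\ell$ (non-degeneracy, which holds by Theorem~\ref{sum-r}{\sf (ii)}), the argument of Lemma~\ref{lm:trdeg} gives $\ell\le\trdeg\cz_0\le\ell$. So the Hilbert basis of $\cz_0$ has exactly $\ell$ elements, and likewise for $\cz_\infty$. Therefore $\#\widetilde{\eus M}=\bb(\g)+\ell$. This exceeds $\trdeg\gZ_{\lg\h,\rr\rg}=\bb(\g)$, so a naive counting argument cannot give independence. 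Something must be wrong with this approach, and this is the main obstacle.

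The way out is to use $s_0$ and $s_\infty$. The generators of $\cz_0$ should be split according to bi-degree: $\cz_0=\gS(\g_{(0)})^{\g_{(0)}}$ is bi-graded, and its elements of $\h$-degree $0$ form $\gS(\rr)^\h\simeq\bbk[(\g/\h)^*]^H$, which has transcendence degree at most $s_0$. I would argue that a bi-homogeneous Hilbert basis of $\cz_0$ consists of $s_0$ elements of $\h$-degree zero together with $\ell-s_0$ elements of positive $\h$-degree. The latter should coincide, up to algebraic dependence, with middle components of the $F_j$. The expected count is therefore $(\bb(\g)-\ell)+s_0+s_\infty=\bb(\g)$, which uses the hypothesis $s_0+s_\infty=\ell$. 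I suspect the precise statement intends the extreme components only: the Hilbert bases replace the $\ell$ elements $\{(F_j)_{0,d_j},(F_j)_{d_j,0}\}$, and the number of generators is meant to total $\bb(\g)$. I would reconcile the count by proving $\cz_0=\bbk[\cz_0\cap\gS(\rr)]\otimes(\text{middle part})$.

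Assuming the count of $\bb(\g)$ generators works out, algebraic independence follows at once. A generating set of $\bb(\g)$ homogeneous elements for an algebra of transcendence degree $\bb(\g)$ is algebraically independent. Concretely, Proposition~\ref{sumr} gives the independence of the middle components, and adding $s_0+s_\infty$ further elements to reach transcendence degree $\bb(\g)$ forces independence. The hardest step is showing that $\cz_0$ is generated by $\gS(\rr)^\h$ together with middle components of the $F_j$; this is where the equality $s_0+s_\infty=\ell$ and the polynomiality of $\cz_0$ must be combined.
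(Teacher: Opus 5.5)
Your first step, that $\widetilde{\eus M}$ generates $\gZ_{\lg\h,\rr\rg}$ by Theorem~\ref{thmZ} because the extreme components lie in $\cz_0$ and $\cz_\infty$, is exactly the paper's. Your count is also correct. For a non-degenerate splitting $\trdeg\cz_0=\trdeg\cz_\infty=\ell$, so full Hilbert bases of the two centres contribute $2\ell$ elements and $\widetilde{\eus M}$ has $\bb(\g)+\ell$ members.

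The paper's proof ends by writing $\#\widetilde{\eus M}=\bb(\g)-\ell+s_0+s_\infty=\bb(\g)$. That is, it tacitly lets $\cz_0$ and $\cz_\infty$ contribute only $s_0$ and $s_\infty$ generators. This is exactly the point you flagged, and the proof does not justify it. Read literally, with an arbitrary Hilbert basis, free generation by $\widetilde{\eus M}$ fails. For $\g=\mathfrak{sp}_4=\be\oplus\ut_-$ one has $s_0=0$, $s_\infty=2$, $\cz_0=\bbk[F_1^\bullet,F_2^\bullet]$ and $\cz_\infty=\gS(\te)$ (cf.\ Remark~\ref{extrem}). Taking the Hilbert basis $F_1^\bullet,\,F_2^\bullet+(F_1^\bullet)^2$ of $\cz_0$ makes $\widetilde{\eus M}$ a set of $7$ distinct elements, while $\bb(\g)=6$. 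The count $\bb(\g)-\ell+s_0+s_\infty$ instead matches the middle components together with Hilbert bases of $E_0:=\cz_0\cap\gS(\rr)$ and $E_\infty:=\cz_\infty\cap\gS(\h)$. That is what Theorem~\ref{thm:bb} actually lists, and under that reading it is the generation step that needs proof.

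Your ``way out'' aims at the right statement, but it is only sketched and its decisive step is missing. Part of it can be made rigorous:
\begin{itemize}
\item All extreme components lie in $E_0\cup E_\infty$ and $\trdeg\gZ_\times=\bb(\g)$ (Lemma~\ref{lm:trdeg}). So the argument of Proposition~\ref{sumr} forces $\trdeg E_0=s_0$ and $\trdeg E_\infty=s_\infty$.
\item $\cz_0$ is bi-graded (rescaling $\rr$ is an automorphism of $\g_{(0)}$) and polynomial. Hence it has a bi-homogeneous Hilbert basis whose members of $\h$-degree $0$ freely generate $E_0$, so there are exactly $s_0$ of them; likewise for $\cz_\infty$.
\item Consequently the middle components together with Hilbert bases of $E_0$ and $E_\infty$ are $\bb(\g)$ algebraically independent elements. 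They generate a polynomial subalgebra $\gZ'\supset\gZ_\times$ over which $\gZ_{\lg\h,\rr\rg}$ is algebraic.
\end{itemize}
What you do not prove is that the remaining $\ell-s_0$ generators of $\cz_0$ (and $\ell-s_\infty$ of $\cz_\infty$) actually lie in $\gZ'$. ``Coincide up to algebraic dependence'' is not enough. Algebraicity over $\gZ'$ is automatic from the transcendence-degree count, whereas free generation needs membership. Nothing in your sketch supplies it, and neither do the bare hypotheses that $\cz_0,\cz_\infty$ are polynomial and $s_0+s_\infty=\ell$.

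In the paper's horospherical applications, membership comes from the explicit description of the centre. By Theorems~\ref{thm:ggs-h}{\sf (3)} and~\ref{thm:inv-h}, the generators of positive $\h_+$-degree are the $F_j^\bullet$ of bi-degree $(1,d_j-1)$, i.e.\ middle components. As it stands, your argument establishes generation and the independence of the middle components, but not the theorem.
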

\begin{proof}
The nonzero bi-homogeneous components of the form $(F_j)_{0,d_j}$ and $(F_j)_{d_j,0}$ with
$1\le j\le \ell$ belong to $\cz_0$ and $\cz_\infty$, respectively. Therefore, the algebra generated by 
$\widetilde{\eus M}$ contains all the  centres $\cz_t$, $t\in\BP$, i.e., it coincides with $\gZ_{\lg\h,\rr\rg}$. 
Furthermore,
\[
  \# \widetilde{\eus M}=  \bb(\g)-\ell+s_0+s_\infty=\bb(\g)=\trdeg \gZ_{\lg\h,\rr\rg}.
\]
Thus, the elements of $\widetilde{\eus M}$ are algebraically independent and $\gZ_{\lg\h,\rr\rg}$ is a 
polynomial ring. 
\end{proof}
\begin{rmk}
Formally, the presence of \ggs\ for $\h$ or $\rr$ is not required in Theorem~\ref{thm:sum}.
Nevertheless, we often need a \ggs\ for $\h$ (resp. $\rr$) in order to prove that $\cz_0$ (resp.
$\cz_\infty$) is a polynomial ring, cf. Theorem~\ref{thm:inv-h}. But, in this setting, \ggs\ for $\h$ and $\rr$
can be different.
\end{rmk}
 
\section{Horospherical contractions and splittings}
\label{sect:horo}

\noindent
Let $\g=\ut_+\oplus\te\oplus\ut_-$ be a fixed triangular decomposition and 
$\be_\pm=\ut_\pm\oplus\te$. Let $(\ ,\,)$ be a $G$-invariant non-degenerate symmetric bilinear form 
on $\g$. Then the bilinear form $\lg\ ,\,\rg:=(\ ,\,)\vert_\te$ on $\te$ is also non-degenerate. 

Let $\h_+$ be a horospherical subalgebra of $\g$ that lies in $\be_+$, i.e., 
$\ut_+\subset\h_+\subset\be_+$. Then $\h_+=\ut_+\oplus\te_1$, where $\te_1=\h_+\cap\te$. Note that 
$\h_+$ is a spherical subalgebra of $\g$ and $\lg\ ,\,\rg$ is non-degenerate on $\te_1$. 
Let $\te_0\subset\te$ be an algebraic subalgebra such that $\te_0\oplus\te_1=\te$. For instance, one 
can take $\te_0=\te_1^\perp\subset \te$, the orthogonal complement {w.r.t{.}} $\lg\ ,\,\rg$. Set 
$\h_-=\ut_-\oplus\te_0$. Then $\g=\h_+\oplus\h_-$ is a non-degenerate splitting, which we 
call a {\it horospherical splitting}. Since the summands here play symmetric 
roles, we concentrate on $\q:=\h_+\ltimes \h_-^{\sf ab}$ and \ggs\ for $\h_+$. We also say that  $\q$ is a 
{\it horospherical contraction\/} of $\g$. Recall that $F^\bullet$ stands for the bi-homogeneous component 
of $F\in \gS(\g)$ with minimal $\h_+$-degree ({=}\,maximal $\h_-$-degree). 

\begin{thm}               \label{thm:ggs-h}
Let $F_1,\dots, F_\ell$ be an arbitrary Hilbert basis of $\gS(\g)^\g$.  Set $A=\{j\mid F^\bullet_j\in \gS(\te_0)\}$
and $a=\#A$. Then 
\begin{itemize}
\item[\sf (1)] \ $\dim\te_0\le a$;
\item[\sf (2)] \ $F_1,\dots, F_\ell$ is a\/ {\sf g.g.s.} for $\h_+$ \ if and only if \ $\dim\te_0=a$;
\item[\sf (3)] \  if the equality holds in {\sf (1)}, then $F_j^\bullet\in\h_+\otimes\gS(\h_-)$ for all $F_j$ such
that $F_j^\bullet\not\in\gS(\te_0)$. That is, the bi-degree of such $F_j^\bullet$ is $(1,d_j-1)$ and
$\deg_{\h_-}\!(F_j)=d_j-1$.
\end{itemize}
\end{thm}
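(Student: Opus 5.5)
We plan to use Theorem~\ref{thm:kot14} with $\q=\g$, $\h=\h_+$, $\me=\h_-$. Its hypotheses hold for reductive $\g$, so $\sum_j\deg_{\h_-}F_j\ge\dim\h_-=\dim\ut_-+\dim\te_0$, with equality if and only if $F_1,\dots,F_\ell$ is a \ggs\ for $\h_+$. The whole argument then comes down to an upper bound on $\deg_{\h_-}F_j$ that depends on whether $j\in A$.

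\emph{Step 1: bounds on degrees.} For $j\in A$ we have $F_j^\bullet\in\gS(\te_0)$, so $\deg_{\h_-}F_j=d_j$. For $j\notin A$ we will show $\deg_{\h_-}F_j\le d_j-1$. Suppose $\deg_{\h_-}F_j=d_j$. Then $F_j^\bullet=F_j|_{\h_-}$ after identifying $\gS(\h_-)$ with functions on $\h_-^*=\Ann(\h_+)$. We use the invariant form to identify $\g^*\simeq\g$. Under this identification $\Ann(\h_+)=\h_+^\perp=\ut_+\oplus\te_0'$, where $\te_0'=\te_1^\perp\cap\te$. Since $F_j$ is $G$-invariant, its restriction to $\ut_+\oplus\te_0'$ depends only on the $\te_0'$-component: every element $x+n$ with $x\in\te_0'$, $n\in\ut_+$ has semisimple part conjugate to $x$, so by invariance and continuity $F_j(x+n)=F_j(x)$. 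Translating back, this says $F_j^\bullet$ lies in $\gS(\te_0)$, i.e.\ $j\in A$, a contradiction. Here one has to check carefully that the projection $\h_-\to\te_0$ along $\ut_-$ corresponds dually to restricting to the $\te_0'$-part; this holds because $\ut_-$ pairs trivially with $\te$. Hence $\deg_{\h_-}F_j\le d_j-1$ for $j\notin A$.

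\emph{Step 2: summation.} From Step 1,
\[
\textstyle\sum_j\deg_{\h_-}F_j\le\sum_{j\in A}d_j+\sum_{j\notin A}(d_j-1)=\bb(\g)-(\ell-a)=\dim\ut+a .
\]
Combined with Theorem~\ref{thm:kot14}{\sf (i)} this gives $\dim\ut+\dim\te_0\le\dim\ut+a$, which is {\sf (1)}.

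\emph{Step 3: the equivalence and {\sf (3)}.} If $a=\dim\te_0$, then all inequalities in Steps 1 and 2 are equalities. In particular $\sum_j\deg_{\h_-}F_j=\dim\h_-$, so by Theorem~\ref{thm:kot14}{\sf (ii)} the $F_j$ form a \ggs. Also $\deg_{\h_-}F_j=d_j-1$ for $j\notin A$, which is {\sf (3)}. Conversely, suppose we have a \ggs. The elements $F_j^\bullet$ with $j\in A$ are algebraically independent and lie in $\gS(\te_0)$, so $a\le\dim\te_0$; together with {\sf (1)} this gives equality. (Alternatively, the sum in Step 2 equals $\dim\h_-$ and is at most $\dim\ut+a$.)

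The main obstacle is Step 1, specifically making the identification of $F_j^\bullet$ with the restriction of $F_j$ to $\Ann(\h_+)$ precise, and justifying that this restriction factors through $\te$. Everything else is bookkeeping with Theorem~\ref{thm:kot14}.
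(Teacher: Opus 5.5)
Your proof is correct and follows the paper's argument: the same bound $\sum_j\deg_{\h_-}F_j\le\dim\ut+a$ combined with Theorem~\ref{thm:kot14}{\sf (i),(ii)}, and the same algebraic-independence argument for the converse in {\sf (2)}. The only variation is the step showing that $F_j^\bullet\in\gS(\h_-)$ forces $F_j^\bullet\in\gS(\te_0)$. The paper gets this directly from the $\te$-stability of bi-homogeneous components, since a $\te$-invariant element of $\gS(\ut_-\oplus\te_0)$ lies in $\gS(\te_0)$. You instead restrict $F_j$ to $\Ann(\h_+)\simeq\ut_+\oplus\te_1^\perp$ and use that invariants on $\be_+$ factor through $\te$; this is equally valid but requires the extra duality bookkeeping you flagged.
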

\begin{proof}  {\sf (1)} \ Since $\te$ normalises both $\h_+$ and $\h_-$,
the bi-homogeneous components of any $F\in \gS(\g)^\g$ are $\te$-stable. Therefore, if 
$F_i^\bullet\in\gS(\h_-)$, then $F_i^\bullet\in \gS(\te_0)$. We then obtain
\[
   F_i^\bullet \in \gS(\te_0) \Longleftrightarrow F_i^\bullet\in\gS(\h_-) \Longleftrightarrow 
   \deg_{\h_-}\!(F_i)=\deg F_i .
\]
On the other hand, if $F_i^\bullet\not\in\gS(\h_-)$, then clearly $\deg_{\h_-}\!(F_i)\le \deg F_i-1$. Hence 
\beq    \label{eq:a}
  \sum_{i=1}^{\ell}\deg_{\h_-}\!(F_i) \le \sum_{i=1}^{\ell} \deg F_i - (\ell -a) = \dim\be-\ell+a=\dim\ut+ a. 
\eeq       
By Theorem~\ref{thm:kot14}{\sf (i)}, one has 
$\dim\ut_-+ \dim\te_0=\dim\h_-\le \sum_{i=1}^{\ell}\deg_{\h_-}\!(F_i)$, i.e., $\dim\te_0\le a$.
\\ \indent
{\sf (2)} \ If $F_1,\dots, F_\ell$ is a \ggs, then $\{F^\bullet_j\mid j\in A\}$ are algebraically independent.
Therefore, $a=\#A\le \dim\te_0$ and hence $a=\dim\te_0$. The converse follows from 
Theorem~\ref{thm:kot14}{\sf (ii)} and \eqref{eq:a}.
\\ \indent
{\sf (3)} \ 
If $a=\dim\te_0$, then one must have the equality in~\eqref{eq:a}, which implies that
$\deg_{\h_-}\!(F_j)=\deg F_j -1$ for each $F_j$ such that  
$F_j^\bullet\not\in\gS(\te_0)$. 
\end{proof}

\begin{rmk}   
\label{rem:generated-by}
If $F_1,\dots, F_\ell$ is a\/ \ggs\ for $\h_+$, then $F_1^\bullet,\dots,F_l^\bullet$ are algebraically 
independent and $\trdeg\cz\gS(\q)=\ell$. 
However, this does not necessarily mean that $\cz\gS(\q)$ is generated by $\{F_i^\bullet\}$.
The definition of $\q$ shows that $\te_0$ is the centre of $\q$, hence
$\te_0\subset\cz\gS(\q)$. Therefore, the polynomials $\{F_i^\bullet\}$ can generate $\cz\gS(\q)$
only if a Hilbert basis of $\gS(\g)^\g$ contains $\dim\te_0$ elements of degree~1, i.e.,
$\dim\te_0=\dim\z(\g)$, where $\z(\g)$ is the centre of $\g$. In particular, if $\g$ is semisimple, then the 
equality $\cz\gS(\q)=\bbk[F_1^\bullet,\dots,F_\ell^\bullet]$ can only hold if $\h_+=\be$ and
$\h_-=\ut_-$. (And by~\cite[Theorem\,3.3]{alafe} this is really the case.)
\end{rmk}

\subsection{}
Our next goal is to demonstrate that $\q=\h_+\ltimes \h_-^{\sf ab}$ has a polynomial ring $\cz\gS(\q)$ 
whenever $\h_+$ admits a \ggs\ To prove that certain elements of $\cz\gS(\q)=\bbk[\q^*]^Q$ generate the whole ring of invariants, we use 
the Igusa criterion~\cite[Lemma~4]{Ig}. Other proofs of this result can be found in~\cite[Thm.\,4.12]{VP} or~\cite[Lemma~6.1]{rims}.

\begin{lm}[Igusa]              \label{lm-Ig}
Let an algebraic group $L$ act regularly on an irreducible affine variety $X$ and let $C$ be an integrally 
closed finitely generated subalgebra of\/ $\bbk [X]^L$. Suppose that the morphism 
$\psi: X\to \spe C =: Y$ has the properties:
\begin{itemize}
\item[{\sf (i)}] $\Ima\psi$ contains an open subset $\Omega\subset Y$ such that $\codim(Y\setminus\Omega) \ge 2$;
\item[{\sf (ii)}] there is a  dense open subset  $\Xi\subset Y$ such that $\psi^{-1}(y)$ contains a dense $L$-orbit for all $y\in \Xi$.
\end{itemize}
Then $C = \bbk [X]^L$. \emph{[In particular, the algebra of $L$-invariants is finitely generated.]}  
\end{lm}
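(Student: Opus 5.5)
The plan is to take an arbitrary $F\in\bbk[X]^L$ and show $F\in C$ in two stages. First, $F$ lies in the fraction field $\bbk(Y)$ of $C$. Second, $F$, viewed as a rational function on $Y$, has no poles. Since $C$ is integrally closed, $Y$ is a normal affine variety, so a rational function on $Y$ without poles is regular, i.e.\ lies in $C$. Throughout, note that $\psi$ is dominant, because $\Ima\psi\supset\Omega$ and $\Omega$ is dense in $Y$.

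\emph{Step 1: $F\in\bbk(Y)$.} I would consider the morphism $\Phi=(\psi,F)\colon X\to Y\times\BA^1$ and the closure $Z$ of its image; $Z$ is irreducible since $X$ is. The projection $p\colon Z\to Y$ is dominant. For $y\in\Xi$, the fibre $\psi^{-1}(y)$ contains a dense $L$-orbit on which the invariant $F$ is constant. By continuity, $F$ is then constant on all of $\psi^{-1}(y)$. Hence $p^{-1}(y)\cap\Phi(X)$ is a single point for $y$ in a dense subset of $Y$. Since $\Phi(X)$ contains a dense open subset of $Z$, the generic fibre of $p$ is a single point, so $p$ is generically injective. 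As $\mathsf{char}\,\bbk=0$, the dominant map $p$ is generically finite and separable of degree $1$, hence birational. Thus $\bbk(Z)=\bbk(Y)$, and the coordinate function $F$ on $Z$ lies in $\bbk(Y)$, with $F=f\circ\psi$ for some $f\in\bbk(Y)$. I expect this to be the main technical point; the key input is that characteristic zero rules out purely inseparable degree.

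\emph{Step 2: $f$ has no poles.} Suppose the polar divisor of $f$ on the normal variety $Y$ is nonzero, and let $D$ be one of its irreducible components. Then $1/f$ lies in the discrete valuation ring $\mathcal O_{Y,D}$ with positive valuation. So there is an open $U\subset Y$ meeting $D$ such that $g:=1/f$ is regular on $U$ and vanishes on $D\cap U$. Because $\codim(Y\setminus\Omega)\ge2$ and $D$ is a divisor, $D\cap U\cap\Omega\ne\varnothing$. Pick $y$ there and, by (i), a point $x\in X$ with $\psi(x)=y$.

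On the open set $\psi^{-1}(U)\ni x$, both $g\circ\psi$ and $F$ are regular. Their product equals $1$ on a dense open subset, namely where $f\circ\psi$ is defined, so by continuity it equals $1$ everywhere on $\psi^{-1}(U)$. But $(g\circ\psi)(x)=g(y)=0$, so the product vanishes at $x$, a contradiction. Hence $f$ has no poles, so $f\in C$ by normality of $Y$, i.e.\ $F\in C$. This gives $C=\bbk[X]^L$; finite generation of the invariant algebra follows because $C$ is finitely generated by hypothesis.
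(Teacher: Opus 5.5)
Your proof is correct, and it is the standard argument. The paper gives no proof of its own and only cites Igusa's Lemma~4, Theorem~4.12 of Popov--Vinberg and Lemma~6.1 of the \emph{rims} reference; your two steps are exactly the ones used there. First, (ii) together with $\mathsf{char}\,\bbk=0$ makes $\overline{(\psi,F)(X)}\to Y$ birational. Second, a polar divisor of $f$ is ruled out by the normality of $Y$ and the codimension condition in (i).

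The one step worth writing out is the claim that the generic fibre of $p$ is a single point. Take a dense open $U'\subset Z$ contained in $\Phi(X)$. The fibres of $p$ over $\Xi$ meet $U'$ in at most one point, which forces $\dim Z=\dim Y$. Hence $p(Z\setminus U')$ is not dense in $Y$, so over a dense open subset of $\Xi$ the entire fibre of $p$ lies in $U'$ and is a singleton.
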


As before, if $\gt v$ is a subspace of $\te$, then $\gt v^\perp\subset\te$ is its orthogonal complement  
w.r.t. $\lg\ ,\,\rg$. We say that $x\in\gt v$ is generic, if $\g^x=\g^{\gt v}$. 

\begin{lm}               \label{lm:t}
Let $\te_1\subset \te$ be a subspace and $x\in\te_1^\perp$ a generic point. Letting $\es:=[\g^x,\g^x]$, 
we have $\te\cap \es\subset \te_1$ and $\te\cap\es$ is a Cartan subalgebra of\/ $\es$.  
\end{lm}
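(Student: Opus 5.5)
Since $x\in\te_1^\perp\subset\te$ is semisimple, the centraliser $\g^x$ is a reductive Levi subalgebra containing $\te$. It is the sum of $\te$ and the root spaces $\g_\alpha$ with $\alpha(x)=0$. Genericity of $x$ in $\te_1^\perp$ means $\g^x=\g^{\te_1^\perp}$. Hence the roots occurring in $\g^x$ are exactly those $\alpha$ that vanish on all of $\te_1^\perp$. Write $\Phi_x$ for this set of roots, so that $\g^x=\te\oplus\bigoplus_{\alpha\in\Phi_x}\g_\alpha$.

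The plan has three steps.
\begin{enumerate}
\item Describe $\es=[\g^x,\g^x]$ explicitly. Since $\g^x$ is reductive with Cartan subalgebra $\te$, its derived algebra is
\[
\es=\bigoplus_{\alpha\in\Phi_x}\g_\alpha\ \oplus\ \sum_{\alpha\in\Phi_x}[\g_\alpha,\g_{-\alpha}].
\]
Here $[\g_\alpha,\g_{-\alpha}]=\bbk h_\alpha$, where $h_\alpha\in\te$ is the coroot. Using the form $\lg\ ,\,\rg$, $h_\alpha$ is proportional to $t_\alpha$, the element with $\lg t_\alpha,y\rg=\alpha(y)$ for all $y\in\te$. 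It follows that $\te\cap\es$ is the span of $\{t_\alpha\mid\alpha\in\Phi_x\}$.
\item Show $\te\cap\es\subset\te_1$. For $\alpha\in\Phi_x$ we have $\alpha|_{\te_1^\perp}=0$, which means $\lg t_\alpha,\te_1^\perp\rg=0$. Thus $t_\alpha\in(\te_1^\perp)^\perp$. Since $\lg\ ,\,\rg$ is non-degenerate on $\te$, $(\te_1^\perp)^\perp=\te_1$, which gives the inclusion.
\item Show $\te\cap\es$ is a Cartan subalgebra of $\es$. Because $\es$ is semisimple and $\te$-stable, it decomposes as $\es=(\te\cap\es)\oplus\bigoplus_{\alpha\in\Phi_x}\g_\alpha$. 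The subalgebra $\te\cap\es$ is toral. It is maximal toral, since any element of $\es$ centralising $\te\cap\es$ has no component in a root space $\g_\alpha$: indeed $\alpha(h_\alpha)=2\neq0$ with $h_\alpha\in\te\cap\es$. Alternatively, one can simply invoke the standard fact that a Cartan subalgebra of a reductive algebra meets its derived algebra in a Cartan subalgebra of the latter.
\end{enumerate}

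I expect no real obstacle here. The only point requiring care is the identification of $\Phi_x$ with the roots that vanish on the whole of $\te_1^\perp$, which is exactly where genericity of $x$ enters. The rest is the standard structure theory of Levi subalgebras, together with the non-degeneracy of $\lg\ ,\,\rg$ on $\te$ needed for $(\te_1^\perp)^\perp=\te_1$.
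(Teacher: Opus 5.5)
Your proof is correct and is essentially the paper's argument written out in root coordinates. The paper decomposes $\g^x=\z\oplus\es$, uses invariance to get $(\z,\es)=0$ and genericity to get $\te_1^\perp\subset\z$, and concludes $\te\cap\es=\z^\perp\subset(\te_1^\perp)^\perp=\te_1$; your span of the $t_\alpha$ ($\alpha\in\Phi_x$) is exactly this $\z^\perp$, and your Step~3 additionally spells out the Cartan-subalgebra claim, which the paper leaves as a standard fact.
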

\begin{proof}
Write $\g^x=\z\oplus\es$, where $\z$ is the centre of $\g^x$. Then $(\z,\es)=0$. Since $x\in \te_1^\perp$ 
is generic, we have $\te_1^\perp\subset \z$. Hence $\z^\perp = \es\cap\te$ is contained in $\te_1$. 
\end{proof}

Let $Q$ be a connected algebraic group with $\Lie Q=\q$. It is the semi-direct product of 
the connected subgroup $H_+\subset G$ and the commutative unipotent group $\exp(\h_-^{\sf ab})$.
Since $\h_+$ is spherical, we have $\ind\q=\ell$ and  $\trdeg \bbk(\q^*)^Q=\ell$. 

\begin{lm}             \label{lm:dense}
{\sf (i)} \ For any $x\in\te_1^\perp$, the affine space $x+\ut_+$ is $H_+$-stable. 
Moreover, if $x\in\te_1^\perp$ is generic, then $H_+$ has a dense orbit in $x+\ut_+$; 
\\ 
{\sf (ii)} \  one has $\max_\xi (\dim H_+{\cdot}\xi)=\dim\ut$, where $\xi$ ranges over 
$\te_1^\perp\oplus\ut$.
\end{lm}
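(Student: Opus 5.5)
The plan is to work inside $\g$ via the identification $\g^*\simeq\g$ given by $(\ ,\,)$, with $H_+=T_1U_+$ acting by the adjoint action, where $T_1$ is the torus with Lie algebra $\te_1$. The argument is a direct computation with stabilisers followed by a dimension count, and Lemma~\ref{lm:t} provides the key input.

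\emph{Stability in (i).} Take $x\in\te_1^\perp\subset\te$. The torus $T_1$ fixes $x$ and normalises $\ut_+$, so $T_1$ preserves $x+\ut_+$. For $u\in U_+$ and $n\in\ut_+$ we have $\mathrm{Ad}(u)(x+n)\in x+\ut_+$. Indeed, $\mathrm{Ad}(u)$ preserves $\be_+$ and acts trivially on $\be_+/\ut_+\simeq\te$. Hence $x+\ut_+$ is $H_+$-stable. It is an irreducible variety of dimension $\dim\ut$.

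\emph{Dense orbit in (i).} Now let $x\in\te_1^\perp$ be generic, and write $\g^x=\z\oplus\es$ as in Lemma~\ref{lm:t}. Here $\z\subset\te$ is the centre of $\g^x$ and $\es=[\g^x,\g^x]$ is semisimple, with Cartan subalgebra $\te\cap\es$. Since $\te\cap\es$ is Cartan in $\es$ and lies in $\te$, the root spaces of $\es$ are root spaces $\g_\alpha$ of $\g$, and $\es\cap\ut_+$ is the nilradical of a Borel subalgebra of $\es$. Choose $e\in\es\cap\ut_+$ regular nilpotent in $\es$, and set $\xi=x+e\in x+\ut_+$. Because $x$ is semisimple and $[x,e]=0$, we get $\g^{\xi}=\g^x\cap\g^e=\z\oplus\es^e$. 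Moreover $\es^e\subset\es\cap\ut_+$ has dimension $\rk\es$. Therefore
\[
\h_+^{\xi}=\h_+\cap\g^\xi=(\z\cap\te_1)\oplus\es^e .
\]
By Lemma~\ref{lm:t}, $\te\cap\es\subset\te_1$, and $\te=\z\oplus(\te\cap\es)$, so $\te_1=(\te\cap\es)\oplus(\z\cap\te_1)$. This gives $\dim\te_1=\rk\es+\dim(\z\cap\te_1)$. Consequently
\[
\dim H_+\xi=\dim\ut+\dim\te_1-\dim(\z\cap\te_1)-\rk\es=\dim\ut=\dim(x+\ut_+).
\]
An orbit of full dimension in the irreducible variety $x+\ut_+$ is dense, which proves the second part of (i).

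\emph{Proof of (ii).} Every $\xi\in\te_1^\perp\oplus\ut$ lies in some $x+\ut_+$ with $x\in\te_1^\perp$. By (i) its $H_+$-orbit stays in that affine space, so $\dim H_+\xi\le\dim\ut$. The element $x+e$ constructed above attains this bound.

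\emph{Main obstacle.} The only delicate point is the stabiliser computation in (i). It requires two facts: that $\g^{x+e}=\g^x\cap\g^e$, which holds by the Jordan decomposition, and the splitting $\te_1=(\te\cap\es)\oplus(\z\cap\te_1)$, which is exactly where Lemma~\ref{lm:t} is used. Without this splitting, $T_1$ would not contain a maximal torus of the Borel subgroup of $\es$, and the orbit could be too small.
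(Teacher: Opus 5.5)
Your proof is correct and follows the paper's argument essentially step for step. You show $H_+$-stability of $x+\ut_+$, then compute the stabiliser $\h_+^{x+e}=(\te_1\cap\z)\oplus\es^e$ for $e$ regular nilpotent in $\es\cap\ut_+$, with Lemma~\ref{lm:t} supplying $\dim(\te_1\cap\z)=\dim\te_1-\rk\es$. Your treatment of (ii) is actually a bit tidier than the paper's: you take the upper bound from the $H_+$-stability of each $x+\ut_+$ and note that it is attained at $x+e$, whereas the paper asserts $\dim H_+{\cdot}\xi=\dim\ut$ for all $\xi\in\Omega\times\ut$, which only holds on a dense open subset.
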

\begin{proof}
{\sf (i)} Since $H_+{\cdot}x\subset x+\ut_+$ and $H_+{\cdot}\ut_+\subset\ut_+$, the subvariety 
$x+\ut_+\subset\g$ is $H_+$-stable. For $x\in\te_1^\perp$, the Levi subalgebra $\g^x$ inherits the 
triangular decomposition from $\g$.  
Let $e\in \g^x\cap \ut_+$ be a regular nilpotent 
element of $\es=[\g^x,\g^x]$. Then $x+e\in\g_{\sf reg}$ and $\g^{x+e}=\g^x\cap\g^e=\z\oplus\es^e$. Therefore
$\h_+^{x+e}=(\te_1\cap\z)\oplus \es^e$ and  
$\dim\es^e=\rk\es$. By Lemma~\ref{lm:t}, if $x$ is generic, then $\dim(\te_1\cap\z)=\dim\te_1-\rk\es$. 
Hence $\dim\h_+^{x+e}= \dim\te_1$ and $\dim H_+{\cdot}(x+e)=\dim\ut_+$.

{\sf (ii)} Let $\Omega\subset\te_1^\perp$ be the set of generic elements. Then $\Omega\times\ut$ is a 
dense open subset of $\h_+^\perp=\te_1^\perp\oplus\ut$ and $\dim H_+{\cdot}\xi=\dim \ut$ for all
$\xi\in \Omega\times\ut$.
\end{proof}

\begin{thm}               \label{thm:inv-h}
If there is a \ggs\ for $\h_+$, then $\cz\gS(\q)$ is a polynomial ring. More precisely, if
$F_1,\ldots,F_{\ell}$ is a \ggs, then $\cz\gS(\q)$ is freely generated by a basis of\/ $\te_0$ and the 
polynomials $\{F_j^\bullet\}$ that are not contained in $\gS(\te_0)$. 
\end{thm}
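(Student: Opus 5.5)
We apply the Igusa criterion (Lemma~\ref{lm-Ig}) with $L=Q$, $X=\q^*$ and $C$ the polynomial algebra generated by a basis $t_1,\dots,t_m$ of $\te_0$ ($m=\dim\te_0$) together with the $F_j^\bullet$ for $j\notin A$. By Theorem~\ref{thm:ggs-h}, $a=\#A=m$.

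\textbf{Step 1: $C$ is a polynomial ring in $\ell$ variables.} All generators lie in $\cz\gS(\q)$: the $F_j^\bullet$ by \cite[Prop.\,3.1]{coadj}, and $\te_0$ because it is central in $\q$ (Remark~\ref{rem:generated-by}). The $F_j^\bullet$ with $j\in A$ are algebraically independent elements of $\gS(\te_0)$, and there are $m$ of them. Hence $\bbk[F_j^\bullet\mid j\in A]\subset\gS(\te_0)$ is an algebraic extension, and $\bbk(\te_0)$ is algebraic over the field they generate. Replacing them by $t_1,\dots,t_m$ therefore preserves the transcendence degree. The resulting $\ell$ generators are algebraically independent, so $C$ is a polynomial ring in $\ell$ variables and in particular integrally closed. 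It remains to check conditions (i) and (ii) of Lemma~\ref{lm-Ig} for $\psi:\q^*\to\BA^\ell$.

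\textbf{Step 2: generic fibres.} We identify $\q^*$ with $\g^*\simeq\g$ via $(\ ,\,)$. By Theorem~\ref{thm:ggs-h}(3), every $F_j^\bullet$ with $j\notin A$ has bidegree $(1,d_j-1)$, i.e. it is linear in the $\h_+$-part. The $\te_0$-coordinates are the values of $t_i$.

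\emph{Claim (expected, not yet verified).} On the open set where the $\h_-$-part is "generic", the map $\psi$ is \'etale-like. That is, fixing the $\h_-$-component, the functions $F_j^\bullet$ ($j\notin A$) are affine-linear in the $\h_+$-variables. Each fibre should then be one orbit: a generic fibre of $\psi$ has dimension $\dim\q-\ell$, while generic $Q$-orbits have dimension $\dim\q-\ind\q=\dim\q-\ell$. So it suffices that a generic fibre be irreducible, or at least contain a dense orbit.

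I would establish this using Lemma~\ref{lm:dense}. The action of $\exp(\h_-^{\sf ab})$ shifts the $\h_+$-dual coordinate (inside $\h_+^*\simeq\h_-$ under the pairing). The $H_+$-action on the $\h_-^*$-part, which identifies with $\h_+^\perp=\te_1^\perp\oplus\ut$, has a dense orbit in each $x+\ut_+$ for generic $x\in\te_1^\perp$. Thus the quotient by $Q$ reduces to the $\te_1^\perp$-coordinate, i.e. $\te_0$-values, together with the residual affine fibre on which the unipotent part acts by translations. The linear-in-$\h_+$ structure of the remaining $F_j^\bullet$ then shows that a generic fibre is a single orbit: an affine subspace of the right dimension, acted on transitively by the translations.

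\textbf{Step 3: image of $\psi$.} We need $\Ima\psi$ to contain an open subset with complement of codimension $\ge2$. I would show $\psi$ is surjective, or at least dominant with image containing the complement of a codimension-$2$ set. I would restrict $\psi$ to a section: the $\te_0$-coordinates cover $\te_0$ freely, and on the slice $x+\ut_+$ plus suitable $\h_+$-components the remaining $F_j^\bullet$ take arbitrary values. Alternatively, one can use that the $\h_+$-linear dependence makes the fibre over each point a nonempty affine space as soon as the corresponding linear system is consistent. The main obstacle is this step: verifying that inconsistency occurs only in codimension $\ge2$. I would try the trick of \cite[Lemma~6.1]{rims}: check that the differentials $dF_j^\bullet$ ($j\notin A$), $dt_i$ are linearly independent on a big open subset, exploiting the codim--2 behaviour of $\g_{\sf reg}$ under the identification with Kostant's theorem for the $\te_0$-slices. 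Once (i) and (ii) hold, Lemma~\ref{lm-Ig} gives $C=\cz\gS(\q)$, as required.
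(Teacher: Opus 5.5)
Your framework is the paper's own: Igusa's criterion (Lemma~\ref{lm-Ig}) with $X=\q^*$, $L=Q$ and the same algebra $C$, and Step~1 is fine. But conditions (i) and (ii), which carry all the content, are left as an expectation and an acknowledged obstacle. The ingredient you are missing in both is Kostant's theorem at a regular point: if $y\in\g\simeq\g^*$ is regular, then $d_yF_1,\dots,d_yF_\ell$ are linearly independent (and lie in $\g^y$).

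With it, Step~3 has no codimension-two issue at all, because $\psi$ is onto. The $\te_0$-coordinates of $\psi$ single out a point $x\in\te_1^\perp\simeq\te_0^*$, and every $x$ occurs. Choose $y\in x+\ut$ regular in $\g$; it exists, cf.\ the proof of Lemma~\ref{lm:dense}. For $j\notin A$, $F_j$ has no component of $\h_+$-degree $0$, and $F_j^\bullet$ has bi-degree $(1,d_j-1)$ by Theorem~\ref{thm:ggs-h}{\sf (3)}. Hence $d_yF_j=d_yF_j^\bullet\in\h_+$, and $u\mapsto F_j^\bullet(y+u)$ is the linear form on $\h_+^*$ given by this vector. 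These forms are independent, so $\psi(y+\h_+^*)=\{x\}\times\BA^{\ell-a}$. Your fallback via independence of the differentials on a big open subset of $\q^*$ would also yield (i): a smooth morphism is open, and the closed complement of the image cannot contain a divisor $\{g=0\}$, since then $g\circ\psi$ would be a nonconstant polynomial without zeros off a codimension-two set. But establishing that independence is harder and unnecessary.

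In Step~2, your sketch does not establish that a generic fibre ``is a single orbit: an affine subspace'', and this is not how the argument goes. Only the part of the fibre lying over the dense $H_+$-orbits in $x+\ut$ is under control. Put $X_0=\h_+^*\times H_+\cdot(\Omega+e)$, with $\Omega\subset\te_1^\perp$ the generic locus. What one proves is that $\psi^{-1}(v)\cap X_0$ is one $Q$-orbit:
\begin{enumerate}
\item using $H_+$, move the $\h_-^*$-component to $y=x+e$;
\item $\exp(\h_-)$ sweeps $u+y$ exactly over $u+y+\Ann(\h_+^y)$;
\item $\dim\h_+^y=\dim\te_1=\ell-a$ (proof of Lemma~\ref{lm:dense}), while the $\ell-a$ independent vectors $d_yF_j^\bullet$, $j\notin A$, lie in $\h_+\cap\g^y=\h_+^y$; so they form a basis of $\h_+^y$, and the level set of these $F_j^\bullet$ in $y+\h_+^*$ is precisely $u+\Ann(\h_+^y)$.
\end{enumerate}
The paper phrases the last point via the trivial action of $H_+^y\subset G^y$ on $(\h_+^y)^*$.

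You then still need this orbit to be dense in $\psi^{-1}(v)$ for generic $v$, i.e.\ that no irreducible component of the fibre misses $X_0$. Your comparison of fibre and orbit dimensions only helps if the fibre is irreducible, which is exactly what is at stake. The paper closes this with a dimension count. Every irreducible component of a nonempty fibre has dimension $\ge\dim\g-\ell$. The components missing $X_0$ lie in $\h_+^*\times\bigl((\Omega+\ut)\setminus H_+\cdot(\Omega+e)\bigr)$, which has dimension $\le\dim\g-1$. So their images lie in a proper closed subset of $\BA^\ell$, and removing it gives the open set required in Lemma~\ref{lm-Ig}{\sf (ii)}.
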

\begin{proof}
We are going to apply Lemma~\ref{lm-Ig} to $X=\q^*$, $L=Q$, and the algebra $C\subset\gS(\q)^Q$ 
generated by a basis of $\te_0$ and the highest components $\{F_i^\bullet\}$ that are not contained in 
$\gS(\te_0)$. Then we get the morphism $\psi: \q^*\to \spe C$.

\textbullet\quad Since $F_1,\ldots,F_{\ell}$ is a {\sf g.g.s.} for $\h_+$, the generators of $C$ are 
algebraically independent. In particular, $C$ is integrally closed and $\spe C\simeq\BA^\ell$ is an affine space. 
Set $a=\dim\te_0$. By Theorem~\ref{thm:ggs-h}, we may assume that $F_i^\bullet \in \gS(\te_0)$ if 
$ i\le a$ and  $F_i^\bullet \in \h_+\otimes\gS(\h_-)$ if $i\ge a+1 $. 

\textbullet\quad Let us show that $\psi$ is onto. Write $\q^*=\h_+^*\oplus\h_-^*$,  where $\h_-^*$ is 
identified with $\Ann(\h_+)\simeq\h_+^\perp=\ut\oplus\te_1^\perp$. 
The elements of $C$ that form a basis of $\te_0$ provide a surjective map from $\h_-^*$ to $\BA^{a}\simeq \te_0^*$. 

Take an arbitrary $x\in\te_1^\perp\subset\h_-^*$  and consider the affine subspace 
$x+\ut\subset\h_-^*$. There is $y \in x+\ut$  that is a regular element of $\g\simeq\g^*$. 

For a bi-homogeneous component $F_{j,d-j}$ of $F\in\gS^d(\g)$, we have $\textsl{d}_y F_{j,d-j}=0$, if 
$j\ge 2$. Suppose that $i>a$. Then 
$F_i^\bullet\in \h_+\otimes\gS(\h_-)$ and $\eta:=\textsl{d}_y F_i=\textsl{d}_y F_i^\bullet$. Furthermore, 
$\eta\in\h_+$ and as a linear function on $\h_+^*$  it coincides with the 
restriction of $F_i^\bullet$ to $y+\h_+^*\subset\q^*$. By a result of Kostant~\cite[Theorem~9]{ko63}, we have 
$\langle \textsl{d}_y F_j \mid 1\le j\le\ell\rangle_{\bbk}=\g^y$. In particular, the restrictions
${F_i^\bullet}|_{y+\h_+^*}$ with $a{+}1\le i \le \ell$ are linearly independent. Hence they define a 
 surjective map 
from $y+\h_+^*$ to $\BA^{\ell-a}$. Thus, $\psi$ is indeed surjective. 

\textbullet\quad Our next task is to study generic fibres of $\psi$.  
For any $x\in\te_1^\perp$, the affine subspace $x+\ut\subset\h_-^*$ is $H_+$-stable. 
Suppose that $x\in\te_1^\perp$ is generic, i.e., $x\in\Omega$.  
By Lemma~\ref{lm:dense},
$H_+$ has a dense orbit  $ H_+{\cdot}(x+e)\subset x+\ut$ with $e\in\gt u$.  
Note that $e$ does not depend on $x$.  

Since $C$ contains a basis of $\te_0$ and $\te_1^\perp\simeq\te_0^*$, we have $\psi(x+\ut+\h_+^*)\cap\psi(\tilde x+\ut+\h_+^*)=\varnothing$, whenever $\tilde x\in\te_1^\perp$ is not 
equal to $x$. There is a dense open subset 
$V\subset \spe C$ such that
for each $v\in V$, the fibre $\psi^{-1}(v)$ contains a point $\xi=u+t+e$, where $u\in\h_+^*$ and  $t\in\Omega$, 
i.e.,  
here $H_+{\cdot}(t+e)$ is dense in $t+\ut$. Set $y=t+e$.  Then 
$\psi^{-1}(v) \cap (\h_+^* \times H_+{\cdot}y)$ is a non-empty open subset of $\psi^{-1}(v)$ for each $v\in V$. 

Let us say that an irreducible  component $\Psi_v$ of $\psi^{-1}(v)$ with $v\in V$ is 
 not {\it suitable}, if $\Psi_v\cap X_0=\varnothing$ for $X_0:=\h_+^* \times H_+{\cdot}y$. If $\psi^{-1}(v)$ has a 
non-suitable component, then $v$ is  {\it non-suitable} as well. 
Let $D_{\sf ns}\subset\q^*$ be the union of all non-suitable components.  We have 
\[
D_{\sf ns} \subset \h_+^* \times \left (\bigcup_{x\in \Omega } (x+\ut) \setminus H_+{\cdot}\tilde y \right) =
\h_+^* \times \left ( (\Omega\times \gt u) \setminus H_+{\cdot}(\Omega\times \bbk e)\right),
\]
where $\tilde y=x+e$ and $H_+{\cdot}\tilde y$ is dense in $x+\ut$ for each $x$. 
Note that the dimension of each 
irreducible component $\Psi_v$ is at least $\dim\g-\ell$. 
Since $\dim \overline{D_{\sf ns}}\le \dim\g -1$ and a generic fibre of 
$\psi\!: \overline{D_{\sf ns}}\to \overline{\psi(D_{\sf ns})}$ has dimension at least  $\dim\g - \ell$, the image $\psi(D_{\sf ns})$ is contained in a proper closed subset of $\spe C$. Therefore we can 
replace $V$ with a  non-empty open subset consisting of suitable points. Now 
$\psi^{-1}(v)\cap X_0$  is dense in $\psi^{-1}(v)$ for each $v\in V$. 

Let us prove that  
$\overline{Q{\cdot}\xi}=\psi^{-1}(v)$.  This assertion will follow from the fact that the $Q$-orbits on $X_0$
 are separated by the invariants $F_i^\bullet$ with $i>a$.  Below, we describe 
these orbits and prove the assertion. 

Clearly, $\h_+^*$ and $\h_-^*$ are $H_+$-stable. Further,  for $\tilde u+z\in\q^*$ with 
$\tilde u\in\h_+^*$ and $z\in\h_-^+$, we have 
\[
H_-{\cdot}(\tilde u+z)=\exp(\h_-){\cdot}(\tilde u+z)=\tilde u+z+\ad^*(\h_-)(\tilde u+z)=\tilde u+z+\ad^*(\h_-)z
\]
with $\ad^*(\h_-)z\subset\h_+^*$, since $[\h_-,\h_-]_0=0$ and $[\h_-,\h_+]_0\subset \h_-$. 
By definition, $\ad^*(\h_-)z$ is zero on $\h_+^z$. Then by dimension reasons $\ad^*(\h_-)z=\Ann(\h_+^z)\subset\h^*$. 

We have to show that each $\xi_2\in\psi^{-1}(v)\cap X_0$ lies in $Q{\cdot}\xi$. It is safe to assume that 
$\xi_2=u_2+y$ with $u_2\in\h_+^*$ and $y=t+e$ as above. 
Then 
$\xi_2\in Q{\cdot}\xi$ if and only if  
\[
u_2\in H_+^y{\cdot}u + \ad^*(\h_-)y=\{ g{\cdot}u+\ad^*(h)y \mid g\in H_+^y,\, h\in\h_-\}=H_+^y{\cdot}u+\Ann(\h_+^y).
\]  
This is the case if and only if the restrictions of $u$ and $u_2$ to $\h_+^y$ lie in one and the same orbit of $H_+^y$. 
 
The  isotropy group $G^{y}$ is abelian and $H_+^y\subset G^y$. Hence $H_+^y$ acts on $(\h_+^y)^*$ 
trivially. Thus,  $\xi_2\in Q{\cdot}\xi$ if and only if the restrictions ${u}|_{\h_+^{y}}$ and ${u_2}|_{\h_+^{y}}$ 
coincide. Since $F_i^\bullet|_{y+\h_+^{y}}$ with $i>a$ form a  basis of $\h_+^{t+e}$, the equalities  
$F_i^\bullet(\xi)=F_i^\bullet(\xi_2)$ with $i > \dim a$ imply that ${u}|_{\h_+^{y}}={u_2}|_{\h_+^{y}}$. 
Thereby $\xi_2\in Q{\cdot}\xi$. 

Thus, all conditions of Lemma~\ref{lm-Ig} are satisfied and hence $\bbk[\q^*]^Q=C$. 
\end{proof}

\begin{rmk}        \label{extrem}
The cases with $\te_1=\te$ or $\{0\}$ in Theorem~\ref{thm:inv-h} has been treated before. 
If $\h_+=\be$, then $\cz\gS(\gt b\ltimes\ut_-^{\rm ab})=\bbk[F_1^\bullet,\ldots,F_\ell^\bullet]$
for {\bf any} Hilbert basis $(F_1,\dots, F_\ell)$, see~\cite[Theorem\,3.3]{alafe}.
If $\h_+=\ut$, then $\te_0=\te$ and $\cz\gS(\ut\ltimes(\g/\gt b)^{\rm ab})=\gS(\te)$, see~\cite[Prop.\,4.2]{bn}.
\end{rmk}

Recall from Section~\ref{sect:PC} that $s_0=\trdeg \bbk((\g/\h_+)^*)^{H_+}=
\trdeg\bbk(\h_+^\perp)^{H_+}$.
\begin{prop}     \label{prop-s}
For a solvable horospherical subalgebra $\h_+=\ut\oplus\te_1$, we have 
$s_0=\ell-\dim\te_1$.
\end{prop}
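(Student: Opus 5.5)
We compute $s_0$ by Rosenlicht's theorem: $s_0=\dim\h_+^\perp-\max_{\xi\in\h_+^\perp}\dim H_+{\cdot}\xi$. Identify $(\g/\h_+)^*$ with $\h_+^\perp\subset\g\simeq\g^*$ via the invariant form $(\ ,\,)$. Since $\h_+=\ut_+\oplus\te_1$, the form pairs $\ut_+$ with $\ut_-$ and is non-degenerate on $\te$. Hence $\h_+^\perp=\te_1^\perp\oplus\ut_+$, where $\te_1^\perp\subset\te$. This is exactly the space in Lemma~\ref{lm:dense}{\sf (ii)}, written there as $\te_1^\perp\oplus\ut$. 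It follows that $\dim\h_+^\perp=(\ell-\dim\te_1)+\dim\ut$.

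Lemma~\ref{lm:dense}{\sf (ii)} gives $\max_\xi\dim H_+{\cdot}\xi=\dim\ut$ for $\xi\in\te_1^\perp\oplus\ut$. Substituting into Rosenlicht's formula yields
\[
s_0=(\ell-\dim\te_1+\dim\ut)-\dim\ut=\ell-\dim\te_1 .
\]

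For a more transparent check, note that $H_+$ acts trivially modulo $\ut_+$ on $\h_+^\perp$, since $[\h_+,\te_1^\perp]\subset\ut_+$ and $[\h_+,\ut_+]\subset\ut_+$. So the projection $\h_+^\perp\to\te_1^\perp$ is $H_+$-invariant. This gives $\ell-\dim\te_1$ algebraically independent invariants, namely linear coordinates on $\te_1^\perp$, so $s_0\ge\ell-\dim\te_1$. By Lemma~\ref{lm:dense}{\sf (i)}, for generic $x\in\te_1^\perp$ the fibre $x+\ut_+$ contains a dense $H_+$-orbit. Therefore every rational invariant is constant on generic fibres, hence a rational function on $\te_1^\perp$, which gives equality.

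The only real obstacle is the upper bound, i.e.\ that generic fibres contain dense orbits. That is already settled by Lemma~\ref{lm:dense}, whose proof rests on Lemma~\ref{lm:t} and the stabiliser computation $\h_+^{x+e}=(\te_1\cap\z)\oplus\es^e$. So the proof here is a short dimension count.
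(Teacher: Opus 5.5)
Your proof is correct and is essentially the paper's own argument: identify $(\g/\h_+)^*$ with $\h_+^\perp=\te_1^\perp\oplus\ut_+$, apply Rosenlicht's theorem, and use Lemma~\ref{lm:dense}{\sf (ii)} to get $s_0=\dim\g-\dim\h_+-\dim\ut=\ell-\dim\te_1$. Your extra check, via the $H_+$-invariant projection $\h_+^\perp\to\te_1^\perp$ together with dense orbits in the generic fibres, is a valid restatement of the same dimension count and is not needed for the proof.
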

\begin{proof}
Recall that the $H_+$-module $(\g/\h_+)^*$ is isomorphic to $\ut\oplus\te_1^\perp\subset\g$. By 
Rosenlicht's theorem~\cite[\S\,2.3]{VP}, $s_0=\dim(\g/\h_+)-\max_\xi(\dim H_+{\cdot}\xi)$ for 
$\xi\in \ut\oplus\te_1^\perp$. By Lemma~\ref{lm:dense}{\sf (ii)}, we have 
$\max_\xi(\dim H_+{\cdot}\xi)=\dim\ut$. Hence 
$s_0=\dim\g-\dim\h_+-\dim\ut=\ell-\dim\te_1$. 
\end{proof}

\begin{cl}
For any horospherical splitting $\g=\h_+\oplus\h_-$, we have $s_0+s_\infty=\ell$.
\end{cl}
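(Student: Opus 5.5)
The plan is to apply Proposition~\ref{prop-s} twice, once to each summand of the splitting, and then add the two results.

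First, Proposition~\ref{prop-s} applies directly to $\h_+=\ut_+\oplus\te_1$ and gives
\[
s_0=\trdeg\bbk((\g/\h_+)^*)^{H_+}=\ell-\dim\te_1 .
\]

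Next, we treat $s_\infty=\trdeg\bbk((\g/\h_-)^*)^{H_-}$. Here $\h_-=\ut_-\oplus\te_0$ is a solvable horospherical subalgebra with respect to the opposite Borel subalgebra $\be_-$. The roles of $\ut_+$ and $\ut_-$ are interchanged by passing to the opposite triangular decomposition $\g=\ut_-\oplus\te\oplus\ut_+$. Alternatively, a Weyl group representative mapping $\be_+$ to $\be_-$ conjugates $\h_-$ to a horospherical subalgebra of $\be_+$ with toral part $\dim\te_0$.

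Nothing in Lemmas~\ref{lm:t} and \ref{lm:dense} or in Proposition~\ref{prop-s} uses more than the facts that $\te_1\subset\te$ and that $\ut_+$ is the nilradical of a Borel subalgebra containing $\te$. The one point to confirm is that Proposition~\ref{prop-s} does not require $\te_0$ to be the orthogonal complement of $\te_1$, only some complement, since here $\te_0$ is an arbitrary algebraic complement. Indeed, its proof uses only the toral part $\te_1$ of $\h_+$ and $\h_+^\perp=\ut\oplus\te_1^\perp$.

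So the same proposition, applied to $\h_-$ with $\be_-$, gives
\[
s_\infty=\ell-\dim\te_0 .
\]

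Finally, since $\te_0\oplus\te_1=\te$, we have $\dim\te_0+\dim\te_1=\ell$, and adding the two formulas yields
\[
s_0+s_\infty=2\ell-\ell=\ell .
\]

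No step here is a genuine obstacle. The only thing requiring care is the symmetry claim in the second step, namely that Proposition~\ref{prop-s} holds verbatim for horospherical subalgebras of $\be_-$. I would justify it by noting that everything in its proof is stated relative to an arbitrary choice of Borel subalgebra containing $\te$.
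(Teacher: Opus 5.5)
Your proposal is correct and matches the paper's argument: the corollary is stated without proof right after Proposition~\ref{prop-s}, and it follows by applying that proposition to $\h_+$ and, using the symmetric roles of the summands (with $\be_-$ in place of $\be_+$), to $\h_-$. This gives $s_0+s_\infty=(\ell-\dim\te_1)+(\ell-\dim\te_0)=\ell$. Your check that the proposition depends only on the toral part of the subalgebra in question, so that $\te_0$ need not be $\te_1^\perp$, is exactly the point to verify, and your justification of it is valid.
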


\subsection{A necessary condition for the presence of \ggs} 
\label{subs:necessary}
Let us translate Theorem~\ref{thm:ggs-h} in the setting of functions on $\g$. Using $(\ ,\,)$, we identify
$\gS(\g)$ with $\bbk[\g]$. In particular,  we consider elements of $\gS(\g)^\g$ as $G$-invariant
polynomials on $\g$. The celebrated theorem of Chevalley asserts that the restriction homomorphism
$\bbk[\g]\to \bbk[\te]$, $F\mapsto F\vert_\te$, induces an isomorphism $\bbk[\g]^G\simeq\bbk[\te]^W$, 
where $W$ is the {\it Weyl group\/} of $(\g,\te)$. 
\\ \indent
{\bf Convention.} In the rest of this section, we assume that $\te_0=\te_1^\perp$, i.e., 
$\te_0\oplus\te_1=\te$ is the orthogonal direct sum. Then the
condition that $F^\bullet_j\in \gS(\te_0)$ for $F\in\gS(\g)^G$ is equivalent to that
$F_j\vert_{\te_0}\not\equiv 0$, if $F$ is regarded as element of $\bbk[\g]^G$. 
\\ \indent
Hence parts {\sf (i),(ii)} of Theorem~\ref{thm:ggs-h} can be restated as follows.

\begin{prop}    \label{restate1}
A Hilbert basis $F_1,\dots,F_\ell$ of\/ $\bbk[\g]^G$ is a \ggs\ for $\h_+=\ut_+\oplus\te_1$ 
if and only if there are exactly $\dim\te_0$ indices $j\in\{1,\dots,\ell\}$ such that 
$F_j\vert_{\te_0}\ne 0$.
\end{prop}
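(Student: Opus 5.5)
The plan is to deduce Proposition~\ref{restate1} from Theorem~\ref{thm:ggs-h}{\sf (2)}. That result says $F_1,\dots,F_\ell$ is a \ggs\ for $\h_+$ iff $a=\#A=\dim\te_0$, where $A=\{j\mid F_j^\bullet\in\gS(\te_0)\}$. So it suffices to show, under the Convention $\te_0=\te_1^\perp$, that $F_j^\bullet\in\gS(\te_0)$ if and only if $F_j\vert_{\te_0}\not\equiv 0$, with $F_j$ viewed as a $G$-invariant polynomial on $\g$ via $(\ ,\,)$.

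First step: make the identification $\h_-^*\simeq\Ann(\h_+)$ explicit inside $\g$. The annihilator of $\h_+=\ut_+\oplus\te_1$ with respect to $(\ ,\,)$ is $\h_+^\perp=\ut_+\oplus\te_1^\perp=\ut_+\oplus\te_0$. Regard a bi-homogeneous component $F_{i,d-i}\in\gS^i(\h_+)\otimes\gS^{d-i}(\h_-)$ as a function on $\g^*=\h_+^*\oplus\h_-^*$, where $\h_+^*=\Ann(\h_-)$ and $\h_-^*=\Ann(\h_+)$. Restricting $F$ to $\h_-^*=\Ann(\h_+)$ kills every component with $i>0$ and returns $F_{0,d}$. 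Hence $F_{0,d}\neq0$ iff $F\vert_{\Ann(\h_+)}\not\equiv0$, and in that case $F^\bullet=F_{0,d}$, because $F^\bullet$ is the component of maximal $\h_-$-degree.

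Second step: compare $F\vert_{\ut_+\oplus\te_0}$ with $F\vert_{\te_0}$. A $G$-invariant polynomial on $\g$ is invariant under the torus $T$. Every monomial in coordinates of $\ut_+$ has strictly positive $T$-weight, so $F(x+u)=F(x)$ for $x\in\te_0$ and $u\in\ut_+$. Alternatively, $x+u$ lies in the closure of the orbit $\exp(\ut_+)\cdot x$ and is conjugate to $x$ in generic cases. Therefore $F\vert_{\ut_+\oplus\te_0}\not\equiv0$ iff $F\vert_{\te_0}\not\equiv0$. Moreover, in that case the restriction depends only on $\te_0$, which matches the first step of the proof of Theorem~\ref{thm:ggs-h}: a nonzero $F_{0,d}$ is $\te$-invariant and therefore lies in $\gS(\te_0)$.

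Third step: combine the two steps. We get $F_j^\bullet\in\gS(\te_0)$ iff $F_j\vert_{\te_0}\neq0$, so $a$ equals the number of $j$ with $F_j\vert_{\te_0}\neq0$, and Theorem~\ref{thm:ggs-h}{\sf (2)} gives the claim. The only delicate point I expect is bookkeeping the identification of $\gS(\te_0)$ (where $\te_0\subset\h_-$) with functions on $\te_1^\perp\subset\Ann(\h_+)$. Here I would check that $(\ ,\,)$ restricted to $\te$ pairs $\te_0$ with itself nondegenerately, which holds because $\te_0\perp\te_1$ and $\lg\ ,\,\rg$ is nondegenerate on $\te$.
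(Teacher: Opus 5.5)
Your proposal is correct and follows the paper's route. The paper obtains Proposition~\ref{restate1} by restating Theorem~\ref{thm:ggs-h}{\sf (2)} through the equivalence $F_j^\bullet\in\gS(\te_0)\Leftrightarrow F_j\vert_{\te_0}\not\equiv 0$ (under the convention $\te_0=\te_1^\perp$), which it only asserts; your restriction of $F$ to $\Ann(\h_+)\simeq\ut_+\oplus\te_0$, combined with the $T$-weight argument, supplies that justification. (Drop the ``alternatively'' remark, though. For non-regular $x$, e.g.\ $x=0$, the point $x+u$ need not lie in $\overline{\exp(\ut_+)\cdot x}$, and $\te_0$ may contain no regular elements at all, as in the $\mathfrak{sl}_{2n}$ example; the weight argument alone suffices.)
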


Using the Chevalley isomorphism above, one can work inside $\bbk[\te]^W$ and give a version of 
Proposition~\ref{restate1} without ``indices''. Clearly, the composition 
\[  
    \te_1^\perp=\te_0\hookrightarrow \te\stackrel{\tau}{\longrightarrow} \te/W=\spe\!(\bbk[\te]^W) \simeq \BA^\ell  
\] 
is a finite morphism. Therefore,
\begin{itemize}
\item \ $\tau(\te_0)$ is a closed subvariety of $\te/W$ and $\dim\tau(\te_0)=\dim\te_0$;
\item \ ${\sf res}_0(\bbk[\te]^W):= \bbk[\te]^W\vert_{\te_0}$ is a $\bbk$-algebra of Krull dimension 
$\dim\te_0$;
\item \ $\tau(\te_0)=\spe\!({\sf res}_0(\bbk[\te]^W)$.
\end{itemize}

\begin{prop}    \label{restate2}
Suppose that $\h_+=\te_1\oplus\ut_+\subset\be_+$ 
and\/ $\te_0=\te_1^\perp\subset\te$. 
Then
\\[.6ex]
\centerline{
there is a \ggs\ for\/ $\h_+$ \ $\Longleftrightarrow$ \ ${\sf res}_0(\bbk[\te]^W)$ is a graded polynomial ring.}
\end{prop}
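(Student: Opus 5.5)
We prove both directions with Proposition~\ref{restate1}, using Chevalley's isomorphism $\bbk[\g]^G\simeq\bbk[\te]^W$ to regard a Hilbert basis $F_1,\dots,F_\ell$ as homogeneous generators $f_j=F_j\vert_\te$ of $\bbk[\te]^W$. Put $R_0:={\sf res}_0(\bbk[\te]^W)$, a graded algebra of Krull dimension $d:=\dim\te_0$, generated by the restrictions $f_1\vert_{\te_0},\dots,f_\ell\vert_{\te_0}$. By Proposition~\ref{restate1}, $F_1,\dots,F_\ell$ is a \ggs\ for $\h_+$ if and only if exactly $d$ of the restrictions $f_j\vert_{\te_0}$ are nonzero.

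($\Rightarrow$) Let $F_1,\dots,F_\ell$ be a \ggs. Then only $d$ restrictions are nonzero, say $f_1\vert_{\te_0},\dots,f_d\vert_{\te_0}$. These $d$ elements generate $R_0$, and $\dim R_0=d$. A finitely generated domain of dimension $d$ that is generated by $d$ elements has these generators algebraically independent. Therefore $R_0=\bbk[f_1\vert_{\te_0},\dots,f_d\vert_{\te_0}]$ is a graded polynomial ring.

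($\Leftarrow$) This is the main step. Assume $R_0$ is a graded polynomial ring. Then it is freely generated by $d$ homogeneous elements, and these may be chosen among the generators $f_j\vert_{\te_0}$. To see this, let $R_0^+$ be the augmentation ideal. A homogeneous system $g_1,\dots,g_m$ is a minimal generating set of $R_0$ if and only if its images form a basis of $R_0^+/(R_0^+)^2$. For a polynomial ring this space has dimension $d$, so every minimal homogeneous generating set has $d$ elements and is algebraically independent. The $f_j\vert_{\te_0}$ span $R_0^+/(R_0^+)^2$, so we can pick indices $j_1,\dots,j_d$ whose images form a basis.

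It remains to modify the Hilbert basis so that the other restrictions vanish, and we do this by induction on degree. Order the $F_j$ by increasing degree. Take $j\notin\{j_1,\dots,j_d\}$. Then $f_j\vert_{\te_0}$ is a polynomial $P$ in the $f_{j_k}\vert_{\te_0}$, and we may take $P$ weighted-homogeneous of degree $\deg f_j$. Replace $F_j$ by $F_j':=F_j-P(F_{j_1},\dots,F_{j_d})$. Two degree cases arise.
\begin{itemize}
\item If $\deg F_{j_k}<\deg F_j$ for every $F_{j_k}$ appearing in $P$, the new family is still a Hilbert basis, since we subtract a polynomial in the other generators.
\item If some $F_{j_k}$ of the same degree occurs in $P$ with a nonzero linear coefficient, that coefficient is a scalar. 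Then $f_j\vert_{\te_0}$ is congruent modulo $(R_0^+)^2$ to a combination of basis images, so subtracting that linear part and the decomposable part again leaves a triangular change of generators. Hence the family is still a Hilbert basis.
\end{itemize}
After these replacements, exactly $d$ restrictions are nonzero, namely $f_{j_1}\vert_{\te_0},\dots,f_{j_d}\vert_{\te_0}$, and they are nonzero because they are free generators. By Proposition~\ref{restate1}, the modified basis is a \ggs.

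The main obstacle is the bookkeeping in this last step: we must check that the change of generators is invertible, which follows because it is unitriangular with respect to degree and to the chosen basis of indecomposables. We must also check that $\dim R_0=d$, which holds because $\tau\vert_{\te_0}$ is finite.
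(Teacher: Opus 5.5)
Your proof is correct and follows the paper's route. For $(\Rightarrow)$ you read it off from Proposition~\ref{restate1}, and you also make explicit why the $\dim\te_0$ nonzero restrictions are algebraically independent. For $(\Leftarrow)$ you build, via graded Nakayama and a triangular substitution, the adapted Hilbert basis that the paper simply quotes from Richardson's Lemma~4.1.

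Your degree case analysis is unnecessary. Since $j\notin\{j_1,\dots,j_d\}$ and $P$ involves only the untouched $F_{j_k}$, the substitution $F_j\mapsto F_j-P(F_{j_1},\dots,F_{j_d})$ is always an elementary automorphism of $\bbk[F_1,\dots,F_\ell]$.
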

\begin{proof}
``$\Leftarrow$". Here $p:\bbk[\te]^W \to {\sf res}_0(\bbk[\te]^W)$ is a surjective homomorphism 
of degree zero of graded polynomial rings.  By~\cite[Lemma\,4.1]{Ri}, there is a Hilbert basis 
$F_1,\dots,F_\ell$ of $\bbk[\te]^W$ such that $p(F_1),\dots,p(F_a)$ is a Hilbert basis of 
${\sf res}_0(\bbk[\te]^W)$ and  $p(F_j)=0$ for $j>a$. This yields a required \ggs

 ``$\Rightarrow$". This readily follows from Proposition~\ref{restate1}.
\end{proof}

Using a result of Richardson, we give a practical necessary condition for the existence of \ggs\ for a 
solvable horospherical subalgebra $\h_+$. By Proposition~\ref{restate2},
if $\h_+$ admits a \ggs, then ${\sf res}_0(\bbk[\te]^W)$ is a polynomial ring (in $\dim\te_0$ variables),
i.e., $\tau(\te_0)$ is an affine space. In particular, $\tau(\te_0)$ is normal. Set 
\[
N_W(\te_0)=\{w\in W\mid w(\te_0)\subset\te_0\} \ \& \ 
Z_W(\te_0)=\{w\in W\mid w(t)=t \text{ for all } t\in\te_0\} . 
\]
The finite group $W_0=N_W(\te_0)/Z_W(\te_0)$ acts on $\te_0$ and, clearly, 
${\sf res}_0(\bbk[\te]^W)\subset \bbk[\te_0]^{W_0}$. Since $\te_0$ is normal, the following is a special case
of Theorem~A in~\cite[\S\,3]{Ri}.
\begin{prop}[R.W.\,Richardson]   \label{prop:rwr} 
In the notation above, one has
 \\[.5ex]
\centerline{ $\tau(\te_0)$ is normal \ $\Longleftrightarrow$ \ ${\sf res}_0(\bbk[\te]^W)=\bbk[\te_0]^{W_0}$.}
\end{prop}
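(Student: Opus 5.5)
The plan is to deduce Proposition~\ref{prop:rwr} directly from Richardson's Theorem~A in~\cite[\S\,3]{Ri}, after checking that our situation matches its hypotheses. Since the statement is a specialisation, most of the work lies in setting up the dictionary between our objects and Richardson's.

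First I fix the setup. Here $W$ is a finite reflection group acting on $\te$, and $\te_0=\te_1^\perp$ is a linear subspace of $\te$. Let $\tau\colon \te\to\te/W$ be the quotient morphism. Its restriction to $\te_0$ is finite, so $\tau(\te_0)$ is closed with coordinate ring ${\sf res}_0(\bbk[\te]^W)$.

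Next I factor the finite morphism $\te_0\to\tau(\te_0)$ through $\te_0/W_0=\spe\bbk[\te_0]^{W_0}$. This works because restrictions of $W$-invariants are $N_W(\te_0)$-invariant, and $Z_W(\te_0)$ acts trivially on $\te_0$. The point to verify is that the induced map $\te_0/W_0\to\tau(\te_0)$ is bijective, i.e.\ that two points of $\te_0$ lying in one $W$-orbit lie in one $W_0$-orbit. This is the key input of Richardson's theorem: for $x,wx\in\te_0$, one modifies $w$ by an element of the stabiliser $W_x$ (a parabolic subgroup, by Steinberg) so that the new element normalises $\te_0$. Concretely, $\te_0$ and $w^{-1}\te_0$ both lie in $\te^{W_x}$-related configurations, and one uses conjugacy inside $W_x$ of the relevant subspaces. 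I expect this step to be the main obstacle. If the general argument is delicate, I would simply cite~\cite[Thm.\,A]{Ri}, which asserts exactly that the map $\te_0/W_0\to\tau(\te_0)$ is bijective and birational, indeed the normalisation of $\tau(\te_0)$.

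Granting this, the equivalence follows formally. The variety $\te_0/W_0$ is normal, being the quotient of a normal variety by a finite group, and it maps finitely and birationally onto $\tau(\te_0)$. Hence $\bbk[\te_0]^{W_0}$ is the integral closure of ${\sf res}_0(\bbk[\te]^W)$ in its fraction field. Therefore $\tau(\te_0)$ is normal if and only if ${\sf res}_0(\bbk[\te]^W)=\bbk[\te_0]^{W_0}$. The implication ``$\Leftarrow$'' also follows directly from the normality of $\te_0/W_0$.
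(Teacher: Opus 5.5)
Your closing deduction is right, and it is essentially the content of Richardson's Theorem~A, which the paper simply cites, using only that $\te_0$ is normal. If $\te_0/W_0\to\tau(\te_0)$ is finite and birational, then normality of $\te_0/W_0$ makes $\bbk[\te_0]^{W_0}$ the integral closure of ${\sf res}_0(\bbk[\te]^W)$ in its fraction field, and the equivalence is formal.

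The gap is the step you call the key input. Bijectivity of $\te_0/W_0\to\tau(\te_0)$ is false for a general $\te_0=\te_1^\perp$. So your sketched argument via Steinberg's theorem (modifying $w$ by an element of $W_x$) cannot be completed. Nor can bijectivity be what Theorem~A supplies, since the paper applies that theorem to arbitrary subspaces $\te_0$.

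Here is a counterexample. Take $\g=\mathfrak{gl}_3$ and $\te_1=\bbk\,{\sf diag}(2,0,-1)$, which is algebraic and has non-degenerate trace form. Then $\te_0=\{{\sf diag}(a,b,2a)\}$. No non-identity permutation sends $(2,0,-1)$ to a multiple of itself, so $N_W(\te_0)=\{1\}$ and $W_0=\{1\}$. Yet ${\sf diag}(1,4,2)$ and ${\sf diag}(2,1,4)$ are distinct points of $\te_0$ that are $W$-conjugate.

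What your formal argument actually needs is only generic injectivity, and that is elementary. For $w\notin N_W(\te_0)$, the linear subspace $\te_0\cap w^{-1}\te_0$ is proper in $\te_0$. So on the dense open $N_W(\te_0)$-stable set $U=\te_0\setminus\bigcup_{w\notin N_W(\te_0)}(\te_0\cap w^{-1}\te_0)$ one has $Wx\cap\te_0=N_W(\te_0)x$ for every $x\in U$. In particular, $U/W_0$ is the full preimage of $\tau(U)$, which contains a dense open subset of $\tau(\te_0)$. Hence the finite surjective morphism $\te_0/W_0\to\tau(\te_0)$ is injective over a dense open set, and it is birational because $\mathsf{char}\,\bbk=0$. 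Replace ``bijective'' by this statement and your proof becomes a complete, self-contained version of what the paper only cites.
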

A consequence that is relevant for us is
\begin{cl}     \label{cor:suffice}
If\/ ${\sf res}_0(\bbk[\te]^W)\subsetneqq \bbk[\te_0]^{W_0}$, then there is no \ggs\ for $\h_+$.
\end{cl}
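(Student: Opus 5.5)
We argue by contraposition, combining Proposition~\ref{restate2} with Richardson's criterion in Proposition~\ref{prop:rwr}. Suppose $\h_+$ admits a \ggs. By Proposition~\ref{restate2}, ${\sf res}_0(\bbk[\te]^W)$ is then a graded polynomial ring. Its Krull dimension is $\dim\te_0$, so $\tau(\te_0)=\spe({\sf res}_0(\bbk[\te]^W))\simeq\BA^{\dim\te_0}$ is an affine space. In particular, $\tau(\te_0)$ is a normal variety.

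Since $\te_0$ is a linear subspace, and hence normal, the hypotheses of Proposition~\ref{prop:rwr} (the special case of Richardson's Theorem~A) are satisfied. Normality of $\tau(\te_0)$ therefore gives ${\sf res}_0(\bbk[\te]^W)=\bbk[\te_0]^{W_0}$. This contradicts the assumption ${\sf res}_0(\bbk[\te]^W)\subsetneqq\bbk[\te_0]^{W_0}$. Hence no \ggs\ exists for $\h_+$.

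The only point that needs care is the inclusion ${\sf res}_0(\bbk[\te]^W)\subset\bbk[\te_0]^{W_0}$, which makes the comparison meaningful. It holds because any $w\in N_W(\te_0)$ preserves $\te_0$, and restricting a $W$-invariant function to $\te_0$ yields a function invariant under the induced action of $N_W(\te_0)$. That action factors through $W_0=N_W(\te_0)/Z_W(\te_0)$.

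Beyond this inclusion there is no real obstacle: the substantive input is Richardson's theorem, which we take as given. We should still confirm that its hypotheses are satisfied in the form used in Proposition~\ref{prop:rwr}. Specifically, $\tau|_{\te_0}$ must be the composition of the inclusion $\te_0\hookrightarrow\te$ with the quotient map $\tau\colon\te\to\te/W$, and it must be finite, which was noted just before Proposition~\ref{restate2}.
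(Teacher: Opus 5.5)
Your proof is correct and follows essentially the paper's own argument: a \ggs\ makes ${\sf res}_0(\bbk[\te]^W)$ a polynomial ring by Proposition~\ref{restate2}, so $\tau(\te_0)$ is an affine space and hence normal, and Richardson's criterion (Proposition~\ref{prop:rwr}) then forces ${\sf res}_0(\bbk[\te]^W)=\bbk[\te_0]^{W_0}$. Your separate check of the inclusion ${\sf res}_0(\bbk[\te]^W)\subset\bbk[\te_0]^{W_0}$ is correct but not needed for the contrapositive, since the strict inclusion is already part of the hypothesis.
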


\section{A  horospherical splitting of $\g\dotplus\te$.} 
\label{sec:g+t}

\noindent 
As before, $\g=\ut_+\oplus\te\oplus\ut_-$ is a fixed triangular decomposition of  $\g$ and
$\be_\pm=\te\oplus\ut_\pm$. Consider the reductive Lie algebra $\tilde\g=\g\dotplus\te$, where the second summand is another copy of the Cartan subalgebra of $\g$. Then 
$\bb(\tilde\g)=\bb(\g)+\ell$ and $\rk\tilde\g=2\ell$. Here $\te\dotplus\te\subset\tilde\g$ splits into the sum of 
$\te_0=\{(t,t) \mid t\in\te\}$ and $\te_1:=\{(t,-t)\mid t\in\te\}$. We equip $\tilde\g$ with the invariant 
bilinear form $(\ ,\,)\dotplus\lg\ ,\,\rg$. Then $\te_0$ and $\te_1$ are orthogonal w.r.t. this form.
To distinguish the two copies of $\te$, the direct summand in $\tilde\g$ is denoted below by 
$\te_{\sf dir}$.

Set $\tth=\ut_+\oplus\te_1$ and $\trr=\ut_-\oplus\te_0$. Then both subalgebras are spherical in $\tilde\g$ 
and isomorphic to a Borel subalgebra of $\g$, and also $\tth\cap\trr=\{0\}$. Hence $\tilde\g=\tth\oplus\trr$ 
is a horospherical splitting. Following our general scheme, we set 
$\tilde\g_{(0)}=\tth\ltimes{\trr}^{\rm ab}$ and $\tilde\g_{(\infty)}=\trr\ltimes{\tth}^{\rm ab}$. Then 
$\tilde\cz_0$ and $\tilde\cz_\infty$ are the corresponding Poisson centres. 

\begin{rmk}   \label{rem:Drinfeld}
It is easily seen that if $\be$ is a Borel subalgebra of $\g$, then $\tth\simeq\trr\simeq \be$ and 
\[
    \tilde\g_{(0)}\simeq \tilde\g_{(\infty)}\simeq\be\ltimes (\g/\ut)^{\sf ab}= \be\ltimes(\be^*)^{\sf ab}. 
\]
In~\cite{BR}, the Lie algebra $\mathcal I(\be):=\be\ltimes(\be^*)^{\sf ab}$ is called the {\it Drinfeld double} 
of  $\be$. Our interpretation of $\mathcal I(\be)$ as a contraction of $\g\dotplus\te_{\sf dir}$ is new. This can be generalised to the arbitrary parabolic subalgebras of $\g$.
We hope to elaborate on this topic in a forthcoming publication.
\end{rmk}
Let $F_1,\dots,F_\ell$ be a Hilbert basis of $\gS(\g)^\g$ and  $\xi_1,\ldots,\xi_\ell$ a 
basis for  $\te_{\sf dir}$. They form together a Hilbert basis of $\gS(\tilde\g)^{\tilde\g}$. In order to obtain 
a \ggs\ for $\tth$ or $\trr$, certain modifications of this Hilbert basis is needed.
Here $\xi_i^\bullet$ and $(\xi_i)_\bullet$ with $1\le i\le \ell$ form bases of $\te_0$ and $\te_1$, 
respectively. As before, we identify $\te$ with $\te^*\subset\g^*$. For any $F\in\gS(\g)^\g$, set 
$f=F\vert_{\te}$ and let $\fb$ be a copy of $f$ in $\gS(\te_{\sf dir})$.  
\begin{prop}      
\label{lm:dva-ggs}   
Using the notation above, we have
\begin{itemize}
\item[\sf (i)] \ $\xi_1,\ldots,\xi_\ell, F_1 - \fb_1, \ldots, F_\ell- \fb_\ell$ is a\/ {\sf g.g.s.} for  $\tth$;
\item[\sf (ii)] \ $\xi_1,\ldots,\xi_\ell, F_1 - (-1)^{d_1} \fb_1, \ldots, F_\ell-(-1)^{d_\ell} \fb_\ell$ is a\/ {\sf g.g.s.} for $\trr$.
\end{itemize}
\end{prop}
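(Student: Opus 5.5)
The plan is to deduce both parts from the criterion in Proposition~\ref{restate1} (equivalently, Theorem~\ref{thm:ggs-h}{\sf (2)}), applied to the reductive Lie algebra $\tilde\g=\g\dotplus\te_{\sf dir}$.

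\emph{Setting up the horospherical data.} The Cartan subalgebra of $\tilde\g$ is $\tilde\te=\te\dotplus\te_{\sf dir}$, and $\tilde\be_+=\be_+\dotplus\te_{\sf dir}$ is a Borel subalgebra. Its nilradical is $\ut_+$. Hence $\tth=\ut_+\oplus\te_1$ is a solvable horospherical subalgebra with toral part $\te_1$, and $\trr=\ut_-\oplus\te_0$ is the complementary one. The form $(\ ,\,)\dotplus\lg\ ,\,\rg$ makes $\te_0$ and $\te_1$ orthogonal, so the Convention of Section~\ref{subs:necessary} holds: $\te_0=\te_1^\perp$ in $\tilde\te$.

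\emph{Hilbert basis.} First I check that each list is a Hilbert basis of $\gS(\tilde\g)^{\tilde\g}$. Each $\fb_j$ lies in $\gS(\te_{\sf dir})=\bbk[\xi_1,\dots,\xi_\ell]$. So passing from $\xi_1,\dots,\xi_\ell,F_1,\dots,F_\ell$ to the new list is a triangular change of generators, and the new elements are still homogeneous of the same degrees.

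\emph{Part {\sf (i)}.} By Proposition~\ref{restate1}, applied with $\ell$ replaced by $2\ell$ and $\dim\te_0=\ell$, it suffices to show that exactly $\ell$ of the $2\ell$ generators have nonzero restriction to $\te_0=\{(t,t)\}$. I identify $\gS(\tilde\g)$ with $\bbk[\tilde\g]$ via the invariant form, so that the copy $\te_{\sf dir}\simeq\te$ is identified compatibly with $\te\subset\g$. Then:
\begin{itemize}
\item each $\xi_i$, being a nonzero linear form on $\te_{\sf dir}$, restricts nontrivially to $\te_0$, since $\te_0$ projects isomorphically onto $\te_{\sf dir}$;
\item $(F_j-\fb_j)(t,t)=f_j(t)-f_j(t)=0$ for all $j$.
\end{itemize}
So exactly $\ell=\dim\te_0$ indices survive, and the list is a \ggs\ for $\tth$.

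\emph{Part {\sf (ii)}.} The roles of $\tth$ and $\trr$ are symmetric: use the opposite Borel subalgebra $\be_-\dotplus\te_{\sf dir}$, which contains $\trr$, and the complement $\te_1=\te_0^\perp$ to its toral part. Theorem~\ref{thm:ggs-h} and Proposition~\ref{restate1} apply verbatim with $\te_1$ in the role of the ``$\te_0$''. Now I restrict to $\te_1=\{(t,-t)\}$:
\begin{itemize}
\item each $\xi_i$ restricts nontrivially;
\item by homogeneity, $(F_j-(-1)^{d_j}\fb_j)(t,-t)=f_j(t)-(-1)^{d_j}f_j(-t)=f_j(t)-f_j(t)=0$.
\end{itemize}
Again exactly $\ell=\dim\te_1$ generators survive.

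\emph{Main obstacle.} The only delicate point is bookkeeping. One must make sure that the identification $\gS(\tilde\g)\simeq\bbk[\tilde\g]$ sends $\fb_j$ to the function $(t,s)\mapsto f_j(s)$ with the same normalisation as $F_j|_\te=f_j$; this is why the form on $\te_{\sf dir}$ is chosen to be $\lg\ ,\,\rg$. One must also check that the Convention's equivalence ``$F^\bullet\in\gS(\te_0)$ iff $F|_{\te_0}\ne0$'' is applied with the correct summand in each of the two cases. Once these identifications are fixed, both parts reduce to the two one-line evaluations above.
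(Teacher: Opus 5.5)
Your proposal is correct and is essentially the paper's argument. Both reduce to Theorem~\ref{thm:ggs-h}{\sf (2)} by showing that exactly the $\ell$ elements $\xi_i$ have top component in $\gS(\te_0)$ (resp.\ $\gS(\te_1)$), because the leading parts of $F_j$ and $\fb_j$ (resp.\ $(-1)^{d_j}\fb_j$) cancel; the paper computes these components by substituting the $\trr$-components $\frac{1}{2}(h_i+\xi_i)$ (resp.\ the $\tth$-components) into $f_j$ and $\fb_j$, whereas you use the dual restriction criterion of Proposition~\ref{restate1} and evaluate at $(t,\pm t)$.
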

\begin{proof}
Let $\{h_i \mid 1\le i\le\ell\}$ be a basis of $\te\subset\g$. We assume that the identification of 
$\te$ and $\te_{\sf dir}$ is given by $h_i\mapsto\xi_i$. Then $\{h_i+\xi_i\mid 1\le i\le\ell\}$ is a basis for
$\te_0$ and $\{h_i-\xi_i\mid 1\le i\le\ell\}$ is a basis for $\te_1$. This yields the bi-homogeneous 
decompositions 
\beq    \label{eq:bi-hom-basis}
    h_i=\frac{1}{2}\bigl((h_i+\xi_i)+(h_i-\xi_i)\bigr) \quad \& \quad  
    \xi_i=\frac{1}{2}\bigl((h_i+\xi_i)+(-h_i+\xi_i)\bigr). 
\eeq
For every $F_j$, the restriction $f_j=F_j\vert_{\te}\in \gS(\te)$ is a non-zero $W$-invariant polynomial in  
$h_1,\dots,h_\ell$. Then $\fb_j\in\gS(\te_{\sf dir})$ is obtained from $f_j$ by replacing each $h_i$ with 
$\xi_i$. 

{\sf (i)} It  follows from~\eqref{eq:bi-hom-basis} that the bi-homogeneous component with maximal
$\trr$-degree, $\fb_j^\bullet$, is obtained from $\fb_j$ by replacing each $\xi_i$ with its $\trr$-component, 
i.e., $\frac{1}{2}(h_i+\xi_i)$. Likewise, the bi-homogeneous component $F_j^\bullet$ is obtained from 
$f_j$ by replacing each $h_i$ with its $\trr$-component, i.e., $\frac{1}{2}(h_i+\xi_i)$. Consequently, 
$F_j^\bullet$ and $\fb_j^\bullet$ belong to $\gS(\te_0)\subset\gS(\trr)$, and they are equal. Taking 
$F_j{-}\fb_j$  cancels out this highest component, which implies that 
$(F_j{-}\fb_j)^\bullet \not\in\gS(\te_0)$ and hence $\deg_{\trr}(F_j{-}\fb_j)^\bullet\le d_j-1$ for 
$j=1,\dots,\ell$. Since $\dim\te_0=\ell$, part {\sf (i)} follows  from Theorem~\ref{thm:ggs-h}.

{\sf (ii)} A similar idea works for $(F_j)_\bullet$ and $(\fb_j)_\bullet$, i.e., the bi-homogeneous 
components with maximal $\tth$-degree. To obtain $(\fb_j)_\bullet$, we replace each $\xi_i$ in $\fb_j$ 
with its $\tth$-component, i.e., $\frac{1}{2}(-h_i+\xi_i)$. Similarly, to obtain $(F_j)_\bullet$, we replace 
each $h_i$ in $f_j$ with its $\tth$-component, i.e., $\frac{1}{2}(h_i-\xi_i)$. Hence $F_j^\bullet$ 
and $\fb_j^\bullet$ belong to $\gS(\te_1)\subset\gS(\tth)$, and they are equal up to a sign.
Taking $F_j{-}(-1)^{d_j}\fb_j$ cancels out these components and therefore
$\deg_{\tth}(F_j{-}(-1)^{d_j}\fb_j)^\bullet\le d_j-1$ for $j=1,\dots,\ell$.
Since $\dim\te_1=\ell$, part {\sf (i)} follows from Theorem~\ref{thm:ggs-h}.
\end{proof}
Since there are \ggs\ for $\tth$ and $\trr$, Theorem~\ref{thm:inv-h} implies that the Poisson
centres $\tilde\cz_0$ and $\tilde\cz_\infty$ are polynomial rings, i.e., we obtain
\begin{cl}                    \label{cor:double}
For the Drinfeld double $I(\be)$, the Poisson centre $\cz\gS(\mathcal I(\be))$ is a polynomial ring.
\end{cl}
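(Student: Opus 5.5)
The plan is to apply Theorem~\ref{thm:inv-h} to the reductive Lie algebra $\tilde\g=\g\dotplus\te_{\sf dir}$ with its horospherical splitting $\tilde\g=\tth\oplus\trr$, and then transport the conclusion to $\mathcal I(\be)$ through the isomorphism of Remark~\ref{rem:Drinfeld}.

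First I check that this splitting is really an instance of the setup of Section~\ref{sect:horo}. The triangular decomposition of $\tilde\g$ is $\ut_+\oplus(\te\dotplus\te_{\sf dir})\oplus\ut_-$, with Cartan subalgebra $\te\dotplus\te_{\sf dir}$. This Cartan subalgebra decomposes as $\te_0\oplus\te_1$, and the two pieces are orthogonal for the form $(\ ,\,)\dotplus\lg\ ,\,\rg$. So $\tth=\ut_+\oplus\te_1$ is a solvable horospherical subalgebra, and $\trr=\ut_-\oplus\te_0$ is its complementary subalgebra. This matches the Convention of Section~\ref{subs:necessary}, although Theorem~\ref{thm:inv-h} itself needs only the splitting $\te=\te_0\oplus\te_1$. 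Proposition~\ref{lm:dva-ggs}{\sf (i)} gives a \ggs\ for $\tth$. Hence Theorem~\ref{thm:inv-h}, applied with $\g$ replaced by $\tilde\g$ and $\h_+=\tth$, shows that $\tilde\cz_0=\cz\gS(\tth\ltimes\trr^{\rm ab})$ is a polynomial ring. By the same theorem, it is freely generated by a basis of $\te_0$ together with the components $(F_j-\fb_j)^\bullet$.

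Next I identify $\tilde\g_{(0)}=\tth\ltimes\trr^{\rm ab}$ with $\mathcal I(\be)=\be\ltimes(\be^*)^{\sf ab}$, as asserted in Remark~\ref{rem:Drinfeld}. The map $\te_1\to\te$, $(t,-t)\mapsto t$, extended by the identity on $\ut_+$, gives an isomorphism $\tth\simeq\be_+$. The $\tth$-module $\tilde\g/\tth$ is isomorphic to $(\tilde\g/\tth^\perp)^*$, and $\tth^\perp=\ut_+\oplus\te_1$ is the space paired with $\tth$. Under the isomorphism $\tth\simeq\be$, this module becomes $\be^*$, because $\tilde\g/\tth\simeq\trr=\ut_-\oplus\te_0$ is dual to $\tth$ via the invariant form. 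The point to verify is that the induced $\tth$-action on $\trr$ coincides with the coadjoint action, and this follows from the invariance of the form.

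A Lie algebra isomorphism induces an isomorphism of Poisson centres of symmetric algebras. Therefore $\cz\gS(\mathcal I(\be))\simeq\tilde\cz_0$ is a polynomial ring. As an alternative route, one could also use $\trr\simeq\be_-\simeq\be$ together with Proposition~\ref{lm:dva-ggs}{\sf (ii)}.

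The main obstacle is the identification in Remark~\ref{rem:Drinfeld}, namely checking that the module structure on $\trr^{\rm ab}$ matches the coadjoint module $\be^*$. Everything else is a direct citation of Theorem~\ref{thm:inv-h} and Proposition~\ref{lm:dva-ggs}.
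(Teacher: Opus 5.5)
Your argument is the paper's own: the \ggs\ of Proposition~\ref{lm:dva-ggs} feeds Theorem~\ref{thm:inv-h} for $\tilde\g=\g\dotplus\te_{\sf dir}$, so $\tilde\cz_0$ (and likewise $\tilde\cz_\infty$) is a polynomial ring. Remark~\ref{rem:Drinfeld} then identifies $\tilde\g_{(0)}$ with $\mathcal I(\be)$. Since the paper only says this identification is ``easily seen'', citing the remark is enough. However, your own sketch of it, which you flag as the main obstacle, is wrong as written.

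\textbf{What goes wrong.} The orthogonal complement of $\tth=\ut_+\oplus\te_1$ is $\tth^\perp=\ut_+\oplus\te_0$, not $\ut_+\oplus\te_1$. Moreover $(\te_0,\te_1)=0$, so the invariant form restricted to $\trr\times\tth$ is degenerate: it kills $\te_0$ on one side and $\te_1$ on the other. Hence $\trr=\ut_-\oplus\te_0$ is \emph{not} dual to $\tth$ via that form. Also, $(\tilde\g/\tth^\perp)^*\simeq(\tth^\perp)^\perp=\tth$ is the adjoint module, not the coadjoint one.

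\textbf{A correct check.} Since $\te_{\sf dir}$ is central in $\tilde\g$ and $\g\cap\tth=\ut_+$, the inclusion $\g\subset\tilde\g$ induces a linear isomorphism $\g/\ut_+\simeq\tilde\g/\tth$. This isomorphism is equivariant for the Lie algebra isomorphism $\tth\simeq\be$, $(t,-t)+u\mapsto t+u$, because $[(t,-t)+u,y]=[t+u,y]$ for all $y\in\g$. Finally, $\g/\ut_+\simeq(\ut_+^\perp)^*=\be^*$ as $\be$-modules, via the form on $\g$.

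Equivalently, you can write $\tilde\g/\tth\simeq(\tth^\perp)^*$ and note that $u+(s,s)\mapsto u+(s,-s)$ is a $\tth$-module isomorphism $\tth^\perp\simeq\tth$. With this repair, the rest of your proof goes through unchanged.
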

Note that here $\tilde\cz_0$ and $\tilde\cz_\infty$ are generated by the related highest components, since 
$\{\xi_i^\bullet \mid 1\le i\le\ell\}$ and  $\{(\xi_i)_\bullet\mid 1\le i\le\ell\}$ are bases of $\te_0$ and $\te_1$, 
respectively. This agrees with Remark~\ref{rem:generated-by}, because here 
$\dim\te_0{=}\dim\te_1{=}\dim\z(\tilde\g)=\ell$. It is worth mentioning, however, that $I(\be)$ does not 
always have the {\sl codim}--$2$ property. For, the image of $\te$ in $\g/\ut$ is the centre of $I(\be)$ and 
hence $I(\be)\simeq \bigl(\be\ltimes (\g/\be)^{\sf ab}\bigr)\dotplus \te$. By~\cite[Theorem\,4.2]{alafe}, the  
Lie algebra $\be\ltimes\ut_-^{\sf ab}$ has the {\sl codim}--$2$ property only for $\g=\sln$.

If all the degrees $\{d_j\}$ are even, then Proposition~\ref{lm:dva-ggs} provides a common \ggs\ for 
$\tth$ and $\trr$. Then Theorem~\ref{thm:ggs2} already guarantees us that $\gZ_{\lg\tth,\trr\rg}$ is a 
polynomial ring. (If $\g$ is simple, then odd degrees occur only for $\g$ of type $\eus{A}_{n}$ ($n\ge 2$), 
$\eus{D}_{2n+1}$, and $\eus{E}_{6}$.) Nevertheless, the polynomiality of $\gZ_{\lg\tth,\trr\rg}$ follows in 
general from other results of Section~\ref{sect:PC}. Our main result on the horospherical splitting 
$\tilde\g=\tth\oplus\trr$ can be summarised as follows.

\begin{thm}          \label{thm:bb}
The\/ \PC\ subalgebra $\gZ_{\lg\tth,\trr\rg}\subset\gS(\g\dotplus\te)$ is a polynomial ring. It is freely 
generated by  $\{\xi_i^\bullet, (\xi_i)_\bullet\mid 1\le i\le\ell\}$, and the bi-homogeneous components 
$\{(F_i)_{j,d_i-j}\mid 1\le i\le \ell,\ 0< j<d_i\}$. 
\end{thm}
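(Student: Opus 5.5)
The plan is to apply Theorem~\ref{thm:sum} to the horospherical splitting $\tilde\g=\tth\oplus\trr$ of the reductive Lie algebra $\tilde\g=\g\dotplus\te_{\sf dir}$. This splitting is non-degenerate, since both summands are spherical. Theorem~\ref{thm:sum} has two hypotheses: $\tilde\cz_0$ and $\tilde\cz_\infty$ must be polynomial rings, and $s_0+s_\infty=\rk\tilde\g=2\ell$.

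\textbf{The sum $s_0+s_\infty$.} By the Corollary to Proposition~\ref{prop-s}, every horospherical splitting has $s_0+s_\infty=\rk$. Concretely, $s_0=2\ell-\dim\te_1=\ell$, and symmetrically $s_\infty=\ell$.

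\textbf{Polynomiality of the centres.} Proposition~\ref{lm:dva-ggs} gives a \ggs\ for $\tth$ and one for $\trr$. By Theorem~\ref{thm:inv-h}, $\tilde\cz_0$ is then freely generated by a basis of $\te_0$ together with those highest components of the \ggs\ not lying in $\gS(\te_0)$. Here the center of $\tilde\g_{(0)}$ is $\te_0$, of dimension $\ell$. The \ggs\ has exactly $\ell$ elements with $F^\bullet\in\gS(\te_0)$, namely the $\xi_i$, since by the proof of Proposition~\ref{lm:dva-ggs} the $F_j-\fb_j$ do not have this property. Hence $\tilde\cz_0$ is freely generated by $\{\xi_i^\bullet\}$, a basis of $\te_0$, and by $\{(F_j-\fb_j)^\bullet\}$. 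Likewise $\tilde\cz_\infty$ is polynomial, and $\{(\xi_i)_\bullet\}$ spans $\te_1$.

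\textbf{Applying Theorem~\ref{thm:sum}.} The Hilbert basis of $\gS(\tilde\g)^{\tilde\g}$ used is $\xi_1,\dots,\xi_\ell,F_1,\dots,F_\ell$. The $\xi_i$ have degree $1$, so they contribute no components with $0<j<d$. Theorem~\ref{thm:sum} then says that $\gZ_{\lg\tth,\trr\rg}$ is freely generated by three sets:
\begin{itemize}
\item the middle components $(F_i)_{j,d_i-j}$ for $0<j<d_i$;
\item a Hilbert basis of $\tilde\cz_0$;
\item a Hilbert basis of $\tilde\cz_\infty$.
\end{itemize}
The count is $\bb(\tilde\g)-2\ell+\ell+\ell=\bb(\tilde\g)$.

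It remains to pass to the generators listed in the statement, which I expect to be the main obstacle. The set in the statement contains only $\xi_i^\bullet$ and $(\xi_i)_\bullet$, not the full centres. The count already works: there are $2\ell$ elements $\xi_i^\bullet,(\xi_i)_\bullet$ and $\sum_i(d_i-1)=\bb(\g)-\ell$ middle components, giving $\bb(\g)+\ell=\bb(\tilde\g)$ in total. So it suffices to show that these elements generate $\gZ_{\lg\tth,\trr\rg}$; algebraic independence then follows from $\trdeg=\bb(\tilde\g)$ (Theorem~\ref{thmZ}).

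For generation, each $(F_j-\fb_j)^\bullet$ must lie in the algebra they generate. Write $\fb_j$ via~\eqref{eq:bi-hom-basis} as a polynomial in $\te_0$ and $\te_1$, which are spanned by the $\xi_i^\bullet$ and $(\xi_i)_\bullet$. Every bi-homogeneous component of $F_j-\fb_j$ is then a polynomial in the middle components of $F_j$, in $F_j^\bullet,(F_j)_\bullet$, and in elements of $\te_0,\te_1$. The extremes $F_j^\bullet\in\gS(\te_0)$ and $(F_j)_\bullet\in\gS(\te_1)$ are themselves polynomials in $\te_0$ and $\te_1$ respectively. The same argument applies to $\tilde\cz_\infty$. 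Hence the listed set generates $\tilde\cz_0$, $\tilde\cz_\infty$ and all components of the Hilbert basis, and by Theorem~\ref{thmZ} it generates $\gZ_{\lg\tth,\trr\rg}$.
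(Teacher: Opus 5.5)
Your proof is correct and follows the paper's skeleton: Proposition~\ref{lm:dva-ggs} and Theorem~\ref{thm:inv-h} make $\tilde\cz_0,\tilde\cz_\infty$ polynomial, and then Theorem~\ref{thm:sum} is invoked. The paper stops at ``Theorem~\ref{thm:sum} provides the desired description of free generators''. Your last paragraph supplies the step that is actually needed to reach the generators in the statement. First, $(F_j-\fb_j)^\bullet$ and $(F_j-(-1)^{d_j}\fb_j)_\bullet$ differ from bi-homogeneous components of $F_j$ by elements of $\gS(\te_0)\otimes\gS(\te_1)$. Second, the extreme components of $F_j$ lie in $\gS(\te_0)$ and $\gS(\te_1)$. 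With Theorem~\ref{thmZ}, this shows that the listed $\bb(\tilde\g)$ elements generate $\gZ_{\lg\tth,\trr\rg}$, and $\trdeg\gZ_{\lg\tth,\trr\rg}=\bb(\tilde\g)$ gives freeness. That paragraph is a complete proof on its own, so the computation $s_0+s_\infty=2\ell$ and the appeal to Theorem~\ref{thm:sum} can be dropped.

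You should drop or fix the intermediate claim, though. The count $\bb(\tilde\g)-2\ell+\ell+\ell$ for ``middle components, a Hilbert basis of $\tilde\cz_0$, a Hilbert basis of $\tilde\cz_\infty$'' is wrong. By your own description, each of these Hilbert bases has $2\ell=\rk\tilde\g$ elements: a basis of $\te_0$ (resp. $\te_1$) plus $\ell$ elements coming from the $F_j$. Read literally, the three sets are too large to be free generators. For example, take $\g=\tri$ with $F=h^2+4ef$, $a=h+\xi_1$, $b=h-\xi_1$. The middle component is $\tfrac12 ab+4ef$, while $\tilde\cz_0=\bbk[a,ab+4ef]$ and $\tilde\cz_\infty=\bbk[b,ab+4ef]$. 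So at least four distinct elements occur, although $\bb(\tilde\g)=3$. The formula you reproduced is the count from the proof of Theorem~\ref{thm:sum}. It tacitly assumes that only $s_0$ (resp. $s_\infty$) members of the Hilbert basis of $\cz_0$ (resp. $\cz_\infty$) are new modulo the middle components. Here that has to be checked, and your final paragraph is exactly that check.
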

\begin{proof}
By Proposition~\ref{prop-s}, we have $s_0=s_\infty=\ell$. Therefore $s_0+s_\infty=2\ell=\rk\tilde\g$.
Since there are \ggs\ for both $\tth$ and $\trr$ (Proposition~\ref{lm:dva-ggs}), the algebras 
$\tilde\cz_0$ and $\tilde\cz_\infty$ are polynomial rings, see Theorem~\ref{thm:inv-h}. Thus, 
Theorem~\ref{thm:sum} applies here and it provides the desired description of free generators for  
$\gZ_{\lg\tth,\trr\rg}$. 
\end{proof}

\section{Semi-horospherical splittings and involutions} 
\label{sect:ohne}

\noindent
A splitting $\g=\h\oplus\rr$ is said to be {\it semi-horospherical}, if either $\h$ or $\rr$ is solvable and 
horospherical. Let $\sigma$ be an involution of a simple Lie algebra $\g$ and $\g=\g_0\oplus\g_1$ the 
corresponding $\BZ_2$-grading. Write $G_0$ for the connected subgroup of $G$ with $\Lie G_0=\g_0$.
We say that $\sigma$ is $S$-{\it regular}, if $\g_1$ contains regular semisimple elements of $\g$. Our 
first goal is to attach a semi-horospherical splitting to an $S$-regular involution $\sigma$.
This possibility is briefly discussed in \cite[Remark\,6.3]{bn}. 
Then we look at the related \PC\ subalgebras. 
We refer to~\cite{kr71} for basic notions and invariant-theoretic results related to $\BZ_2$-gradings.

Let $\ce\subset\g_1$ be a Cartan subspace. Since $\ce$ contains regular semisimple elements, 
$\te:=\g^\ce$ is a Cartan subalgebra of $\g$ and $\te=\te_0\dotplus\ce$, where $\te_0:=\g^{\ce}\cap\g_0$.
Note that $\lg\ce,\te_0\rg=0$. As $\sigma(\te)=\te$, the involution $\sigma$ acts on the
root system $\Delta=\Delta(\g,\te)$ and hence on the standard $\BQ$-form $\te_\BQ\subset\te$. 
Then $\te_{\BQ}=(\te_0)_{\BQ}\oplus\ce_{\BQ}$, where $\ce_\BQ=\ce\cap\te_\BQ$. 
Since $\g^{\ce}=\te$, no root $\ap\in\Delta$ vanishes on $\ce$. Choose a generic $\xi\in\ce_{\BQ}$ 
such that $\ap(\xi)\ne 0$ for all $\ap\in\Delta$ and set  $\Delta^+:=\{\ap\in\Delta \mid \ap(\xi)>0\}$. Let 
$\be$ be the Borel subalgebra of $\g$ associated with $\Delta^+$ and $\ut=[\be,\be]$. 

\begin{lm}      \label{lm:b}
We have\/ $\be+\g_0=\g$ and\/ $\be\cap\g_0=\te_0$.
\end{lm}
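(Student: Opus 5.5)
We will compute both $\be\cap\g_0$ and a dimension count, using the action of $\sigma$ on root spaces. Since $\sigma(\te)=\te$ and $\sigma$ acts on $\ce$ by $-1$ and on $\te_0$ by $+1$, for every root $\ap\in\Delta$ the composition $\sigma(\ap):=\ap\circ\sigma$ is again a root and $\sigma(\g_\ap)=\g_{\sigma(\ap)}$. Our choice of $\xi\in\ce_\BQ$ gives $\sigma(\ap)(\xi)=\ap(\sigma\xi)=-\ap(\xi)$. Hence $\sigma(\Delta^+)=-\Delta^+$, i.e. $\sigma(\ut)=\ut_-$, where $\ut_-$ is the sum of the negative root spaces. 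This is the key observation; everything else is bookkeeping.

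First we prove $\be\cap\g_0=\te_0$. Take $x=t+u\in\be\cap\g_0$ with $t\in\te$ and $u\in\ut$. Then $\sigma(x)=\sigma(t)+\sigma(u)$ with $\sigma(t)\in\te$ and $\sigma(u)\in\ut_-$. Comparing with $x=t+u$ in the decomposition $\g=\ut_-\oplus\te\oplus\ut$ forces $\sigma(u)=0$, hence $u=0$, and $\sigma(t)=t$. So $t\in\te\cap\g_0=\te_0$. The reverse inclusion $\te_0\subset\be\cap\g_0$ is obvious.

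For $\be+\g_0=\g$ we count dimensions: $\dim(\be+\g_0)=\dim\be+\dim\g_0-\dim\te_0$. The map $\ut\to\g_0$, $u\mapsto u+\sigma(u)$, is injective, because $\sigma(u)\in\ut_-$. Likewise $\ut\to\g_1$, $u\mapsto u-\sigma(u)$, is injective. Since $\g=\ut\oplus\te\oplus\sigma(\ut)$, with $\te=\te_0\oplus\ce$ and $\te_0\subset\g_0$, $\ce\subset\g_1$, we get $\g_0=\{u+\sigma u\}\oplus\te_0$ and $\g_1=\{u-\sigma u\}\oplus\ce$. Thus $\dim\g_0=\dim\ut+\dim\te_0$, and therefore
\[
\dim(\be+\g_0)=\dim\be+\dim\ut=\dim\g .
\]
Alternatively, one can argue directly: $\ut_-=\sigma(\ut)$, so $v=\sigma(u)\in\ut_-$ satisfies $v=(u+\sigma u)-u\in\g_0+\be$, and $\te\subset\be$.

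The only point needing care is the claim that $\sigma(\ap)(\xi)=-\ap(\xi)$. This uses $\xi\in\ce$ with $\sigma|_\ce=-\mathrm{id}$, together with the fact that the genericity of $\xi$ makes $\Delta^+$ a genuine positive system. Both are immediate from the setup, so we expect no real obstacle.
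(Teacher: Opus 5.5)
Your proof is correct and follows the paper's argument: both rest on $\sigma(\Delta^+)=-\Delta^+$, the intersection $\be\cap\g_0=\te_0$ is obtained the same way, and your ``alternative'' direct argument $\sigma(u)=(u+\sigma u)-u\in\g_0+\be$ is exactly the paper's proof of $\be+\g_0=\g$. Your main route, the dimension count, is a harmless variant. It also gives $\g_0=\{u+\sigma u\}\oplus\te_0$, i.e.\ $\dim\g_1=\dim\be-\dim\te_0$, which the paper later uses in Proposition~\ref{prop:all-ggs}.
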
 
\begin{proof}
Since $\sigma(\xi)=-\xi$, we have $\sigma(\ap)\in\Delta^-$ for each $\ap\in\Delta^+$. Then 
$\be\cap\g_0\subset \te\cap\g_0$ and hence $\be\cap\g_0=\te_0$. For each $x\in\ut_-=[\be_-,\be_-]$, 
we have  $\sigma(x)\in\ut$ and $x+\sigma(x)\in\g_0$. Thus, $x=(x+\sigma(x)) - \sigma(x)\in \g_0+\be$. 
\end{proof}

Let $\Pi=\{\ap_1,\dots,\ap_\ell\}$ be the set of simple roots in $\Delta^+$. Since 
$\sigma(\Delta^+)=-\Delta^+$, we have $\sigma(\Pi)=-\Pi$. This allows us to describe the {\it Satake 
diagram\/} of $\sigma$, $\mathsf{St}(\sigma)$, for the $S$-regular involutions. Here $\mathsf{St}(\sigma)$ 
is just the Dynkin diagram of $\g$ equipped with extra arrows connecting certain pairs of nodes. Namely,
if $\sigma(\ap_i)=-\ap_j$ and $i\ne j$, then we draw an arrow connecting $\ap_i$ and $\ap_j$. If
$\sigma(\ap_i)=-\ap_i$, then no arrow is attached to $\ap_i$. It follows that the arrow connecting $\ap_i$ and $\ap_j$ gives rise to the element $\ap_i-\ap_j\in\te_0\subset \te\simeq\te^*$ and $\dim\te_0$ equals
the number of arrows in $\mathsf{St}(\sigma)$.

\begin{rmk}
In the setting of real semisimple Lie algebras, a general description of Satake diagrams can be found in
\cite[Ch.\,4, \S\,4.3]{t41}. Then a bijection between the real forms and involutions of 
$\g$~\cite[Ch.\,4, \S\,1.3]{t41} allows to attach the Satake diagram to any involution. A straightforward 
construction of $\mathsf{St}(\sigma)$ via $\sigma$ is given in~\cite[1.1-1.4]{DeC-P} (although the term
``Satake diagram'' is not used there).
\end{rmk}
Set $\h=\ce\oplus\ut$. Then $\h\cap\g_0=\{0\}$ and $\g=\h\oplus\g_0$ is a non-degenerate 
semi-horospherical splitting of $\g$. If $\sigma=\vartheta^{\sf max}$, then $\ce=\te$, $\h=\be$, and 
the polynomial ring $\gZ_{\lg\be,\g_0\rg}$ is described in~\cite[Sect.\,4]{bn}. Below, we consider the 
remaining $S$-regular involutions $\sigma$, i.e., those with $\te_0\ne \{0\}$. The list of related pairs 
$(\g,\g_0)$ is given in the introduction. It is convenient, however, to distinguish the parity of $n$ in the 
$\sln$-case.

\begin{table}[h]
\begin{tabular}{c|llllc|}
   & $\g$ & $\g_0$ & $\dim\ce$ & $\dim\te_0$ &  \\ \hline
1 & $\mathfrak{sl}_{2n}$ & $\sln\dotplus\sln\dotplus\bbk$ & $n$ & $n{-}1$ & $n\ge 2$ \\
2 & $\mathfrak{sl}_{2n+1}$ & $\sln\dotplus\mathfrak{sl}_{n+1}\dotplus\bbk$ & $n$ & $n$ & $n\ge 1$
 \\
3 & $\mathfrak{so}_{2n}$ & $\mathfrak{so}_{n+1}\dotplus \mathfrak{so}_{n-1}$ & $n{-}1$ & $1$ & $n\ge 4$  \\
4 & $\eus E_6$ & $\mathfrak{sl}_6\dotplus\tri$ & $4$ & $2$ & \\ \hline
\end{tabular} 
\vskip1ex
\caption{The $S$-regular involutions of simple $\g$ that are not of maximal rank}  \label{tableau}
\end{table}
\noindent
We wish to apply Theorem~\ref{thm:sum} to semi-horospherical splittings $\g=\h\oplus\g_0$ 
related to $\sigma$ in Table~\ref{tableau}. Here $\g_{(0)}=\h\ltimes\g_0^{\sf ab}$ and 
$\g_{(\infty)}=\g_0\ltimes\h^{\sf ab}$. Our hope is to prove that $s_0+s_\infty=\ell$ \ and both 
$\cz_0=\cz\gS(\g_{(0)})$ and $\cz_\infty=\cz\gS(\g_{(\infty)})$ are polynomial rings.

\textbullet \ \ The first part of this program is
\begin{lm}      \label{lm:5-items}
For all semi-horospherical splittings related to $\sigma$ of 
Table~\ref{tableau}, we have $s_0=\dim\te_0$, $s_\infty=\dim\ce$, and hence $s_0+s_\infty=\ell$.
\end{lm}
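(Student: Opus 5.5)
First I compute $s_0$. Here the first summand is $\h=\ce\oplus\ut$, a solvable horospherical subalgebra with $\te_1=\ce$. Proposition~\ref{prop-s} is stated for an arbitrary $\te_1\subset\te$, so it applies directly and gives $s_0=\ell-\dim\ce=\dim\te_0$, because $\te=\te_0\oplus\ce$. The only thing to check is that Lemma~\ref{lm:dense} and Lemma~\ref{lm:t} use nothing beyond $\ut\subset\h\subset\be$ and the non-degeneracy of $\lg\ ,\,\rg$ on $\te_1=\ce$. Both hold, and $\ce^\perp\cap\te=\te_0$ since $\lg\ce,\te_0\rg=0$.

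Next I compute $s_\infty=\trdeg\bbk((\g/\g_0)^*)^{G_0}$. Identify $(\g/\g_0)^*\simeq\Ann(\g_0)$ via $(\ ,\,)$ with $\g_0^\perp=\g_1$, as $G_0$-modules. By Kostant--Rallis~\cite{kr71}, $\bbk[\g_1]^{G_0}$ is a polynomial ring in $\dim\ce$ variables, and generic $G_0$-orbits in $\g_1$ are closed and separated by invariants. Hence $\trdeg\bbk(\g_1)^{G_0}=\dim\ce$. One could argue directly instead: the generic stabiliser $\g_0^x$ for $x\in\ce$ regular is $\g_0\cap\te=\te_0$. Rosenlicht then gives $s_\infty=\dim\g_1-\dim\g_0+\dim\te_0$, and $\dim\g_1-\dim\g_0=\dim\ce-\dim\te_0$ because $\g_1$ contains a regular semisimple element and $\dim\g_0-\dim\g_1=\dim\te_0-\dim\ce$ (compare dimensions of $\g^x$ split into $\sigma$-eigenspaces, $x\in\ce$ regular). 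Either way, $s_\infty=\dim\ce$.

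Summing gives $s_0+s_\infty=\dim\te_0+\dim\ce=\dim\te=\ell$. As a consistency check, Theorem~\ref{sum-r}{\sf (ii)} then predicts $s_0=\dim\g_0-\dim\ut$ and $s_\infty=\dim\h-\dim\ut$. The second is clear, since $\dim\h-\dim\ut=\dim\ce$. The first follows from Lemma~\ref{lm:b}: $\dim\g_0=\dim\g-\dim\be+\dim\te_0=\dim\ut+\dim\te_0$.

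I expect the main obstacle to be the application of Proposition~\ref{prop-s}, specifically Lemma~\ref{lm:t}. It requires a generic $x\in\ce^\perp\cap\te=\te_0$, and the choice of $\Delta^+$ was made via $\xi\in\ce$, not via $\te_0$. So one must confirm that the Levi subalgebra $\g^x$ for generic $x\in\te_0$ still inherits a triangular decomposition from the chosen $\be$; it does, because any Levi subalgebra containing $\te$ does. One must also confirm that a regular nilpotent element of $[\g^x,\g^x]$ lies in $\ut$. Both facts are general and independent of the table, so no case-by-case check over Table~\ref{tableau} should be needed.
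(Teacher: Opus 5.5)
Your proposal is correct and follows the paper's proof: $s_0=\ell-\dim\ce=\dim\te_0$ comes from Proposition~\ref{prop-s} with $\te_1=\ce$, and $s_\infty=\dim\ce$ comes from $(\g/\g_0)^*\simeq\g_1$ together with the Kostant--Rallis fact that $\bbk[\g_1]^{G_0}$ is a polynomial ring in $\dim\ce$ variables with fraction field $\bbk(\g_1)^{G_0}$. Your Rosenlicht-based alternative for $s_\infty$ and the consistency check through Lemma~\ref{lm:b} are valid but not needed.
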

\begin{proof} 
1. Since $\g_1$ is an orthogonal $G_0$-module, we have $(\g/\g_0)^*\simeq \g_1$ and 
$\bbk(\g_1)^{G_0}$ is the fraction field of $\bbk[\g_1]^{G_0}$. A basic property of the isotropy representation $G_0\to SO(\g_1)$  is that $\bbk[\g_1]^{G_0}$ is a polynomial
ring in $\dim\ce$ variables~\cite{kr71}. Hence $s_\infty=\dim\ce$.

2. Since $\h$ is solvable and horospherical, Proposition~\ref{prop-s} shows that $s_0=\ell-\dim\ce$.
\end{proof}

\textbullet \ \ In~\cite[Sect.\,5]{coadj}, it is shown that $\cz_\infty$ is a polynomial ring for all items in 
Table~\ref{tableau}. 

\textbullet \ \ By Theorem~\ref{thm:inv-h}, if $\h$ admits a \ggs, then $\cz_0$ is a polynomial ring. Hence 
our task is to realise whether there is a \ggs\ for $\h=\ce\oplus\ut$ associated with Items 1--4 in 
Table~\ref{tableau}. Here we can forget about $\g_0$ and deal with the complementary to $\h$ 
horospherical subalgebra $\h_-=\te_0\oplus\ut_-$, where $\te=\te_0\oplus\ce$.

As in Proposition~\ref{restate1}, we regard the elements of $\gS(\g)^\g$ as polynomial functions on 
$\g^*\simeq\g$. Recall that the condition that $F^\bullet\in \gS(\te_0)$ is being transformed into the 
property that $F\vert_{\te_0}\ne 0$. This means that, for a fixed Cartan 
subalgebra $\te\subset\g$, one has to know the embedding $\te_0\subset\te$ and then describe the 
restriction of $\bbk[\te]^W$ to $\te_0$, see Proposition~\ref{restate2}. As explained above, the explicit embedding $\te_0\subset\te$ is 
obtained from $\mathsf{St}(\sigma)$.

\subsection{$\g=\mathfrak{sl}_{2n}$}
\label{subs:gl1}
For $\g_0=\sln\dotplus\sln\dotplus\bbk$, the Satake diagram is presented in Figure~\ref{S_A1}. We assume that $n\ge 2$, so that the diagram does have arrows and $\te_0\ne \{0\}$.

\begin{figure}[htb]
\begin{picture}(115,38)(20,5)
\setlength{\unitlength}{0.024in}
\multiput(10,8)(90,0){2}{\circle{5}}
\multiput(40,8)(15,0){3}{\circle{5}}
\put(8.65,6.5){\tiny 1}  \put(53.65,6.6){\tiny $n$}
\multiput(43,8)(15,0){2}{\line(1,0){9}}
\multiput(13,8)(60,0){2}{\line(1,0){4}}
\multiput(33,8)(60,0){2}{\line(1,0){4}}
\multiput(19.5,5)(60,0){2}{$\cdots$}   
{\color{blue}
\multiput(19.5,13)(60,0){2}{$\cdots$} 
\put(55,14){\oval(90,25)[t]}
\put(55,14){\oval(30,18)[t] }
\multiput(10,14)(90,0){2}{\vector(0,-1){3}}
\multiput(40,14)(30,0){2}{\vector(0,-1){3}}
}
\put(66,3){$\underbrace{\mbox{\hspace{38\unitlength}}}_{n-1}$}
\put(6,3){$\underbrace{\mbox{\hspace{38\unitlength}}}_{n-1}$}
\put(110,12.8){\vector(-2,-1){10}}  \put(111,12.8){\tiny $2n-1$}
\end{picture}
\caption{The Satake diagram for $(\mathfrak{sl}_{2n}, \sln\dotplus\sln\dotplus\bbk)$} 
\label{S_A1}
\end{figure}

\noindent
The arrows connect the pair of nodes $(1,2n-1),(2,2n-2), \dots, (n-1,n+1)$. 
Therefore $\te_0$ is the $\bbk$-linear span of $\ap_i-\ap_{2n-i}$ with $1\le i\le n-1$. Hence
\beq          \label{eq:c1}
   \te_0=\{{\sf diag}(c_1,\dots,c_n,c_n,\dots, c_1)\mid \sum_{i=1}^nc_i=0\} .
\eeq
Let ${\tt P}_2,\dots,{\tt P}_{2n}$ be a Hilbert basis of $\bbk[\mathfrak{sl}_{2n}]^{SL_{2n}}$ 
such that ${\tt P}_k(A)=\tr(A^k)$ for $A\in\g\simeq\g^*$. Then ${\bt}_k:={\tt P}_k\vert_{\te}$ is a power 
sum, i.e., $\bt_k(A)$ is the sum of the $k$-th powers of the diagonal entries of $A$.  Here
$W=\BS_{2n}$, $N_W(\te_0)/Z_W(\te_0)\simeq \BS_n$, and
$\BS_n$ acts on $\te_0$ by permuting the entries $\{c_i\}$. It follows from~\eqref{eq:c1} that 
${\tt P}_k|_{\te_0}\in \bbk[\te_0]^{\BS_n}$ for every $k$ and the polynomials ${\tt P}_k|_{\te_0}$ with 
$2\le k\le n$ generate the ring $\bbk[\te_0]^{\BS_n}$. Therefore, if $k>n$, then there is a polynomial 
$\mathcal F$ in $n-1$ variables such that $\widetilde{\tt P}_k=\ttP_k-\mathcal F(\ttP_2,\ldots,\ttP_n)$ 
vanishes on $\te_0$.  By Theorem~\ref{thm:ggs-h}, this means that 
${\tt P}_2,\dots,{\tt P}_n,\widetilde{\tt P}_{n+1},\dots,\widetilde{\tt P}_{2n}$ is a \ggs\ for $\h$. 

Thus, Theorem~\ref{thm:sum} applies here and hence $\cz_{\lg\h,\g_0\rg}$ is a polynomial ring.

\subsection{$\g=\mathfrak{sl}_{2n+1}$}
\label{subs:gl2}
For $\g_0=\sln\dotplus\slno\dotplus\bbk$, the Satake diagram is

\begin{figure}[h]
\begin{picture}(120,34)(10,2)
\setlength{\unitlength}{0.020in}
\multiput(10,8)(75,0){2}{\circle{5}} \multiput(40,8)(15,0){2}{\circle{5}}
\put(43,8){\line(1,0){9}}
\multiput(13,8)(45,0){2}{\line(1,0){4}} \multiput(33,8)(45,0){2}{\line(1,0){4}}
\multiput(19.5,5)(45,0){2}{$\cdots$}   
{\color{blue}
\multiput(19.5,13)(45,0){2}{$\cdots$} 
\put(47.5,14){\oval(75,25)[t]} \put(47.5,14){\oval(15,18)[t] }
\multiput(10,14)(75,0){2}{\vector(0,-1){3}}
\multiput(40,14)(15,0){2}{\vector(0,-1){3}}
}
\put(51,3){$\underbrace{\mbox{\hspace{38\unitlength}}}_{n}$}
\put(6,3){$\underbrace{\mbox{\hspace{38\unitlength}}}_{n}$}
\end{picture}.
\end{figure}
\noindent
The arrows connect the pair of nodes $(1,2n),(2,2n-1), \dots, (n,n+1)$. Therefore here
\beq          \label{eq:c2}
   \te_0=\{{\sf diag}(c_n,\dots,c_1, c_0 ,c_1,\dots, c_n)\mid c_0+2\sum_{i=1}^n c_i=0\} .
\eeq
First, consider the case with $n=1$. Here $\te_0=\{\diag(c,-2c,c)\}\subset\mathfrak{sl}_3$ and the
$SL_3$-invariants of degree 2 and 3 are uniquely determined (up to a scalar multiple). 
For $\xi\in\te_0$, we have
\[
          \ttP_2(\xi)=6c^2, \quad {\tt P}_3(\xi)=-6c^3.
\]
Therefore, one always has $a=2> 1=\dim\te_0$ (in the notation of 
Theorem~\ref{thm:ggs-h}). Hence there is no \ggs\ for $\h\subset\mathfrak{sl}_3$.
Below, we give a general argument based on results of Section~\ref{subs:necessary}.

Using Eq.~\eqref{eq:c2}, we consider $\te_0$ as linear space of dimension $n$ with coordinates 
$c_1,\dots,c_n$. Here $W=\BS_{2n+1}$ and it readily follows 
from~\eqref{eq:c2} that $N_W(\te_0)/Z_W(\te_0)\simeq \BS_{n}$. The symmetric group $W_0=\BS_n$ 
acts on $\te_0$ by permuting the entries $c_1,\dots,c_n$, i.e., $\te_0$ is the standard permutation $\BS_n$-module.
In particular, $\bbk[\te_0]^{\BS_n}$ contains an element of degree~$1$.
Therefore, the restriction homomorphism $p: \bbk[\te]^W \to \bbk[\te_0]^{W_0}$ is not onto.
Using Corollary~\ref{cor:suffice}, we obtain that $\h$ does not admit a \ggs

\subsection{$\g=\sone$ and $\g_0=\gt{so}_{n+1}\oplus\gt{so}_{n-1}$}  
\label{subs:Dn} 
If $n=2$, then $\g\simeq\tri\dotplus\tri$ and $\sigma$ permutes these summands. This is a special case 
of symmetric pairs considered in~\cite[Sect.\,7]{bn}.  If $n=3$, then $\g\simeq\mathfrak{sl}_4$ and 
$\g_0\simeq \tri\dotplus\tri\dotplus\bbk$, which is a particular case of $\sigma$ considered in
Section~\ref{subs:gl1}. Therefore, we may assume that $n\ge 4$. Then
$\mathsf{St}(\sigma)$ is depicted in Figure~\ref{S_D}. Here $\dim\ce=n{-}1$ and $\dim\te_0=1$. 

\begin{figure}[h]
\begin{picture}(100,22)(50,5)
\setlength{\unitlength}{0.0185in} %
\put(30,8){\circle{6}}     
\multiput(50,8)(40,0){2}{\circle{6}} 
\put(28.6,6.35){\tiny 1} \put(48.6,6.35){\tiny 2}
\put(33,8){\line(1,0){14}}
\multiput(110,-2)(0,20){2}{\circle{6}}
\put(108.40,-3.35){\tiny $n$}
\put(93,10){\line(2,1){13}}
\put(93,6){\line(2,-1){13}}
\multiput(53,8)(28,0){2}{\line(1,0){6}}   
\put(64,5){$\cdots$}
{\color{blue}\put(116,8){\oval(20,20)[r]}}
{\color{blue}\put(113,18){\vector(-1,0){2}}}
{\color{blue}\put(111,-2){\vector(-1,0){2}}}
\end{picture}
\caption{The Satake diagram for $(\sone, \mathfrak{so}_{n+1}\dotplus\mathfrak{so}_{n-1})$} 
\label{S_D}
\end{figure}
\noindent
The only arrow  connects the nodes $n{-}1$ and $n$, i.e., $\te_0=\bbk(\ap_{n-1}-\ap_n)$. For the usual 
choice of simple roots with $\ap_i=\esi_i-\esi_{i+1}$ ($i<n$) and $\ap_n=\esi_{n-1}+\esi_n$,
$\te_0$ is the line generated by $\esi_n$.
Suppose that $\sone$ is the set of the skew-symmetric matrices with respect to the antidiagonal. 
Then $\ut$ (resp. $\te$) is the set of such strictly upper-triangular (resp. diagonal)  matrices and \\
\centerline{
$\te_0=\{{\sf diag}(0,\dots,0,\esi_n,-\esi_n,0,\dots,0)\}$. }

\noindent
For $A\in\sone$ and $k\le n$, let $\Delta_{2k}(A)$ be the sum of all principal $2k$-minors of $A$. 
Consider the Hilbert basis $F_1,\dots,F_n$ of $\bbk[\sone]^{SO_{2n}}$ such that 
$F_k(A)=\Delta_{2k}(A)$ 
for $k<n$ and $F_n(A)={\sf Pf}(A)$, the {\it Pfaffian\/} of $A$. 
Clearly, $\Delta_{2k}\vert_{\te_0}\ne 0$ if and only if $k=1$. Since ${\sf Pf}(A)^2=\Delta_{2n}(A)$, we
see that $\Delta_{2}\vert_{\te_0}$ is the only nonzero restriction and hence
$\Delta_2,\ldots,\Delta_{2n-2}, {\sf Pf}$ \ is a {\sf g.g.s.} for $\h$.  

Thus, Theorem~\ref{thm:sum} applies here and hence $\cz_{\lg\h,\g_0\rg}$ is a polynomial ring. Note also that here $W_0\simeq\BS_2$ and the restriction $\bbk[\te]^W\to \bbk[\te_0]^{W_0}$ is onto.

\subsection{$\g=\eus E_6$}
\label{subs:E6}

Here $\g_0=\gt{sl}_6\dotplus\gt{sl}_2$ and the Satake diagram of $\sigma$ is depicted in Figure~\ref{S_E}. 
Therefore $\dim\te_0=2$ and $\dim\ce=4$. 

\begin{figure}[h]
\begin{picture}(90,50)(40,10)
\setlength{\unitlength}{0.024in} 
\multiput(23,18)(15,0){4}{\line(1,0){9}}
\put(50,3){\circle{5}}    
\put(18.6,16.5){\tiny 1} \put(33.6,16.5){\tiny 2}  \put(48.6,16.5){\tiny 3} \put(63.6,16.5){\tiny 4}
\put(78.6,16.5){\tiny 5}   \put(48.6,1.5){\tiny 6}
\multiput(20,18)(15,0){5}{\circle{5}}
\put(50,6){\line(0,1){9}}
{\color{blue}\put(50,23){\oval(60,20)[t]}
\put(50,21){\oval(31,18)[t]}
\multiput(34.6,23)(30.8,0){2}{\vector(0,-1){2}}
\multiput(20,23)(60,0){2}{\vector(0,-1){2}}}
\end{picture}
\caption{The Satake diagram for $(\eus E_6, \gt{sl}_6 \dotplus \gt{sl}_2 )$} 
\label{S_E}
\end{figure}

\noindent
The two arrows indicate that $\sigma(\ap_1)=-\ap_5$ and $\sigma(\ap_2)=-\ap_4$,
i.e., $\te_0=\lg \ap_1-\ap_5, \ap_2-\ap_4\rg$.
As in Section~\ref{subs:gl2}, we use Richardson's result to prove that there is no \ggs\ for $\h$. To these end, we have to explicitly determine $W_0=N_W(\te_0)/Z_W(\te_0)$, where $W=W(\eus E_6)$.

\begin{lm}   \label{lm:E6}
For $\te_0=\lg \ap_1-\ap_5, \ap_2-\ap_4\rg\subset\te=\te(\eus E_6)$, we have $W_0\simeq\BS_3$ and
$W_0\to GL(\te_0)$ is the standard reflection representation of\/ $\BS_3$.  
\end{lm}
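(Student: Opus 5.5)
Throughout, identify $\te\simeq\te^*$ via $\lg\ ,\,\rg$, so that $\te_0$ is spanned by $v_1:=\ap_1-\ap_5$ and $v_2:=\ap_2-\ap_4$. The approach is to exhibit three roots of $\eus E_6$ lying in $\te_0$ which form an $\eus A_2$ root subsystem. Their reflections will then generate a copy of $\BS_3$ inside $N_W(\te_0)$ acting on $\te_0$ as the reflection representation. Afterwards, we bound $W_0$ from above to show it is no bigger.

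\textbf{Step 1: roots in $\te_0$.} The vectors $v_1,v_2$ are not roots themselves (e.g. $\ap_1,\ap_5$ are orthogonal, so $(v_1,v_1)=4$). Instead consider combinations like $\ap_1+\ap_2-\ap_4-\ap_5$ or $v_1-v_2$. The cleaner route is to use the description of $\te_0$ as the $(+1)$-eigenspace of $\sigma$ on $\te$. Since $\sigma(\ap_i)=-\ap_{\pi(i)}$ with $\pi=(15)(24)$ and $\pi(3)=3,\ \pi(6)=6$, we get $\sigma=-w_0$ up to the diagram automorphism. Concretely $\sigma=-\pi$, so $\te_0=\{x\mid \pi x=-x\}$. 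Roots $\beta$ with $\pi\beta=-\beta$ and $\pi$ as the diagram automorphism have the form $\beta=\sum c_i\ap_i$ with $c_i=-c_{\pi(i)}$ and $c_3=c_6=0$; such $\beta$ would have coefficients of mixed sign, impossible. So no roots lie in $\te_0$, and $\BS_3$ must come from elsewhere.

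\textbf{Step 1 (revised).} Use $W^\sigma$-type elements instead. The pointwise stabiliser $Z_W(\te_0)$ is the Weyl group of the roots orthogonal to $\te_0$, i.e. those with $\pi$-invariant coefficients. This gives a parabolic-type subsystem containing $\ap_3,\ap_6$, $\ap_2+\ap_4$-type roots, etc. The norm of $\te_0$ should be computed as $N_W(W_{\te_0^\perp})$ modulo $W_{\te_0^\perp}$, via the Howlett-type formula. Practically, I would first identify the root subsystem $\Phi_0=\Delta\cap\te_0^\perp$ (expected type $\eus D_4$, since its rank is $4$ and it contains $\ap_3,\ap_6,\ap_2+\ap_3+\ap_4,\dots$). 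Then $W_0=N_W(\Phi_0)/W(\Phi_0)$ acting on $(\te_0^\perp)^\perp=\te_0$. For $\eus D_4\subset\eus E_6$ it is classical that this quotient is $\BS_3$, realising the triality automorphisms of $\eus D_4$ and acting faithfully on the $2$-dimensional complement.

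\textbf{Step 2: identify the action.} A faithful $\BS_3$-action on a $2$-dimensional space, preserving the form $\lg\ ,\,\rg$ and without fixed vectors, is the reflection representation. Absence of fixed vectors follows because a nonzero fixed vector would make $\te_0$ not generic-centralised. Alternatively, compute explicitly: find $w\in W$ of order $3$ permuting cyclically three vectors $u_1,u_2,u_3\in\te_0$ with $u_1+u_2+u_3=0$ (e.g. built from $v_1,v_2$ and the fundamental-weight-like combinations), plus an involution (say $-w_0$ composed with $\pi$, i.e. $w_0$ itself restricted) swapping two of them.

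\textbf{Main obstacle.} The main obstacle is the upper bound $|W_0|\le 6$ together with correct identification of $\Phi_0$. I would verify $\Phi_0\simeq\eus D_4$ by listing the $\pi$-symmetric positive roots of $\eus E_6$ (there should be $12$). I would then use that $N_W(W(\eus D_4))/W(\eus D_4)$ embeds in $\mathrm{Aut}(\eus D_4\text{ diagram})\simeq\BS_3$, which bounds $W_0$.
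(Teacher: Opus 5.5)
\textbf{Assessment.} Your reduction and upper bound are sound and differ from the paper's route, but the lower bound $|W_0|\ge 6$ is not established, and that is where the content of the lemma lies.

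\textbf{What works.} Let $\pi=(15)(24)$ be the diagram involution. By Steinberg's theorem, $Z_W(\te_0)$ is generated by the reflections in roots orthogonal to $\te_0$, i.e.\ in the $\pi$-symmetric roots. These form $\Phi_0\simeq\eus D_4$, which spans $\ce=\te_0^\perp$. Hence $N_W(\te_0)=N_W(\Phi_0)$ and $W_0=N_W(\Phi_0)/W(\Phi_0)$.

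The map $W_0\to\mathrm{Aut}(\Phi_0)/W(\Phi_0)\simeq\BS_3$ is injective. An element acting on $\ce$ like some $u\in W(\Phi_0)$ differs from $u$ by an element of $Z_W(\ce)$. You should say explicitly that $Z_W(\ce)=\{1\}$, which is exactly your Step~1 observation that no root lies in $\te_0$. This upper bound avoids symmetric-space theory; the paper instead uses the little Weyl group to get $N_W(\te_0)=N_W(\ce)\simeq W(\eus F_4)$, together with $W(\eus F_4)/W(\eus D_4)\simeq\BS_3$.

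In Step~2 no fixed-vector argument is needed. $W_0$ acts faithfully on $\te_0$ by definition. Every faithful $2$-dimensional representation of $\BS_3$ is the reflection representation, because all one-dimensional characters of $\BS_3$ are trivial on $3$-cycles.

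\textbf{The gap.} Your last paragraph has the difficulty backwards: the upper bound is the easy direction. It is not automatic that all of $\mathrm{Out}(\eus D_4)$ is realised inside $W$; for $\eus D_4\subset\eus D_5$ only a subgroup of order $2$ is. Calling it ``classical'' just restates the claim. Your one concrete suggestion also fails. You noted that $w_0$ acts on $\te$ as $-\pi=\sigma|_{\te}$, and $\te_0$ is the $(+1)$-eigenspace of $\sigma$. So $w_0\in Z_W(\te_0)$ acts trivially on $\te_0$ and swaps nothing. No order-$3$ element is exhibited either.

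\textbf{The missing step} is the paper's computation. The elements $\bar r_1=r_{\ap_1}r_{\ap_5}$ and $\bar r_2=r_{\ap_2}r_{\ap_4}$ commute with $\pi$, so they preserve $\te_0$. With $v_1=\ap_1-\ap_5$ and $v_2=\ap_2-\ap_4$ one finds $\bar r_1(v_1)=-v_1$, $\bar r_1(v_2)=v_1+v_2$, $\bar r_2(v_1)=v_1+v_2$, and $\bar r_2(v_2)=-v_2$. Since $(v_1,v_1)=(v_2,v_2)=4$ and $(v_1,v_2)=-2$, these are the reflections of the rescaled $\eus A_2$-system $\{\pm v_1,\pm v_2,\pm(v_1+v_2)\}$. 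This is the $\eus A_2$ you were looking for in Step~1, realised by products of two commuting reflections rather than by roots of $\eus E_6$. They generate a copy of $\BS_3$ in $W_0$ acting by the reflection representation. Combined with your injectivity argument, this proves the lemma.
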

\begin{proof}
We have $\te=\te_0\oplus\ce$ and $\te_0=\ce^\perp$. Hence $N_W(\ce)=N_W(\te_0)$. Since 
$\ce\subset\g_1$ is a Cartan subspace for $\sigma$, $W_\ce:=N_W(\ce)/Z_W(\ce)$ is the {\it little Weyl 
group\/} for the symmetric variety $\eus E_6/(\eus A_5\times\eus A_1)$. Therefore, 
$W_\ce\simeq W(\eus F_4)$. As $\ce$ contains regular semisimple elements of $\te$, $Z_W(\ce)=\{1\}$ 
and hence $N_W(\ce)=N_W(\te_0)\simeq W(\eus F_4)$.  To determine the (non-trivial) group $Z_W(\te_0)$, we use the restricted root system $\Delta\vert_\ce$, which is isomorphic to 
$\Delta(\eus F_4)$.

A root $\gamma=\sum_{i=1}^6 n_i\ap_i\in \Delta(\eus E_6)$ is said to be {\it symmetric}, if
$n_1=n_5$ and $n_2=n_4$, see Fig.~\ref{S_E}. All other roots are said to be non-symmetric.
It is easily seen that $\gamma$ is symmetric if and only if $\gamma\in\ce$, and all symmetric roots form the root system of type $\eus D_4$. If $\gamma$ is symmetric, then $\bar\gamma=\gamma\vert_\ce$ is identified with $\gamma$, and these roots form the set of all {\bf long} roots in $\Delta(\eus F_4)$. If
$\gamma$ is non-symmetric, then $\bar\gamma=\frac{1}{2}(\gamma+\sigma(\gamma))$ is a {\bf short} 
root in $\Delta(\eus F_4)$.

For a symmetric $\gamma$, the reflection $r_\gamma\in W(\eus F_4)\subset W(\eus E_6)$ acts trivially on $\te_0=\ce^\perp$. Hence $W(\eus D_4)\subset Z_W(\te_0)$. Note that 
$W(\eus D_4)\triangleleft W(\eus F_4)$ and $W(\eus F_4)/W(\eus D_4)\simeq
W(\eus A_2)\simeq \BS_3$. 
The simple reflections in $W(\eus A_2)$ correspond to the short simple roots of $\Delta(\eus F_4)$.
These simple roots are $\bar{\ap_1}=\bar{\ap_5}$ and $\bar{\ap_2}=\bar{\ap_4}$, and the corresponding
reflections in $W(\eus F_4)$ are $\bar r_1=r_{\ap_1}r_{\ap_5}$ and $\bar r_2=r_{\ap_2}r_{\ap_4}$.
Computing the action of $\bar r_1$ and $\bar r_2$ on $\ap_1-\ap_5$ and $\ap_2-\ap_4$, we obtain the
reflection representation of $\BS_3$. Hence $W(\eus D_4)= Z_W(\te_0)$, and we are done.
\end{proof}

The algebra $\bbk[\te_0]^{W_0}$ contains an element of degree $3$, whereas there are no elements of 
degree $3$ in $\bbk[\te]^W$. Hence the restriction homomorphism $p: \bbk[\te]^W\to \bbk[\te_0]^{W_0}$ 
is not onto. By Corollary~\ref{cor:suffice}, we conclude that $\h$ does not admit a \ggs

{\it\bfseries Conclusion to Table}~\ref{tableau}. 
For items $1\&3$, we have proved that $\h$ admits a \ggs\ and $\gZ_{\lg\h,\g_0\rg}$ is a polynomial ring.
On the other hand, $\h$ does not admit a \ggs\ for items $2\&4$. Therefore, the structure of 
$\cz_0=\cz\gS(\h\ltimes\g_0^{\sf ab})$ is not known for them. Consequently, the structure of 
$\gZ_{\lg\h,\g_0\rg}$ is also hidden in fog.

\subsection{On good generating systems for $\g_0$}
\label{subs:ggs-review}
Let $\sigma$ be an involution of a simple Lie algebra $\g$ and $\g=\g_0\oplus\g_1$ the corresponding 
$\BZ_2$-grading. It is known that there are four cases such that $\g_0$ does not admit a 
\ggs~\cite[Remark\,4.3]{coadj}. (In these ``bad'' cases, $\g$ is of type~$\eus E_n$.) For all other 
involutions, there does exist a \ggs~\cite{coadj,contr}. It is noticed  in~\cite[Section\,6]{bn} that if 
$\sigma=\vartheta^{\sf max}$, then {\bf every} Hilbert basis of $\gS(\g)^\g$ is a \ggs{}  In the setting of 
splittings, this property is quite useful, if we need a common \ggs\ for $\g_0$ and the other 
summand.

For any simple $\g$, there is a unique (up to $G$-conjugation) involution of maximal rank, which can be 
either inner or outer. Furthermore, if $\vartheta^{\sf max}$ is outer, then
there is a unique {\sl inner\/} $S$-regular involution $\vartheta^{\sf int}$. This happens for the Lie
algebras of type $\eus A_n$ ($n\ge 2$), $\eus D_{2n+1}$  ($n\ge 2$), and $\eus E_6$.
The involutions $\vartheta^{\sf int}$ occur in Table~\ref{tableau} (where $n$ must be odd in no.\,3).

\begin{prop}    \label{prop:all-ggs}
If $\sigma=\vartheta^{\sf int}$, then any Hilbert basis of\/ $\gS(\g)^\g$ is a \ggs\ for $\g_0$.
\end{prop}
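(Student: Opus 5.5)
We use the criterion of Theorem~\ref{thm:kot14}{\sf (ii)} with $\h=\g_0$ and $\me=\g_1$: a Hilbert basis $F_1,\dots,F_\ell$ is a \ggs\ for $\g_0$ if and only if $\sum_j\deg_{\g_1}F_j=\dim\g_1$. Since $\sum_j\deg F_j=\dim\be=\dim\ut+\ell$, it suffices to show the two pointwise statements
\[
  \deg_{\g_1}F=\deg F \ \text{ for all homogeneous } F\in\gS(\g)^\g\setminus\{0\} \text{ with } F\vert_{\g_1}\ne 0,
\]
and to control the remaining generators. Regarding $F$ as a polynomial on $\g\simeq\g^*$ with $\g_1^*\simeq\g_1$ (orthogonal decomposition), the component of maximal $\g_1$-degree is $F\vert_{\g_1}$ whenever that restriction is nonzero. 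So the plan is: count how many generators restrict nontrivially to $\g_1$, and show the others have $\g_1$-degree exactly $\deg F_j-1$ (they cannot have degree $\deg F_j$, so this is the maximum possible), and that these numbers add up to $\dim\g_1$.

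Step 1. Restriction to $\g_1$: by Kostant--Rallis, $\bbk[\g_1]^{G_0}\simeq\bbk[\ce]^{W_\ce}$, and $F\vert_{\g_1}\ne0$ iff $F\vert_\ce\ne0$. For $\vartheta^{\sf int}$ we have $\te=\te_0\oplus\ce$ with $\te_0\ne0$, and $-1\notin W$ (types $\eus A_n$, $\eus D_{2n+1}$, $\eus E_6$). The key observation I would make: $\sigma\vert_\te$ equals $-w_0$ composed with something acting... more concretely, for the Borel $\be$ chosen in Lemma~\ref{lm:b}, $\sigma(\Delta^+)=-\Delta^+$, so $\sigma\vert_\te=-w_0\circ\tau$ where $\tau$ is a diagram automorphism; since $\sigma$ is inner, $\sigma\vert_\te\in W$, and indeed $\sigma\vert_\te=w_0$ (the unique element sending $\Delta^+$ to $-\Delta^+$). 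Hence $\ce$ is the $(-1)$-eigenspace of $w_0$, and $w_0=-\epsilon$ with $\epsilon$ the nontrivial diagram involution acting on $\bbk[\te]^W$. Choose any homogeneous $F$; since $-w_0$ acts on $F\vert_\te$ by $(-1)^{\deg F}$ times the action of $\epsilon$ which is trivial on $W$-invariants (diagram automorphisms preserve... ) — rather, $w_0\in W$ acts trivially, so $F(x)=F(w_0x)=F(-x)=(-1)^{\deg F}F(x)$ for $x\in\ce$. Thus every odd-degree invariant vanishes on $\ce$, for \emph{every} Hilbert basis.

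Step 2. Count: the number of odd degrees among $d_1,\dots,d_\ell$ equals $\dim\te_0$ (the multiplicity of eigenvalue $+1$ of $w_0=\sigma\vert_\te$, i.e. the number of exponents... known: eigenvalues of $w_0$ on $\te$ are $(-1)^{d_j}\cdot(-1)$), so $\dim\ce=\#\{j: d_j \text{ even}\}$. Even-degree generators: since $\bbk[\ce]^{W_\ce}$ is polynomial in $\dim\ce$ variables and the restriction $\bbk[\te]^W\to\bbk[\ce]^{W_\ce}$ is onto (Kostant--Rallis/Helgason), and odd generators restrict to $0$, the even generators' restrictions must generate, hence are nonzero. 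So $\deg_{\g_1}F_j=d_j$ for even $d_j$, independent of the basis.

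Step 3 (main obstacle): show $\deg_{\g_1}F_j=d_j-1$ for odd $d_j$, i.e. the component of bidegree $(1,d_j-1)$ is nonzero. Then $\sum\deg_{\g_1}F_j=\dim\be-\dim\te_0=\dim\ut+\dim\ce=\dim\g_1$ (using $\dim\g_1=\dim\ut+\dim\ce$ as $\ce$ regular), and we conclude by Theorem~\ref{thm:kot14}{\sf (ii)}. The $(1,d_j-1)$ component at $y\in\g_1$ is the linear form $x\mapsto(\textsl{d}_yF_j)(x)$, $x\in\g_0$, i.e. the $\g_0$-component of $\textsl d_yF_j\in\g^y$. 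Take $y\in\ce$ regular: $\g^y=\te$, and $\textsl d_yF_j\in\te$ is $w_0$-anti-invariant... for odd $d_j$, $\textsl d_yF_j$ is the gradient of an odd $w_0$-invariant function at a $w_0$-anti-fixed point, hence lies in the $(+1)$-eigenspace $\te_0$. By Kostant, $\{\textsl d_yF_j\}$ is a basis of $\te$, so the odd ones span $\te_0$ and in particular are nonzero. This gives the required nonvanishing, independent of the choice of Hilbert basis; I expect the careful justification that $\sigma\vert_\te=w_0$ and the eigenvalue count to be the points needing the most care.
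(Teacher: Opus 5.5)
Your proposal is correct, but it is organised differently from the paper's proof.

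\textbf{The paper's route.} The paper proves only an upper bound. Since $\sigma$ is inner, $F\circ\sigma=F$, so $F(x)=F(-x)$ on all of $\g_1$, and odd-degree generators vanish on $\g_1$. Hence $\sum_j\deg_{\g_1}F_j\le\sum_j d_j-\#\{j: d_j \text{ odd}\}=\dim\be-\dim\te_0=\dim\g_1$. Theorem~\ref{thm:kot14}{\sf (i)} supplies the reverse inequality for free, and {\sf (ii)} then concludes.

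\textbf{What your extra steps amount to.} Your Steps~2 and~3 establish the exact values $\deg_{\g_1}F_j=d_j$ for even $d_j$ and $\deg_{\g_1}F_j=d_j-1$ for odd $d_j$. Given part {\sf (i)}, these lower bounds are logically redundant. Likewise, the detour through $w_0$ and $\ce$ in Step~1, which then needs density of $G_0\ce$ in $\g_1$, can be replaced by the one-line observation $F(\sigma x)=F(x)$.

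\textbf{What your route buys.} You get explicit bi-degrees. More interestingly, you get an a priori proof of the one input both arguments need, namely $\#\{j: d_j\text{ odd}\}=\dim\te_0$. The paper only asserts this (``readily verified case-by-case''), and you import it from the eigenvalue theory of $w_0$. Your own Step~3 already proves it. For regular $y\in\ce$, Kostant's theorem says $\textsl{d}_yF_1,\dots,\textsl{d}_yF_\ell$ form a basis of $\te=\te_0\oplus\ce$. Your parity computation $w_0\,\textsl{d}_yF_j=(-1)^{d_j-1}\textsl{d}_yF_j$ puts the odd-degree ones in $\te_0$ and the even-degree ones in $\ce$, and comparing dimensions forces the count.

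\textbf{Simplifying Step~2.} The same observation removes your appeal to surjectivity of $\bbk[\te]^W\to\bbk[\ce]^{W_\ce}$. That surjectivity does hold for the four pairs at hand, but it is a nontrivial external input and fails for some involutions. For example, for $(\eus E_6,\eus F_4)$ the little Weyl group is $W(\eus A_2)$, which has a degree-$3$ invariant, while $\bbk[\g]^G$ has none. Instead argue directly: if an even-degree $F_j$ vanished on $\ce$, its gradient at $y$, which lies in $\ce$, would be zero, contradicting Kostant.

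\textbf{A small correction.} Drop the aside that a diagram automorphism acts trivially on $W$-invariants. This is false in type $\eus A_n$ ($n\ge 2$), where $-w_0$ changes the sign of $\sum t_i^3$. You do correctly replace it by $w_0$-invariance.
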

\begin{proof}
It follows from Lemma~\ref{lm:b} that $\dim\g_1=\dim\be-\dim\te_0$. Since $\sigma$ is inner, $\sigma(F)=F$ for all $F\in\gS(\g)^\g\simeq \bbk[\g]^G$. For any $x\in\g_1$, we have
\[
   F(x)=(\sigma F)(x)=F(\sigma x)=F(-x)=(-1)^{\deg F}F(x) .
\]
Therefore $F\vert_{\g_1}\equiv 0$ whenever $\deg F$ is odd. Take any Hilbert basis of 
$\bbk[\g]^G$. If $\sigma$ is inner in Table~\ref{tableau}, i.e., $\sigma=\vartheta^{\sf int}$,
then $\# \{j\mid \deg F_j \text{ is odd}\}=\dim\te_0$. (Although this is readily verified case-by-case,
an a priori argument is also available.) Also, since $(\g_0,\g_1)=0$, the condition that 
$F_j\vert_{\g_1}\equiv 0$ is equivalent to that $F_j^\bullet\not\in \gS(\g_1)$. Hence
\[
      \sum_{j=1}^\ell \deg_{\g_1}\!( F^\bullet_j) \le \sum_{j=1}^\ell \deg F_j-\dim\te_0=\dim\g_1.
\]
By Theorem~\ref{thm:kot14}, there must be the equality here and, therefore, $F_1,\dots,F_\ell$ is a 
\ggs\ for $\g_0$.
\end{proof}
\begin{rmk}
For $(\mathfrak{so}_{2n}, \mathfrak{so}_{n+1}\dotplus\mathfrak{so}_{n-1})$ with $n$ even, $\sigma$ is 
outer and Proposition~\ref{prop:all-ggs} is not true. Here all $F_j$ have even degrees, and any Hilbert 
basis contains two elements of degree $n$. The corresponding $2$-dimensional space is
$\lg \Delta_n, \mathsf{Pf}\rg$ (see notation in Section~\ref{subs:Dn}).  Any basis for this space can be 
taken as part of a Hilbert basis of $\bbk[\mathfrak{so}_{2n}]^{SO_{2n}}$. But to obtain a \ggs\ for $\g_0$, 
one must include the Pfaffian $\mathsf{Pf}$ into a Hilbert basis. It is known that if $n$ is even (i.e., 
$\sigma$ is outer), then $\sigma(\mathsf{Pf})=-\mathsf{Pf}$, while $\sigma(\Delta_{2i})=\Delta_{2i}$ for 
$i\le n-1$. This shows that $\mathsf{Pf}\vert_{\g_1}\equiv 0$ regardless of the parity of $n$.
\end{rmk}

\section{Applications and complements}
\label{sect:related}
Here we show that our methods allow to obtain quickly some old results related to 
the Adler--Kostant--Symes (AKS) theory and point out directions for further research.
\subsection{Relations with AKS theory}    
\label{subs:aks}
Let $\q=\ah_1\oplus\ah_2$ be an  arbitrary splitting. Then $\q^*=\ah_1^*\oplus\ah_2^*$, where 
$\ah_1^*=\ah_2^\perp$ and $\ah_2^*=\ah_1^\perp$. Recall that the elements of $\gS(\q)$ are polynomial functions on $\q^*$ and if $\cF\in \gS^d(\q)$, then $\cF=\sum_{i=0}^d\cF_{i,d-i}$, where 
$\cF_{i,d-i}\in \gS^i(\ah_1)\otimes\gS^{d-i}(\ah_2)$. The bi-homogeneous component $\cF_{0,d}$
(resp. $\cF_{d,0}$) can be identified with the restriction $\cF\vert_{\ah_2^*}$ (resp. $\cF\vert_{\ah_1^*}$).
Let $\ca_i$ denote the restriction of $\gS(\q)^\q$ to $\ah_i^*$. 

\begin{thm}[{cf.~\cite[Theorem\,2.2]{S}} ]     
\label{thm:aks1}
For $i=1,2$, the subalgebra $\ca_i\subset \gS(\ah_i)$ is Poisson-commutative 
\wrt\ the Lie--Poisson bracket on $\ah_i^*$.
\end{thm}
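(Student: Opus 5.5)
We plan to deduce the statement from the Corollary to Proposition~\ref{prop:Z-x}: the bi-homogeneous components of all symmetric invariants of $\q$ Poisson-commute with respect to every bracket $\{\,\,,\,\}_t$, $t\in\BP$. Apply this to the splitting $\q=\ah_1\oplus\ah_2$ with $\h=\ah_1$, $\rr=\ah_2$. For $\cF\in\gS^d(\q)^\q$, the extreme component $\cF_{d,0}$ lies in $\gS(\ah_1)$ and is identified with $\cF\vert_{\ah_1^*}$. Hence $\ca_1$ is spanned by such extreme components, and these commute with respect to $\{\,\,,\,\}_\infty$.

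The key step is to compare $\{\,\,,\,\}_\infty$ restricted to $\gS(\ah_1)$ with the Lie--Poisson bracket of $\ah_1$. By \eqref{eq:2-brak}, $\{x,y\}_\infty=0$ for $x,y\in\ah_1=\h$, so $\{\,\,,\,\}_\infty$ is useless there. We therefore use $\{\,\,,\,\}_0$ instead: for $x,y\in\h$ one has $\{x,y\}_0=[x,y]$, so $\gS(\h)$ is a Poisson subalgebra of $(\gS(\q),\{\,\,,\,\}_0)$, and on it $\{\,\,,\,\}_0$ is precisely the Lie--Poisson bracket of $\ah_1$. Since the corollary covers $t=0$, any two elements $\cF_{d,0},\mathcal G_{e,0}$ satisfy $\{\cF_{d,0},\mathcal G_{e,0}\}_0=0$, and this is exactly their Lie--Poisson bracket in $\gS(\ah_1)$. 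By the symmetric argument with the roles of $\ah_1$ and $\ah_2$ exchanged (using $\{\,\,,\,\}_\infty$, which restricts to the Lie--Poisson bracket on $\gS(\ah_2)$), $\ca_2$ is Poisson-commutative.

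The only delicate point is that the corollary is asserted for the whole family including $t=0,\infty$, while Proposition~\ref{prop:Z-x} concerns $\gZ_\times$, generated by the centres with $t\ne0,\infty$. All elements of $\gZ_\times$ commute for every $\{\,\,,\,\}_t$ with $t\ne0,\infty$, i.e. $\{A,B\}_0+t\{A,B\}_\infty=0$ for infinitely many $t$. This forces $\{A,B\}_0=\{A,B\}_\infty=0$, so the case $t=0$ is legitimate even if $\ah_1,\ah_2$ are not spherical. With this justification in place, no further computation is needed.
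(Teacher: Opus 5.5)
Your proposal is correct and is essentially the paper's proof. You apply the corollary of Proposition~\ref{prop:Z-x} at $t=0$ (resp.\ $t=\infty$), and use that $\ah_1$ (resp.\ $\ah_2$) is a Lie subalgebra of $\q_{(0)}$ (resp.\ $\q_{(\infty)}$), so the contracted bracket restricts on $\gS(\ah_1)$ (resp.\ $\gS(\ah_2)$) to its Lie--Poisson bracket. Your linearity-in-$t$ justification for including $t=0,\infty$ is sound, though it is redundant: the Lenard--Magri fact recalled in Section~\ref{subs:LM} already gives commutativity with respect to every bracket in the family.
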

\begin{proof}
By our general approach, we have the In\"on\"u--Wigner contractions $\q_{(0)}=\ah_1\ltimes\ah_2^{\sf ab}$ and $\q_{(\infty)}=\ah_2\ltimes\ah_1^{\sf ab}$ together with their Lie--Poisson brackets
$\{\ ,\,\}_0$ and $\{\ ,\,\}_\infty$. By Proposition~\ref{prop:Z-x}, the bi-homogeneous components of all
$\cF\in\gS(\q)^\q$ commute \wrt\ all brackets in the family $\{\ ,\,\}_0+t\{\ ,\,\}_\infty$ ($t\in\BP$).
In particular, being bi-homogeneous components, the elements of $\ca_1$ commute \wrt\ $\{\ ,\,\}_0$.
Since $\ca_1\subset \gS(\ah_1)$ and $\ah_1$ is a Lie subalgebra of $\q_{(0)}$, we conclude that
the elements of $\ca_1$ commute \wrt\ the Lie--Poisson bracket on $\ah_1^*$.
The argument for $\ca_2$ is similar.
\end{proof}

A slightly different result occurs for the involutions of a reductive Lie algebra $\g$. Let $\sigma$ be an
involution of $\g$ and $\g=\g_0\oplus\g_1$. Then there is a parabolic subalgebra $\p\subset\g$ such that
$\g=\g_0\oplus\h$ for some $\h$ between $[\p,\p]$ and $\p$. More precisely, if $\ce\subset\g_1$ is a
Cartan subspace, then $\g^\ce$ is a Levi subalgebra of $\p$. (Hence $\p=\be$ if and only if $\sigma$ is
$S$-regular.) Actually, the sum $\g=\g_0\oplus\h$ is the complexification of the Iwasawa decomposition 
for the real form of $\g$ corresponding to $\sigma$ (over $\bbk=\BC$).
One has the linear isomorphism $\psi: \g_1\to \h^*$ such that $\psi(x_1)(y)=(x_1,y)$ for $x_1\in \g_1$ and
$y\in\h$. For $f\in \bbk[\g_1]^{G_0}$, we define $H_f\in \gS(\h)$ by the formula $H_f(\xi)=f(\psi^{-1}(\xi))$
for $\xi\in\h^*$.
\begin{thm}[{cf.~\cite[Sect.\,3.1]{GW}}]    \label{thm:aks2}
$\{H_f \mid f\in \bbk[\g_1]^{G_0}\}$ is a\/ \PC\ subalgebra of\/ $\gS(\h)$. 
\end{thm}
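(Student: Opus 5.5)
The plan is to apply Proposition~\ref{prop:Z-x} to the splitting $\g=\g_0\oplus\h$, as in the proof of Theorem~\ref{thm:aks1}. The difference from Theorem~\ref{thm:aks1} is that $\h^*$ is now identified with $\g_1$ via $\psi$, and not with $\Ann(\g_0)=\g_0^\perp=\g_1$ through the splitting directly. Since $(\g_0,\g_1)=0$, the restriction of $(\ ,\,)$ identifies $\g_1$ with $(\g/\g_0)^*=\Ann(\g_0)$. So the question is how $\psi$ relates to this.

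First I record the setup. The splitting $\g=\g_0\oplus\h$ gives $\g^*=\g_0^*\oplus\h^*$ with $\h^*=\Ann(\g_0)$, and under $\g\simeq\g^*$ we have $\Ann(\g_0)=\g_0^\perp=\g_1$. For $x_1\in\g_1$, the functional $(x_1,\cdot)$ on $\g$ vanishes on $\g_0$, so its restriction to $\h$ is exactly $\psi(x_1)$. Hence $\psi$ is just the identification $\g_1=\Ann(\g_0)\simeq\h^*$ coming from the splitting.

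Next I take $F\in\gS(\g)^\g=\bbk[\g]^G$. Its bi-homogeneous component of $\g_0$-degree zero, $F_{0,d}\in\gS(\h)$, is the restriction $F\vert_{\Ann(\g_0)}$, i.e.\ $F\vert_{\g_1}$ viewed through $\psi$. So $F_{0,d}=H_{F|_{\g_1}}$. By the Chevalley-type restriction theorem for symmetric pairs~\cite{kr71}, the map $\bbk[\g]^G\to\bbk[\g_1]^{G_0}$ is onto. Therefore $\{H_f\}$ is exactly the set of components $F_{0,d}$, and it is closed under products and sums, so it is a subalgebra.

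Finally, by Proposition~\ref{prop:Z-x} and its corollary, these components Poisson-commute with respect to $\{\ ,\,\}_\infty$, the bracket of $\g_{(\infty)}=\h\ltimes\g_0^{\sf ab}$. Restricted to $\gS(\h)$, this is the Lie--Poisson bracket of $\h$, since $\h$ is a subalgebra of $\g_{(\infty)}$ by~\eqref{eq:2-brak}. This gives Poisson-commutativity in $\gS(\h)$. The main point to check carefully is the surjectivity of restriction, i.e.\ that $\{H_f\}$ coincides with the restrictions; if one only needs commutativity, it suffices that every $f$ is a restriction, which is the Kostant--Rallis result.
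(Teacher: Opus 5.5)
Your identification of $\psi$ with the splitting isomorphism $\Ann(\g_0)=\g_1\simeq\h^*$ is correct, as are the equality $F_{0,d}=H_{F\vert_{\g_1}}$ and the appeal to Proposition~\ref{prop:Z-x}. This is the content of Theorem~\ref{thm:aks1} and of the first half of the paper's proof.

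The gap is your claim that the restriction $p\colon\bbk[\g]^G\to\bbk[\g_1]^{G_0}$ is onto. Kostant--Rallis prove the Chevalley-type isomorphism $\bbk[\g_1]^{G_0}\simeq\bbk[\ce]^{W_\ce}$ for the little Weyl group. They do not prove surjectivity of $p$, and it is false in general: it fails for four exceptional symmetric pairs (Helgason). For instance, for $(\eus E_6,\eus F_4)$ one has $W_\ce\simeq W(\eus A_2)$, so $\bbk[\g_1]^{G_0}$ has a basic invariant of degree $3$. The basic invariants of $\eus E_6$ have degrees $2,5,6,8,9,12$, so nothing of degree $3$ lies in the image of $p$. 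In such cases your argument only shows that $\{H_f\mid f\in p(\bbk[\g]^G)\}$ is \PC. That is a proper subalgebra of the one in the statement.

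What is missing is the passage from $p(\bbk[\g]^G)$ to all of $\bbk[\g_1]^{G_0}$, which the paper supplies in two steps.

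First, the extension $p(\bbk[\g]^G)\subset\bbk[\g_1]^{G_0}$ is finite. The paper quotes this from \cite[Prop.\,7.1]{py23}. It can also be seen directly: take a Cartan subalgebra $\te\supset\ce$. Restriction to $\ce$ is injective on $\bbk[\g_1]^{G_0}$, because $G_0{\cdot}\ce$ is dense in $\g_1$. Moreover, $\bbk[\ce]$ is integral over $\bbk[\te]^W\vert_\ce=p(\bbk[\g]^G)\vert_\ce$, since $\bbk[\te]$ is finite over $\bbk[\te]^W$.

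Second, an algebraic extension of a \PC\ subalgebra $\ca$ of the domain $\gS(\h)$ is again \PC. Suppose $a$ satisfies a minimal relation $P(a)=0$ over $\ca$ and $b\in\ca$. Applying the derivation $\{\cdot,b\}$ gives $P'(a)\{a,b\}=0$. Since $P'(a)\ne 0$ in characteristic zero, $\{a,b\}=0$. Adjoining algebraic elements one at a time then shows that any two elements algebraic over $\ca$ commute.

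Since $f\mapsto H_f$ is an algebra isomorphism $\bbk[\g_1]\to\gS(\h)$, these two facts complete your argument for every involution.
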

\begin{proof}   Consider the restriction $p: \bbk[\g]^G\to \bbk[\g_1]^{G_0}$, $\cF\mapsto \cF\vert_{\g_1}$.
If $f=p(\cF)$ and $\deg\cF=d$, then $H_f$ is nothing but the bi-homogeneous component 
$\cF_{0,d}$ \wrt\ the sum $\g=\g_0\oplus\h$. Therefore, Theorem~\ref{thm:aks1} shows that
$\{H_f \mid f\in p(\bbk[\g]^{G})\}$ is a\/ \PC\ subalgebra of\/ $\gS(\h)$. By~\cite[Prop.\,7.1]{py23}, the
extension $p(\bbk[\g]^{G})\subset \bbk[\g_1]^{G_0}$ is finite. It is also easily seen that if 
$\ca\subset \gS(\h)$ is \PC\ and $\ca\subset\tilde\ca$ is an algebraic extension, then 
$\tilde\ca\subset\gS(\h)$ is \PC\ as well.
\end{proof}

\subsection{Some open problems and directions for further research}
\label{subs:further}
We already know many non-degenerate splittings $\g=\h\oplus\rr$ such that $\gZ_{\lg\h,\rr\rg}$ is a
(large) polynomial ring. Furthermore, in each such good case, a Hilbert basis of $\gZ_{\lg\h,\rr\rg}$ is 
explicitly described.

\begin{qtn}
When is it true that $\gZ_{\lg\h,\rr\rg}$ is also a maximal \PC\ subalgebra of $\gS(\g)$ with respect to inclusion? 
\end{qtn}
So far, the maximality is proved for $\gZ_{\lg\be,\ut_-\rg}$ (any $\g$) and $\gZ_{\lg\g_0,\be\rg}$ ($\g=\sln$), see
Introduction.

\begin{prob}
In the good cases, the inclusion $\gZ_{\lg\h,\rr\rg}\subset \gS(\g)$ yields the morphism
\[
    \tau: \g^*\to \spe\gZ_{\lg\h,\rr\rg} \simeq \BA^{\bb(\g)}.
\]
Prove that $\tau$ is flat, i.e., a (any) Hilbert basis of\/ $\gZ_{\lg\h,\rr\rg}$ forms a regular sequence in 
$\gS(\g)$.
\end{prob}
More generally, if $\gZ$ is a large polynomial \PC\ subalgebra, then the same problem applies to
$\tau:\g^*\to \spe \gZ\simeq \BA^{\bb(\g)}$. For the Mishchenko--Fomenko algebras $\gZ_\gamma$
with $\gamma\in\g^*_{\sf reg}$, an affirmative answer is given by Moreau~\cite{Ann}.

\begin{prob}
Quantise large \PC\ subalgebras of the form  $\gZ_{\lg\h,\rr\rg}$, i.e., construct commutative subalgebras
$\widetilde\gZ_{\lg\h,\rr\rg}$ of the enveloping algebra $\eus U(\g)$ such that 
$\gr\widetilde\gZ_{\lg\h,\rr\rg}=\gZ_{\lg\h,\rr\rg}$.
\end{prob}
A quantisation of $\gZ_{\langle\be,\ut_-\rangle}$ is recently obtained in \cite{flower}, whereas the Mishchenko--Fomenko algebras $\gZ_\gamma$ are quantised by Rybnikov~\cite{Ry}.

\end{document}